\newtheorem{Theorem}[equation]{Th\'eor\`eme}
\newtheorem{Corollary}[equation]{Corollaire}
\newtheorem{Proposition}[equation]{Proposition}
\newtheorem{Lemma}[equation]{Lemme}
\newtheorem{Remark}[equation]{Remarque}
\newtheorem{Definition}[equation]{D\'efinition}
\newtheorem{Notation}[equation]{Notation}
\def\Section#1{\section{#1}\setcounter{equation}{0}}
\newenvironment{resume}{%
\begin{abstract}
}{\end{abstract}}
\newenvironment{biblio}{%
}
\font\smc=cmcsc10
\def\bdots{\mathinner{\mkern1mu\raise1pt\hbox{.}\mkern2mu\raise4pt\hbox{.}
\mkern2mu\raise7pt\vbox{\kern7pt\hbox{.}}\mkern1mu}}
\def\oF{{\mathfrak o}_F}
\def\pF{{\mathfrak p}_F}
\def\oE{{{\mathfrak o}_E}}
\def\pE{{\mathfrak p}_E}
\def\fA{{\mathfrak A}}
\def\vF{{\nu_F}}
\def\End{{\hbox{\rm End}}}
\def\Hom{{\hbox{\rm Hom}\,}}
\def\max{\hbox{\rm max}\,}
\def\min{\hbox{\rm min}\,}
\def\dim{\hbox{\rm dim}\,}
\def\ker{\hbox{\rm ker}\,}
\def\det{\hbox{\rm det}\,}
\def\cInd{c{\hbox{\rm-Ind}}}
\def\Lie{\hbox{\rm Lie}\,}
\def\vol{\hbox{\rm vol}\,}
\DeclareMathOperator{\GL}{GL}
\DeclareMathOperator{\SL}{SL}
\def\su{{\mathfrak {su}}}
\def\mfu{{\mathfrak u}}
\def\msl{{\mathfrak {sl}}}
\DeclareMathOperator{\lg2}{{\mathfrak g}_{2}}
\DeclareMathOperator{\SO}{SO}
\def\O{{\mathrm O}}
\DeclareMathOperator{\SU}{SU}
\DeclareMathOperator{\U}{U}
\DeclareMathOperator{\G2}{G_2}
\DeclareMathOperator{\Spin}{Spin}
\def\tr{{\hbox{\rm tr}}}
\def\CC{{\mathcal C}}
\def\BZ{{\mathbb Z}}
\def\b{{\beta}}
\def\L{{\Lambda}}
\def\tJ{{\tilde J}}
\def\tP{{\tilde P}}
\def\oG{{\bar G}}
\def\oH{{\bar H}}
\def\oJ{{\bar J}}
\def\oP{{\bar P}}
 \def\bs{\mathrm}
\def\un{{\mathit 1}}
\def\today{\number\day\space
 \ifcase\month\or
 janvier\or f\'evrier\or mars\or avril\or mai\or juin\or
 juillet\or ao\^ut\or septembre\or octobre\or novembre\or d\'ecembre\fi
\space\number\year}
\begin{document}

\title{Caract\`eres semi-simples  de $\G2(F)$, \\  $F$ corps local non archim\'edien}

\author{Laure Blasco et Corinne Blondel
}
\date{\today}
\maketitle
%
%
%
\begin{resume}
On d\'efinit   strates
 semi-simples,   caract\`eres semi-simples et types semi-simples de $\G2(F)$. 
 A partir de certains de ces types, on construit des repr\'esentations supercuspidales de ce groupe.  
 
 {\textit{Mathematics Subject Classification (2000):} 22E50}

\end{resume}

\markboth{\smc L. Blasco et C. Blondel}{Caract\`eres semi-simples de $\G2(F)$}

\section*{Introduction}\label{introduction}

Gr\^ace aux travaux de C. J.  Bushnell et P. C. Kutzko, nous connaissons une description de nature arithm\'etique des repr\'esentations complexes irr\'eductibles supercuspidales des groupes lin\'eaires d\'efinis sur un corps local non archim\'edien $F$ \cite{BK1}.  
Plus r\'ecemment, S. Stevens a d\'ecrit de mani\`ere semblable les repr\'esentations  complexes irr\'eductibles supercuspidales des groupes classiques en supposant que la caract\'eristique r\'esiduelle de $F$ est diff\'erente de 2  \cite{S5}. 
Ces des\-criptions reposent sur la notion de {\it caract\`ere simple} ou {\it semi-simple}, selon le groupe consid\'er\'e, et s'effectuent principalement en trois \'etapes~: la d\'efinition des caract\`eres simples ou semi-simples et l'\'etude de leurs propri\'et\'es~; la cons\-truction de repr\'esentations irr\'eductibles du normalisateur d'un caract\`ere simple ou semi-simple contenant ce caract\`ere~; la d\'emonstration que toute repr\'esentation irr\'eductible supercus\-pidale s'obtient par induction compacte (modulo le centre) \`a partir d'une des repr\'esentations pr\'ec\'edentes.\\
L'int\'er\^et de telles descriptions est au moins double. D'une part, elles sont \'etablies par des arguments alg\'ebriques et se pr\^etent donc \`a des constructions dans le cadre plus large des repr\'esentations \`a coefficients dans un anneau o\`u  la caract\'eristique r\'esiduelle de $F$ est inversible. C'est ainsi que les carac\-t\`eres semi-simples des groupes lin\'eaires ou classiques interviennent de mani\`ere cruciale dans la d\'emonstration  du deuxi\`eme th\'eor\`eme d'adjonction con\c cue par J.-F. Dat \cite{D}. D'autre part, ces descriptions en paral\-l\`eleÊ sur diff\'erents groupes offrent l'espoir d'obtenir une bonne notion de transfert de carac\-t\`eres semi-simples qui aiderait \`a d\'ecrire certaines fonctorialit\'es.

Dans cet article, nous reprenons cette d\'emarche afin de d\'ecrire les repr\'esentations complexes supercuspidales du groupe exceptionnel $\G2(F)$. Nous r\'ealisons les deux premi\`eres \'etapes et  obtenons une s\'erie particuli\`ere de repr\'esentations irr\'eductibles de sous-groupes ouverts compacts, les {\it types semi-simples de $\G2(F)$}.
Nous concluons par l'\'enonc\'e de conditions suf\-fisantes portant sur les types semi-simples pour que leurs induites compactes \`a $\G2(F)$ soient des repr\'esentations supercuspidales (th\'eor\`eme \ref{cuspidales}). Nous esp\'erons montrer, 
dans un article ult\'erieur,  que toutes les repr\'esentations irr\'eductibles supercuspidales de $\G2(F)$ sont bien de cette forme. \\
Notons que K.-C. Yu a d\'ej\`a donn\'e une construction g\'en\'erale de repr\'esentations supercus\-pidales des groupes r\'eductifs $p$-adiques \cite{Yu} dont  Ju-Lee Kim a montr\'e l'exhaustivit\'e \cite{Kim} par des arguments analytiques. L'ensemble n\'ecessite que la caract\'eristique r\'esiduelle $p$ de $F$ soit suffisamment grande. Ici, nous supposons simplement qu'elle est diff\'erente de 2 et 3.

Le groupe $\G2(F)$ est le groupe des automorphismes d'une $F$-alg\`ebre d'octonions $V$ munie de sa norme et s'identifie ainsi \`a un sous-groupe du groupe orthogonal d\'eploy\'e $\SO_{F}(V)$. Il est aussi le groupe des points fixes de $\Spin_{F}(V)$ sous l'action d'un groupe d'automorphismes d'ordre $6$ dit groupe de trialit\'e. L'action du groupe de trialit\'e n'est pas d\'efinie sur $\SO_{F}(V)$ mais peut l'\^etre sur ses pro-$p$-sous-groupes. 
L'id\'ee est alors de construire les caract\`eres semi-simples de $\G2(F)$ \`a partir de ceux de $\SO_{F}(V)$ \`a l'aide d'une correspondance de Glauberman pour le groupe de trialit\'e (th\'eor\`eme \ref{carfixes}), tout comme S. Stevens a construit les ca\-rac\-t\`eres semi-simples des groupes classiques \`a partir de ceux du groupe lin\'eaire en utili\-sant l'automorphisme d'adjonction \cite{S4}. D\`es cette \'etape,
l'exclusion des carac\-t\'eristiques r\'esiduelles 2 et 3 s'impose. \\
La deuxi\`eme \'etape se d\'eroule parall\`element \`a \cite[\S\S  3.2 \`a 4.1]{S5}. Les quotients r\'eductifs intervenant sont tous des groupes classiques d\'efinis sur le corps r\'esiduel et nous utilisons encore une fois les r\'esultats de \cite{S5}. 
Notons tout de m\^eme que les calculs d'entrelacement exigent de nouvelles m\'ethodes puisqu'il n'existe pas pour $\G2 (F)$ une transform\'ee de Cayley aux bonnes propri\'et\'es (remarque \ref{Moy}).

Ce sch\'ema simple n\'ecessite un grand nombre de pr\'eparatifs et d\'etours que nous pr\'esentons maintenant avec plus de d\'etails dans le plan de l'article.

      La premi\`ere partie d\'efinit et \'etudie les  {\it strates semi-simples $[\Lambda, n, 0 ,\beta]$ de $\lg2 (F)$,} l'alg\`ebre de Lie de $\G2(F)$. Ce sont les strates semi-simples de $\End_F(V)$ dont la suite deÊ r\'eseaux $\Lambda$ cor\-res\-pond \`aÊ unÊ point de l'immeuble de $G_2(F)$ et l'\'el\'ementÊ $\beta$ appartient \`a $\lg2 (F)$. Ce dernier est semi-simple et une d\'erivation sur $V$ donc son noyau $V^0$ est une sous-alg\`ebre de composition de dimension paire (\ref{strates.2}, \ref{composition}). On dispose alors d'une d\'ecomposition de $V$ en somme directe de $V^0$ et de son orthogonal $W$ et, lorsque  $V^0$ est  d\'eploy\'ee de dimension 2, d'une polarisation compl\`ete de $W$, $W=W^+ \oplus W^-$. La d\'ecomposition de $V$ ainsi obtenue est stable par $\beta$ et scinde la strate $[\Lambda, n, 0 ,\beta]$ (\S \ref{strates.2}). Elle gouverne toute l'\'etude. \\ 
 Bien \'evidemment, lorsque la strate est nulle, cette d\'ecomposition est triviale. Ce cas corres\-pond au niveau z\'ero d\'ej\`a \'etudi\'e par L. Morris \cite{M2} et est oubli\'e, ou peu s'en faut, jusqu'au dernier paragraphe (\S \ref{types.cuspidales}). Supposons donc la strate $[\Lambda, n, 0 ,\beta]$ non nulle. Sa ``restriction  \`a $W$ ou $W^+$'' (ici confondus sous le nom $W'$), c'est-\`a-dire la strate $[\Lambda \cap W', n, 0, \beta_{W'}]$ o\`u $\beta_{W'}$ est la restriction de $\beta$ \`a $W'$, est une strate semi-simple  de l'alg\`ebre de Lie d'un sous-groupe $\bar L$ de $G_{2}(F)$ stabilisant $W'$. Celui-ci est isomorphe \`a $\SL(3,F)$ si $V^0$ est d\'eploy\'ee de dimension 2,  $\SU(2,1)(F)$ si $V^0$ est anisotrope de dimension 2 et $\SO(W)$ si $V^0$ est de dimension 4.  De plus, le centralisateur de $\beta$ dans $\G2(F)$ s'identifie au centralisateur de $\beta_{W'}$ dans $\bar L$ (\S  \ref{strates.3}).\\
  La classification des strates semi-simples de $\lg2(F)$ consiste alors \`a \'etudier cette  ``application de restriction \`a $W'$''. Le point crucial est la construction de suites de r\'eseaux de $V$ cor\-res\-pon\-dant \`a un point de l'immeuble de $\G2(F)$ dont  les suites de r\'eseaux de $W'$, cor\-res\-pon\-dant  \`a un point de l'immeuble de $\bar L$, sont facteurs directs (\S  \ref{strates.normes}). Mais ici, le langage  appropri\'e est celui des {\it normes} : normes de volume nul dans le cas de 
 $\SL(3,F)$ \cite{BT0}, normes autoduales dans celui d'un groupe classique \cite{BT} et normes autoduales d'alg\`ebre dans celui de $\G2(F)$ \cite{GY}. On obtient ainsi un plongement canonique de l'immeuble de $\bar L$ dans celui de $\G2(F)$ (propositions \ref{volume0sl}, \ref{volume0su}, \ref{dimension4}).  On conclut sur une classification compl\`ete des strates semi-simples de $\lg2(F)$ et un proc\'ed\'e d'approximation de telles strates (\S \ref{strates.4}).  
 
  Suit la construction des caract\`eres semi-simples de $\G2(F)$ associ\'es \`a la strate $[\Lambda, n,0,\beta]$ qui ne s'ach\`eve qu'au paragraphe \ref{par3}. Afin d'utiliser une correspondance de Glauberman, nous devons au pr\'ealable \'etudier l'action du groupe de trialit\'e $\Gamma$ sur les caract\`eres de $\SO(V)$ associ\'e \`a cette strate (\S \ref{par2}). Notons d'abord que, la strate $[\Lambda, n,0,\beta]$ de $\G2 (F)$ \'etant une strate semi-simple autoduale de $\SO(V)$ (\S \ref{strates.2}), la d\'efinition des caract\`eres semi-simples de $\SO(V)$ doit \^etre \'elargie comme dans \cite{D}. A plusieurs reprises dans l'article, on \'etend les r\'esultats de \cite{S5} \`a ces strates et leurs caract\`eres semi-simples associ\'es.\\
  Ceci dit, l'action du groupe de trialit\'e est complexe et son \'etude occupe tout le paragraphe \ref{par2}.  Une premi\`ere raison est que l'action de $\Gamma$ ne se refl\`ete point sur l'espace $V$ et il n'est plus clair que les filtrations de l'ordre ${\mathfrak A}(\Lambda)$ et du sous-groupe parahorique $P(\Lambda)$ soient stables sous cette action. Une deuxi\`eme raison est que la transform\'ee de Cayley ne commute aux actions de $\Gamma$ que ``localement''~:  la description des caract\`eres des quotients des filtrations des sous-groupes parahoriques \`a l'aide d'\'el\'ements de $\lg2(F)$ n'est pas imm\'ediate, ni le fait que les caract\`eres obtenus \`a partir d'un \'el\'ement de $\lg2(F)$ soient fixes sous l'action de $\Gamma$. Ces propri\'et\'es sont \'etablies aux paragraphes \ref{par22} et \ref{par225}. Elles assurent que les sous-groupes $H^1(\beta,\Lambda)$ et $J^1(\beta,\Lambda)$ associ\'es \`a la strate $[\Lambda,n,0,\beta]$ sont stables sous l'action de $\Gamma$ (lemme \ref{ex2.1}) et que le caract\`ere $\psi_{\beta}$ de $P^{[\frac{n}{2}]+1}(\Lambda)$ est fixe par trialit\'e. Ceci est le premier pas vers une caract\'erisation des caract\`eres semi-simples de $\SO(V)$ associ\'es \`a la strate $[\Lambda, n, 0,\beta]$ qui restent semi-simples sous l'action de $\Gamma$. Les pas suivants exigent, en outre, une \'etude de l'action de la trialit\'e sur le centralisateur de $\beta$ (\S \ref{par24}). On d\'emontre alors que les seuls caract\`eres semi-simples de $SO(V)$ associ\'es \`a $[\Lambda, n,0,\beta]$  qui restent semi-simples sous l'action de $\Gamma$ sont ceux qui sont fixes sous cette action (th\'eor\`eme \ref{carfixes}). On les nomme {\it caract\`eres semi-simples sp\'eciaux de $\SO(V)$ associ\'es \`a $[\Lambda, n,0,\beta]$}.

On est maintenant en mesure de d\'efinir les {\it caract\`eres semi-simples de $\G2(F)$} associ\'es \`a $[\Lambda, n,0,\beta]$ comme l'image par la correspondance de Glauberman pour le groupe $\Gamma$ des carac\-t\`eres semi-simples sp\'eciaux de $\SO(V)$ associ\'es \`a cette m\^eme strate. Il s'agit simplement des restrictions \`a $\oH^1(\beta,\Lambda):=H^1(\beta,\Lambda)\cap \G2(F)$ des caract\`eres semi-simples sp\'eciaux de $H^1(\beta,\Lambda)$.

Dans le paragraphe \ref{par3}, on aborde la deuxi\`eme partie de la construction des types semi-simples dont le d\'eroulement est parall\`ele au cas classique.  Les caract\`eres semi-simples de $\G2 (F)$ associ\'es \`a la strate $[\Lambda, n,0,\beta]$ jouissent des m\^emes propri\'et\'es que ceux de $\SO(V)$ et admettent une extension de Heisenberg $\bar \eta$ \`a $\oJ^1(\beta,\Lambda):=J^1(\beta,\Lambda)\cap \G2(F)$(\S \ref{par31}). A leur tour, les repr\'esentations de Heisenberg obtenues poss\`edent la  propri\'et\'e cruciale ``d'entrelacement simple''~:  la dimension des espaces d'entrelacements est toujours \'egale \`a 0 ou 1. Pour l'\'etablir, on traduit cette propri\'et\'e en termes d'\'egalit\'e entre certains sous-groupes de $\oJ^1(\beta,\Lambda)$ et de ses analogues dans $\GL(V)$ et $\SO(V)$  (proposition \ref{essentiel}) ce qui permet   de ``descendre'' les r\'esultats de $\GL(V)$ \`a $\G2(F)$ en consid\'erant les invariants sous l'action de l'adjonction puis sous celle du groupe de trialit\'e (\S \ref{par33}).\\
Il ne reste plus qu'\`a terminer la construction en \'etendant la repr\'esentation $\bar \eta$ au groupe $\oJ(\beta,\Lambda):=\oP_{\beta}(\Lambda)\oJ^1(\beta,\Lambda)$ o\`u $\oP_{\beta}(\Lambda) = P(\Lambda)\cap \oG_{\beta}$. Ceci est r\'ealis\'e au dernier paragraphe. Gr\^ace aux propri\'et\'es obtenues sur $\bar \eta$ et sur l'immeuble de $\G2(F)$ (en particulier le lemme \ref{lemme28}) et en ajoutant que le quotient $\oJ(\beta,\Lambda)/\oJ^1(\beta,\Lambda)$ est un groupe classique sur le corps r\'esiduel, la m\^eme m\'ethode que \cite{S5} conduit \`a la d\'efinition des types semi-simples de $\G2(F)$ (\S \ref{types.cuspidales}). On conclut en donnant des conditions suffisantes sur ces types semi-simples pour que leurs induites \`a $\G2(F)$ soient cuspidales (th\'eor\`eme \ref{cuspidales}).
 
 \bigskip

Cette \'etude a d\'emarr\'e \`a la suite d'un groupe de travail sur $\G2$ en 2003/2005 dont nous remercions les participants, en particulier Fran\c cois Sauvageot qui r\'ealisa \`a cette occasion une \'etude sur les tores qui, bien qu'invisible dans ce qui suit, en a inspir\'e le contenu. 
 Nous esp\'erons qu'une partie de la jubilation de ce groupe de travail et du plaisir que nous avons eu \`a mener \`a son terme la pr\'esente \'etude transpara\^it dans ce qui suit.     

\Section{Strates semi-simples de $\lg2(F)$}\label{strates}
\subsection{D\'efinitions et notations relatives \`a $\G2$}\label{strates.1}

Soit $F$ un corps local non archim\'edien de caract\'eristique r\'esiduelle $p$
diff\'erente de $2$ et $3$.
Soit $V$ l'alg\`ebre des octonions sur $F$~; on notera $\un$ son unit\'e, $Q$ sa norme, 
qui est multiplicative :  $   Q(xy)= Q(x)Q(y) \  (x,y \in V)$,  et
$f$ la forme bilin\'eaire associ\'ee, de sorte que
$ f(x,x)= 2 Q(x) \  (x  \in V)$.  
L'anti-automorphisme d'adjonction de $\mathfrak{gl}_F(V)$ sera not\'e
$X \mapsto \sigma( X)$ et l'automorphisme correspondant de $\GL_F(V)$
sera not\'e $\tau$ : $\tau(g)={\sigma( g)}^{-1}$.

Les propri\'et\'es de l'alg\`ebre d'octonions sont d\'ecrites par exemple dans \cite{SV}, nous rappelons simplement ici les points  essentiels pour fixer les notations. Le conjugu\'e d'un octonion $x$ est
$\bar x = f(x,\un) \un - x$, sa trace est $\tr \, x = x + \bar x$, sa norme est telle que
$Q(x) \un = x \bar x = \bar x x $, et l'on a pour $x, y, z \in V$ : $\overline{xy} = \bar y \bar x$ et
$f(xy,z)=f(y, \bar x z)= f(x, z \bar y)$.

On consid\'erera toujours $\G2(F)$, not\'e $\oG $, comme le sous-groupe de
$\GL_F(V)= \widetilde G$ form\'e des automorphismes d'alg\`ebre de $V$.
C'est aussi un sous-groupe du groupe $\SO_F(V)$, not\'e $G$, form\'e des \'el\'ements de d\'eterminant $1$  du groupe  $\O_F(V)=\GL_F(V)^\tau$ des isom\'etries de la forme quadratique $Q$. On notera
$\mathfrak g_2(F)$ l'alg\`ebre de Lie de $\G2(F)$, sous-alg\`ebre de Lie de $\mathfrak{s  o}_F(V)$ et de $\mathfrak{gl}_F(V)$.
 
\subsection{D\'efinitions et notations relatives aux strates}\label{strates.2}

Soit $[\L, n, r, \beta]$ une strate de $\End_F(V)$ \cite[\S 3.1]{BK2}. Supposons 
$\beta \in \End_F(V)$  
  semi-simple : le polyn\^ome minimal de $\beta$ est un  produit $\prod_{i=0}^l
\Psi_i$ de  polyn\^omes irr\'eductibles sur $F$ deux \`a deux premiers entre eux. Posons $V^i = \ker \Psi_i(\beta)$.
Cela d\'efinit une   d\'ecomposition de $V$ en somme directe
$V = \oplus_{i=0}^l V^i$, unique \`a l'ordre pr\`es, telle que
$\beta=\sum_{i=0}^l \beta_i$ o\`u $\beta_i$ est la restriction de
$\beta$ \`a $V^i$.

Par d\'efinition  \cite[Definition 3.2]{S4}, la  strate  $[\L, n, r, \beta]$ est   {\it semi-simple} dans $\End_F(V)$  si 
\begin{itemize}
\item
  $\L = \oplus_{i=0}^l \L^i \ $ o\`u $ \ \L^i(t) = \L(t)\cap V^i \ $  ($t \in \mathbb Z$) ;  
  \item  pour $0 \le i \le l$ la strate $[\L^i, n_i, r, \beta_i]$ est simple ou nulle, avec 
  $n_i=r$ si $\b_i =0$, $n_i = - v_{\L^i}(\b_i)$ sinon ;
  \item   pour $0 \le i , j \le l$, $i \ne j$,  la strate $[\L^i\oplus \L^j, \max\{n_i,n_j\}, r, \beta_i+ \beta_j]$ n'est pas \'equivalente \`a une strate simple ou nulle.
    \end{itemize}

On dit qu'une strate $[\L, n, r, \beta]$ est une strate de $\mathfrak{so}_F(V)$, ou   {\it strate  autoduale},   si la suite de r\'eseaux  $\L$ correspond \`a un point  rationnel de l'immeuble de $\SO_F(V)$ (voir \cite{BS} et \S \ref{strates.normes})
et si $\beta$ appartient \`a $\mathfrak{so}_F(V)$.  Soit alors  $[\L, n, r, \beta]$  une    {\it strate  semi-simple autoduale}. 
Le   polyn\^ome minimal de $\beta $ est alors pair et, quitte \`a renum\'eroter, 
on peut   r\'epartir les polyn\^omes irr\'eductibles $\Psi_i$ en deux sous-ensembles 
v\'erifiant~:
\begin{itemize}
    \item pour $0\le i\le s$ le polyn\^ome $\Psi_i$ est pair.
    \item pour $1\le j \le k$ on a l'\'egalit\'e $\Psi_{s + 2j-1}(-X)= \Psi_{s + 2j}(X)$.
\end{itemize}
Chaque sous-espace  $V^i$ pour $i \le s$ est non d\'eg\'en\'er\'e  et orthogonal \`a tous les autres.
Pour $1\le j \le k$  les sous-espaces
$V^{s + 2j-1} $ et $V^{s + 2j} $ sont totalement isotropes en dualit\'e et orthogonaux aux autres, ce qui nous donne la d\'ecomposition
\begin{equation}\label{decomposition}
 V = \left[\perp_{i=0}^s V^i \right] \perp
\left[\perp_{j=1}^k ( V^{s + 2j-1} \oplus V^{s + 2j})\right].
\end{equation}
Rappelons comme en \cite[\S 8.2]{D}   que cette d\'efinition est plus large que  celle de strate semi-simple gauche dans  \cite{S5}, qui correspond \`a  $k=0$ dans la somme pr\'ec\'edente.

On dit enfin qu'une telle strate est une {\it strate semi-simple de $\mathfrak g_2(F)$ } si  $\L$ correspond \`a un point rationnel de l'immeuble de $\G2(F)$ (voir \cite{GY} et  \S \ref{strates.normes})
et si $\beta$ appartient \`a $\mathfrak g_2(F)$.
Dans toute la suite ces conditions sont suppos\'ees v\'erifi\'ees.

Soit donc $\beta$ un \'el\'ement semi-simple de  $\mathfrak g_2(F)$. Alors $\beta$
est  une d\'erivation de $V$, donc s'annule en $\un$.  Notons $V^0$ l'espace propre associ\'e \`a la valeur propre $0$ de $\beta$ ; il contient $\un$  et tous les produits de deux de ses \'el\'ements~:
$$\forall x,y\in V^0, \quad \beta(xy)=\beta(x)y+x\beta(y)=0.$$
De plus, puisque $\beta$ est aussi un \'el\'ement semi-simple de $\mathfrak{so}_F(V)$, le sous-espace $V^0$ est non isotrope et de dimension paire.
 C'est donc une sous-alg\`ebre de composition de dimension paire de  $V$.

\subsection{Sous-alg\`ebres de composition}\label{composition}

Un bref rappel sur les sous-alg\`ebres de composition de $V$ s'impose ici ;
  les faits cit\'es se trouvent soit dans  la r\'ef\'erence de base   \cite[Ch. 1 et 2]{SV},
soit dans \cite[\S 1]{RS} ou \cite[\S 8]{GY}.
Commen\c cons par le proc\'ed\'e de doublement, qui est fondamental et fournit une r\`egle de calcul largement utilis\'ee dans tout ce paragraphe \ref{strates}.

\begin{Proposition}{\bf \cite[Proposition 1.5.1]{SV}}\label{double}
Soit $C$ une alg\`ebre de composition et $D$ une sous-alg\`ebre de composition de $C$, propre et de dimension finie. Alors pour tout $a \in D^\perp$ de norme non nulle la somme
$D \perp Da$ est une sous-alg\`ebre de composition de $C$ dans laquelle le produit est donn\'e par:
$$
(x+ya) (u+va) = (xu - Q(a) \bar v y ) + (vx+y\bar u) a \qquad (x, y, u ,v \in D).
$$
\end{Proposition}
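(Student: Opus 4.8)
The plan is to carry out the classical Cayley--Dickson (doubling) construction, using the composition-algebra identities recalled above together with two consequences of the hypotheses. Since $\un\in D$ and $a\in D^\perp$ we have $f(a,\un)=0$, hence $\tr a=0$, $\bar a=-a$ and $a^2=-Q(a)\un$; and from $a\bar z+z\bar a=f(a,z)\un=0$ we obtain the key relation $za=a\bar z$ for all $z\in D$. The hypothesis $Q(a)\neq0$ will be used to know that $a$, and the coefficient $-Q(a)=a^2$, are invertible.

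\textbf{Preliminaries.} For $v,d\in D$ one has $f(va,d)=f(a,\bar v d)=0$ since $\bar v d\in D\perp a$; hence $Da\subseteq D^\perp$, and as $Q|_D$ is nondegenerate, $D\cap Da\subseteq D\cap D^\perp=0$, so the sum $D+Da$ is direct. The map $v\mapsto va$ is injective on $D$, because $(va)\bar a=Q(a)v$ with $Q(a)\neq0$; thus $\dim(D\oplus Da)=2\dim D<\infty$. Finally $D\perp Da$ is an orthogonal decomposition carrying a nondegenerate form: $Q|_D$ by hypothesis, $Q|_{Da}$ because $Q(va)=Q(v)Q(a)$ with $Q(a)\in F^\times$.

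\textbf{The multiplication formula.} Expanding, $(x+ya)(u+va)=xu+x(va)+(ya)u+(ya)(va)$, so it remains to prove, for $x,y,u,v\in D$, that $x(va)=(vx)a$, that $(ya)u=(y\bar u)a$, and that $(ya)(va)=-Q(a)\,\bar v y$. I would use that $C$ is \emph{alternative} --- a standard consequence of the composition identities, obtained from $\bar x(xy)=Q(x)y$ and the quadratic relation $x^2=\tr(x)\,x-Q(x)\un$ --- so that $C$ satisfies left- and right-alternativity and the Moufang identities. Write $[p,q,r]=(pq)r-p(qr)$ for the associator, which in $C$ is alternating and trilinear. Pushing $a$ leftwards by means of $za=a\bar z$ and $\overline{vx}=\bar x\bar v$ gives $x(va)=(vx)a+[a,\bar x,\bar v]-[x,a,\bar v]$, and the two associators cancel: replacing $\bar x$ by $\tr(x)\un-x$ and using that the associator with $\un$ vanishes, $[a,\bar x,\bar v]=-[a,x,\bar v]=[x,a,\bar v]$. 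This is the first identity. Taking conjugates in it and using $\overline{va}=-va$ and $a\,\overline{vx}=(vx)a$ yields $(va)\bar x=(vx)a$, whence $(va)x=(v\bar x)a$ on replacing $x$ by $\bar x$: this is the second identity. For the third, $ya=a\bar y$ and the middle Moufang identity $(pq)(rp)=p((qr)p)$ give $(ya)(va)=(a\bar y)(va)=a((\bar y v)a)=a(a(\bar v y))=a^2(\bar v y)=-Q(a)\,\bar v y$, using $(\bar y v)a=a\,\overline{\bar y v}=a(\bar v y)$ and the left-alternative law $a(az)=a^2z$. Assembling the three identities yields the displayed product rule; in particular $D\oplus Da$ is closed under multiplication.

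\textbf{Conclusion.} Hence $D\oplus Da$ is a finite-dimensional subspace of $C$, closed under multiplication, containing $\un\in D$, and carrying the nondegenerate restriction of $Q$, which is multiplicative there because it is multiplicative on $C$; so it is a composition subalgebra of $C$, with the stated multiplication. The crux is the relation $za=a\bar z$ forced by $a\in D^\perp$: it lets one transport every occurrence of $a$ to one side and then close with alternativity and the Moufang identities. The step needing most care is the cancellation of the associators $[a,\bar x,\bar v]$ and $[x,a,\bar v]$ in the first identity, where one must use both the alternating property and the substitution $\bar x=\tr(x)\un-x$.
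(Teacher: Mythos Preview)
The paper does not give its own proof of this proposition: it is quoted verbatim from \cite[Proposition 1.5.1]{SV} and used as a black box. Your argument is correct and is essentially the classical Cayley--Dickson computation one finds in Springer--Veldkamp: the key point is indeed the relation $za=a\bar z$ for $z\in D$ (from $a\in D^\perp$ and $\bar a=-a$), combined with alternativity and the Moufang identities of the ambient composition algebra $C$. Your associator manipulation for $x(va)=(vx)a$ and your use of the middle Moufang identity for $(ya)(va)=-Q(a)\bar vy$ are both sound.
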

On remarque sur cette formule que la multiplication \`a gauche (resp. \`a droite) par $D$ sur $Da$ v\'erifie 
$u(vx) = (uv) x$ (resp.  $(xu)v=x(uv)$) pour tous $x \in Da$ et $u,v \in D$ si et seulement si $D$ est commutative, auquel cas $Da$ est un bimodule sur $D$.

Une  sous-alg\`ebre de composition $D$ de $V$ distincte du centre $F \un$ de $V$ et de $V$ 
elle-m\^eme est de dimension   $2$ ou $4$ ; sa description  d\'epend de la restriction de $Q$ \`a $D$.

\begin{enumerate}
    \item Si $D$ est de  dimension $2$ et contient des vecteurs isotropes pour $Q$, elle peut s'\'ecrire sous la forme $D = F \un \oplus F a$ pour un \'el\'ement
    $a$ v\'erifiant $Q(a)=-1$ et $f(a, \un) = 0$.
Alors $e_D^+ = \frac 1 2 (\un + a )  $ et $e_D^- = \frac 1 2 (\un - a )  $
forment la seule paire d'idempotents de $D$ v\'erifiant $e_D^+ e_D^- =0$. On a  
$Q(e_D^+)= Q(e_D^-)=0 $, $\overline{e_D^+}=e_D^-$,  
 $e_D^+ + e_D^- = \un$   et $f(e_D^+, e_D^-)=1$.

Soit $W = D^\perp$. 
Les sous-espaces $W_D^+= e_D^+ W$ et $W_D^-=e_D^-W$ forment une polarisation compl\`ete de $W$. On a $W_D^+ =  \{ x\in V \vert  e_D^+\cdot x=x \text{ et } x\cdot e_D^+=0\}$.   

Le groupe $\SL_F(W_D^+) \simeq \SL(3,F)$ s'injecte naturellement dans $\oG$
(l'action sur $W_D^-$ \'etant donn\'ee par dualit\'e et l'action sur $D$ triviale);  son image est le
 fixateur (point par point) de $D$  dans $\oG$.

 \begin{Notation}\label{betachapeau1}
 Une telle sous-alg\`ebre $D$ \'etant fix\'ee,
 on note  $\phi \mapsto \check\phi$ l'injection naturelle de
 $\mathfrak{sl}(W_D^+) \simeq \mathfrak{sl}(3,F)$ dans  $\mathfrak g_2(F)$ ;  $\check\phi$ agit par $0$ sur $D$ et par l'oppos\'e du transpos\'e de $\phi$ sur $W_D^-$.
 \end{Notation}

\item Si $D$ est anisotrope de  dimension $2$, c'est une extension quadratique de $F$
de la forme  $D = F \un \oplus F a$,  pour un \'el\'ement
    $a$ orthogonal \`a $\un$ tel que $-Q(a) $ n'est pas un carr\'e.
    Alors $D^\perp$ est un  $D$-espace vectoriel de dimension $3$
    que $Q$ munit d'une structure hermitienne sur $D$.
    Le groupe $\SU(D^\perp) \simeq \SU(2,1)(D/F)$ s'injecte naturellement dans $\oG$
    (via l'action triviale sur $D$) ;  son image est
    le fixateur de $D$  dans $\oG$.

 \begin{Notation}\label{betachapeau2}
 Une telle sous-alg\`ebre $D$ \'etant fix\'ee,
 on note $\phi \mapsto  \vec\phi$ l'injection naturelle de
 $\su(D^\perp) \simeq \mathfrak{su}(2,1)(D/F)$
  dans  $\mathfrak g_2(F)$ ;  $\vec\phi$ agit par $0$ sur $D$.
 \end{Notation}

\item Si $D$ est de  dimension $4$, $D$ est une alg\`ebre de quaternions. Si elle contient des \'el\'ements isotropes, elle est isomorphe  \`a l'alg\`ebre des matrices $2\times 2$ \`a coefficients dans $F$ et  la restriction de $Q$ \`a $D$ co\"\i ncide avec le d\'eterminant. Sinon, $D$ est un corps de quaternions et la restriction de $Q$ \`a $D$ co\" \i ncide avec sa norme. 

Dans les deux cas, notons $D^1$ le groupe des \'el\'ements de norme 1 de $D$. D'apr\`es \cite[57.13]{OM} toute rotation de $D$
fixant $\un$ est de la forme $x \mapsto cxc^{-1}$ pour un $c \in D^\times$, de sorte que 
le groupe $\SO_F(D)$ des automorphimes sp\'eciaux orthogonaux de $D$ est form\'e des applications  
  $x \mapsto h(x) = p cxc^{-1}$ ($x \in D$) avec $c \in D^\times$ et $p = h(1) \in D^1$. 

Pour tout choix de $a\in D^\perp$ de norme $Q(a) $ non nulle,  $V$ est somme orthogonale de $D$ et $Da$.
Un \'el\'ement  $g$ du stabilisateur $\oG_{D}$ de $D$ dans $\oG$ est d\'ecrit  par deux \'el\'ements   de $D$, $c$ de norme non nulle  et $p$ de norme 1, de la fa\c con suivante \cite[(2.2)]{SV}~:
\begin{equation*}
 \forall x, y\in D, \quad g(x+ya)=cxc^{-1} +(pcyc^{-1})a.
\end{equation*}
Puisque la  multiplication \`a droite par $a$ d\'efinit une similitude de rapport $Q(a)$ du $F$-espace vectoriel $D$ dans le $F$-espace vectoriel $Da$ qui induit un isomorphisme de $SO_{F}(D)$ dans $SO_{F}(Da)$, on conclut que la restriction au sous-espace $W$ identifie 
$\bar G_{D}$ au groupe $SO_{F}(D^\perp)$.
\end{enumerate}

Il est bien connu que 
les deux classes de conjugaison de sous-groupes de Levi maximaux de 
 $G$ consistent en des sous-groupes 
 $\GL(2,F)$ associ\'es \`a des racines courtes pour l'une, longues pour l'autre.  On peut les identifier  en termes de 
l'\'etude qui pr\'ec\`ede. Soit en effet $D$ une sous-alg\`ebre de composition d\'eploy\'ee de dimension $4$ de $V$. Toute polarisation compl\`ete de $D^\perp$, soit 
$D^\perp= W \oplus W'$, donne lieu \`a une injection naturelle de $\GL_F(W)$ dans 
$SO_{F}(D^\perp)$ donc dans $\bar G_{D}$. Il n'est pas difficile de voir, par exemple en examinant les preuves des propositions  \ref{volume0sl} et \ref{dimension4}, que l'on obtient ainsi la famille des ``$\GL(2)$ courts'' si le produit $W.W$ est nul, celle des ``$\GL(2)$ longs'' si le produit $W.W$ est non nul. Lorsque  $W.W$ est non nul, l'orthogonal de 
$WW \oplus W'W'$ dans $D$ est une sous-alg\`ebre de composition d\'eploy\'ee $X$ de dimension $2$ de $V$ et  $\GL_F(W)$ est un sous-groupe de Levi de  $\SL_F(W_X^+)$. 

Pour terminer ce paragraphe, donnons une construction \'el\'ementaire dont nous aurons besoin \`a plusieurs reprises. 
\begin{Lemma}\label{idempotents}
Soient $L$ et $L'$ deux droites isotropes de $V$, orthogonales \`a $\un$,  de  g\'en\'erateurs $h$ et $h'$    v\'erifiant $f(h,h')=1$. Les \'el\'ements $h-h'$ et $h+h'$ sont de norme $-1$ et $ 1$  respectivement et on a : 
$$
\begin{aligned}
 &(h+h') (h-h')= (h+h') (h-h')^{-1} = h'h-h h' \\  \text{ avec }   &-h'h-h h' =\un, \ 
 (-h'h)^2=  -h'h, \             (-h h')^2=-h h' . 
 \end{aligned}
$$
Ces \'el\'ements ne d\'ependent pas du choix des g\'en\'erateurs $h$ et $h'$, on notera donc 
$$e^+(L, L')= -hh',    \  e^-(L, L')= -h' h , \  c(L, L')=  (h+h') (h-h')=  e^+(L, L')-e^-(L, L').$$ 
On a $c(L, L')h=h, c(L, L')h'=-h'$. 
Le sous-espace 
$  F[c(L, L')]$ est 
  une sous-alg\`ebre de composition de dimension $2$ de $V$  dont la paire d'idempotents standard est 
$(e^+(L, L'),e^-(L, L'))$. Il est orthogonal \`a $L \oplus L'$ et leur somme est une sous-alg\`ebre de composition de dimension $4$ de $V$. 
\end{Lemma}
\begin{proof}
Il s'agit de calculs standard dans l'alg\`ebre $V$, utilisant les formules de 
\cite[\S 1]{SV}. De fait $h^2 = h'^2=0$, $hh'+h'h+f(h,h')\un =0$ et 
$(h'h)(hh')= h'(h^2h')=0$. 
\end{proof}

\subsection{Les \'el\'ements semi-simples de $\mathfrak{g}_{2}(F)$}\label{strates.3}

Soit $\beta$ un \'el\'ement semi-simple non nul de $\mathfrak{g}_{2}(F)$ et $V^0$
son noyau, sous-alg\`ebre  de composition de dimension $2$ ou $4$ de  $V$. On note $W$ l'orthogonal de $V^0$ ; il est stable par multiplication  \`a droite et \`a gauche par  $V^0$, et lorsque $V^0$ est commutative c'est   un bimodule sur $V^0$. La d\'ecomposition  $V=V^0\perp W$ est stable par $\beta$   et la restriction de $\beta$ \`a $W$ induit un endomorphisme de $W$ not\'e $\beta_{W}$.
La d\'erivation $\beta$ commute aux multiplications  \`a droite et \`a gauche par  $V^0$ ; en particulier, si $V^0$ est de dimension $2$ 
elle est $V^0$-lin\'eaire
  :
\begin{equation*}\begin{aligned}
\forall w\in W, v,v'\in V^0, &\quad  f(v' w,v)=f(w, \bar{v'} v)=0 \text{ et } f(wv',v)=f(w,v\bar v')=0, \\
&\quad  \beta(vw)=v\beta(w) \text{ et } \beta(wv)=\beta(w)v.
\end{aligned}
\end{equation*}

Notons    $\widetilde G_\beta$  le centralisateur de $\beta$ dans $\GL_F(V)$ puis
$G_\beta  $, $G_\beta^\prime  $ et $\bar G_\beta   $ ses centralisateurs dans $\O_F(V)$, $\O^\prime_F(V)$ et $\G2(F)$ respectivement.

\begin{Lemma}\label{dim2}
 On suppose que $V^0$ est de dimension $2$.
\begin{enumerate}
    \item
  Si $V^0$ est un plan hyperbolique, les sous-espaces $W^\pm = W_{V^0}^\pm$
sont  stables par $\beta$. La restriction $\beta_{W^+}$ de $\beta$ \`a $W^+$ est un \'el\'ement de $\mathfrak{sl}(W^+) \simeq \mathfrak{sl}(3,F)$ et $\bar G_{\beta}$ est le centra\-li\-sateur de $\beta_{W^+}$ dans $\SL_F(W^+) \simeq \SL(3,F)$.
\item Si  $V^0 $ est une extension quadratique $F' $ de $F$,   la restriction $\beta_{W}$ appartient \`a l'alg\`ebre de Lie de $\SU(W)$, isomorphe \`a  $\mathfrak{su}(2,1)(F^\prime/F)$, et $\bar G_{\beta}$ est le centralisateur de $\beta_{W}$ dans $\SU(W)\simeq \SU(2,1)(F'/F)$.
\end{enumerate}
\end{Lemma}

\begin{proof}
Dans le cas (i), les sous-espaces $W^\pm$ sont   stables par $\beta$ qui est $V^0$-lin\'eaire,  et  $\beta_{W^+}$ est un \'el\'ement de $\msl(W^+)$.
Tout \'el\'ement $g$ de $\bar G_{\beta}$ stabilise $V^0$ et $W$. La restriction de $g$ \`a $V^0$ est un automorphisme d'alg\`ebre donc $g$ fixe $e_{V^0}^+$ et $e_{V^0}^-$ ou les \'echange. Mais dans le deuxi\`eme cas, $g$ \'echangerait les sous-espaces vectoriels $W^+$ et $W^-$ ce qui entra\^\i nerait que $\beta_{W^+}$ et son oppos\'e soient conjugu\'es, ce qui est impossible en dimension impaire.\\
Par suite, tout \'el\'ement de $\bar G_{\beta}$ est l'identit\'e sur $V^0$ et stabilise $W^+$ et $W^-$. L'application de restriction \`a $W^+$ d\'efinit un plongement de $\bar G_{\beta}$ dans $\SL_F(W^+) $  dont l'image est le  centralisateur de $\beta_{W^+}$.

Dans le cas (ii), l'\'el\'ement $\beta_W$ appartient \`a $\su(2,1)(F'/F)$ et le sous-groupe des automorphismes de $V$ triviaux sur $V^0$ est isomorphe \`a $\SU(2,1)(F'/F)$ via l'application de restriction \`a $W$  (\S \ref{composition}). Pour finir la d\'emonstration, il suffit de montrer que tout \'el\'ement de $\bar G_{\beta}$ est trivial sur $V^0$. \\
Pour cela, on consid\`ere la $F'$-alg\`ebre $V'=F'\otimes_{F}V$ dont le groupe d'automorphismes $\bar G'$ est $\G2 (F')$. L'\'el\'ement $\beta'=1\otimes \beta$ appartient \`a $\Lie \bar G'$ et son noyau est ${V'}^0=F'\otimes_{F}V^0$ qui est maintenant une sous-alg\`ebre d\'eploy\'ee de dimension 2. Par (i), tous les \'el\'ements de $\bar G'_{\beta'}$ sont triviaux sur ${V'}^0$ donc en particulier, tous les \'el\'ements de $1\otimes \bar G_{\beta}$. On en d\'eduit que  tout \'el\'ement de $\bar G_{\beta}$ est trivial sur $V^0$.
\end{proof}

\begin{Lemma}\label{dim4} On suppose que $V^0$ est de dimension $4$. Alors $\beta_{W}$ est un endomorphisme de $W$ dont le polyn\^ome minimal est de la forme $X^2-u$, $u\in F^\times$.
\begin{enumerate}
	\item Si le polyn\^ome minimal de $\beta_{W}$ est scind\'e, il poss\`ede deux racines oppos\'ees $\lambda$ et $-\lambda$. Les deux sous-espaces propres de $\beta_{W}$, not\'es $W_\lambda$ et $W_{-\lambda}$, d\'efinissent une polarisation compl\`ete de $W$. Le groupe $\bar G_{\beta}$ s'identifie par la restriction \`a $W_\lambda$ \`a $GL(W_\lambda)\simeq \GL(2,F)$. Il est en fait un sous-groupe de Levi de $\bar G$ attach\'e \`a une racine courte. Dans ce cas, l'alg\`ebre $V^0$ est n\'ecessairement d\'eploy\'ee.
	\item  Si le polyn\^ome minimal de $\beta_{W}$ est irr\'eductible, $\beta_{W}$ engendre  une extension quadratique $F[\beta_{W}]$ de $F$ et $W$ est un $F[\beta_{W}]$-espace vectoriel muni de la forme hermitienne $\Phi_{W}$ v\'erifiant~: $\tr_{F[\beta_{W}]/F} \circ \Phi_{W}=f_{\vert W\times W}$. Ainsi,  $\beta_W$ est un \'el\'ement scalaire de ${\mathfrak u}(\Phi_{W}, W)$. Le centra\-li\-sateur $\bar G_{\beta}$ de $\beta$ s'identifie par restriction \`a $W$  au groupe $\U(\Phi_{W},W)$, groupe unitaire anisotrope ou quasi-d\'eploy\'e selon que $V^0$ est anisotrope ou d\'eploy\'ee.
\end{enumerate}
\end{Lemma}

\begin{proof} Soit $a$ un \'el\'ement  de norme non nulle  dans l'orthogonal de $V^0$. D'apr\`es   la d\'emonstration du lemme 2.4.3 de \cite{SV}, il existe un \'el\'ement $c$ de $  V^0$, de carr\'e non nul et de trace nulle tel que : $ \  \beta(v+v'a)=v'(c a)=(c v')a \  $ pour tous $v,v'\in V^0$.  Il s'ensuit que  $\beta_{W}$ n'est pas scalaire et $\beta_W^2$ est la multiplication par $c^2$, non nul puisque   $\beta$ est   semi-simple non nul. Le polyn\^ome minimal de $\beta_{W}$ est donc de la forme indiqu\'ee.

Le centralisateur $\oG_{\beta}$ de $\beta$ s'identifie, via la restriction \`a $W$, au sous-groupe des \'el\'ements de $SO_{F}(W)$ qui commutent \`a $\beta_{W}$ (\ref{composition}(iii)).

Dans le cas  (ii), $\oG_{\beta}$ s'identifie donc au groupe des automorphismes $F[\beta_{W}]$-lin\'eaires de $W$ qui conservent la forme hermitienne $\Phi_{W}$, c'est-\`a-dire \`a $U(\Phi_{W}, W)$.

 Dans le cas   (i),   $\beta_W$ est diagonalisable de valeurs propres $\lambda$ et $-\lambda$, chacune de multiplicit\'e $2$ puisque $\beta_{W}$ est de trace nulle. Si $e^+_{F[c]}$ et $e^-_{F[c]}$ sont les idempotents orthogonaux de $F[c]$ (\S \ref{composition}),  les sous-espaces propres 
de $\beta_W$ sont   $W_\lambda=(e^+_{F[c]}V^0)a$ et  $W_{-\lambda}=(e^-_{F[c]}V^0)a$ et forment une polarisation compl\`ete de $W$. Le groupe  $\oG_\beta$ s'identifie alors \`a $\GL(W_\lambda) \simeq GL(2,F)$ par la restriction \`a $W_\lambda$. Pour voir qu'il s'agit d'un Levi attach\'e \`a une racine courte on peut s'appuyer sur le paragraphe \ref{composition} et remarquer soit que $W_\lambda W_\lambda$ est nul, car $2\lambda$ n'est pas valeur propre de $\beta$, soit que $W_\lambda $ et  $W_{-\lambda}$ 
 se d\'ecomposent non trivialement en somme directe de leurs intersections avec $W^+_{F[c]}$ et  $W^+_{F[c]}$. 

On peut aussi identifier 
  $\bar G_\beta$   au centralisateur du tore de rang 1 de $\GL(W_\lambda)$ form\'e des automorphismes scalaires. Ce dernier est l'image par la restriction \`a $W_\lambda$ d'un tore $\bar T$ de rang 1 de $\bar G$ contenu dans $\oG_{\beta}$ dont les \'el\'ements agissent trivialement sur $V^0$ et dont le centra\-li\-sateur est pr\'ecis\'ement $\bar G_{\beta}$. A conjugaison pr\`es dans $\bar G$ (par un \'el\'ement de $\bar G$ envoyant $c$ sur un multiple de $X_0$ dans les notations de \cite{RS}, voir  {\it loc. cit.} Theorem 1) on reconna\^\i t le tore standard de $\bar G$ dont le centralisateur est facteur de Levi d'un sous-groupe parabolique maximal de $\bar G$ attach\'e \`a une racine courte \cite[\S 1.4]{RS}.  
\end{proof}

{\bf Remarque.}
 
Reprenons les notations du lemme \ref{dim2} et  consid\'erons le cas o\`u $\beta_{W^+}$ (i)  ou $\beta_W$  (ii) est non nul mais a une valeur propre nulle. 
	Alors  le sous-espace propre correspondant est forc\'ement de dimension $1$ (sur $F$  pour le cas (i), sur $F'$   pour le cas (ii))  et l'on obtient dans les deux cas un sous-espace $W_0$  de $W$, de dimension $2$ sur $F$, sur lequel $\beta_W$ est nul. Ce sous-espace est non d\'eg\'en\'er\'e et orthogonal \`a $V^0$ : par le proc\'ed\'e de doublement, 
	$V^0 \perp W_0$ est une sous-alg\`ebre de composition. Les cas \'etudi\'es dans le lemme \ref{dim4} peuvent ainsi \^etre consid\'er\'es comme des cas limites de ceux du lemme \ref{dim2}, on y reviendra plus loin. On notera toutefois la diff\'erence importante dans l'\'etude du centralisateur.

\subsection{Propri\'et\'es des \'el\'ements semi-simples de $\mathfrak g_2(F)$}\label{strates.35} 

De temps \`a autre dans ce travail, on a besoin d'un argument au cas par cas ; apr\`es tout $G_2$ est un groupe exceptionnel. Nous avons choisi de r\'eunir dans ce paragraphe des propri\'et\'es tr\`es pr\'ecises dont l'utilit\'e appara\^\i tra plus loin. On garde les notations du paragraphe pr\'ec\'edent.

 \begin{Lemma}\label{central}  
Supposons   $V^0$   anisotrope  de dimension $2$. 
Le centralisateur de $\beta$ dans $\O_F(V)$ est contenu dans    
   $\O_F(V^0)   \times U(W, V^0/F)$.
   \end{Lemma} 
\begin{proof}  Le centralisateur de $\beta$ dans $\O_F(V)$ est  produit de 
   $\O_F(V^0)$ par le centra\-lisateur de $\beta_W$ dans $\O_F(W)$ : il s'agit de 
  v\'erifier que   
l'alg\`ebre $F[\beta_W]$ contient $V^0$. Le fait que $\beta_W$, \'el\'ement  
semi-simple de    $\mathfrak{su} (W, V^0/F)$,
 soit injectif et de trace nulle sur $V^0$  intervient de fa\c con essentielle. 
  Soit $P_{V^0}$ le polyn\^ome minimal de $\beta_W$ sur $V^0$.  

Si  $P_{V^0}$ est irr\'eductible,  
$\beta_W$  engendre une extension de $V^0$ de degr\'e   $3$ (le degr\'e $1$ est impossible car $\beta_W$ est de trace nulle sur $V^0$ et $p \ne 3$), soit $[V^0[\beta_W] : F]=6$.    
Son polyn\^ome minimal sur $F$ est   multiple de $P_{V^0}$ et de degr\'e pair car l'involution adjointe agit non trivialement sur $F[\beta_W]$. Il est donc 
  de degr\'e~$6$ et $V^0[\beta_W]= F[\beta_W]$. 
 
Si  $P_{V^0}$  est produit de  trois facteurs de degr\'e $1$,  de 
   racines $\beta_1$, $ \beta_2 $, $ \beta_3 \in V^0$,    il faut voir que $\beta_i \notin F$.  Si un vecteur   propre correspondant \`a $\beta_i$ est anisotrope, c'est le cas 
  car $\sigma(\beta_i) = -\beta_i$. 
 Sinon les droites propres de $\beta_1$ et $\beta_2$ (par exemple) sont isotropes en dualit\'e et 
 on ne peut avoir $\beta_1 \in F$ sinon $\beta_2 = -\beta_1$ et par trace  $\beta_3=0$. 
 C'est le m\^eme chose si  $P_{V^0}$  est produit de deux facteurs de degr\'e $1$. 
   
   Si  $P_{V^0}$  a un facteur  irr\'eductible $Q$ de degr\'e $2$ et une racine $\beta_1 \in V^0$, la droite propre pour  $\beta_1$ est anisotrope et $\beta_1 \notin F$. Soit $\beta_2$ la restriction de $\beta_W$ au noyau de $Q(\beta_W)$. Son polyn\^ome minimal sur $F$ est multiple de $Q$ mais ne peut \^etre \'egal \`a $Q$ car $\tr_{V^0}\beta_2 = - \beta_1 \notin F$, on a donc  $ F[\beta_2] = V^0[\beta_2]$.   
\end{proof} 

   \begin{Lemma}\label{centralGL}  
Supposons   $V^0$  d\'eploy\'ee  de dimension $2$. Soit  $W = W^+ \oplus W^-$ la 
polarisation compl\`ete canonique de $W$ et soit   $\iota$ l'injection correspondante de $\GL_F(W^+)$ dans $\SO_F(W)$. 
  Le centralisateur de $\beta$ dans $\SO_F(V)$ est contenu dans    
   $\SO_F(V^0) \times \iota(\GL_F(W^+))$.
\end{Lemma} 
\begin{proof} Le centralisateur de $\beta$ dans $\O_F(V)$  est produit de $\O_F(V^0)$ et du centra\-lisateur de $\beta_W$ dans $\O_F(W)$. 
  Un \'el\'ement du centralisateur de $\beta_W$ stabilise les sous-espaces propres de $\beta_W$, il suffit donc de v\'erifier que chacun d'entre eux est enti\`erement contenu soit dans $W^+$ soit dans $W^-$, ou encore que les polyn\^omes minimaux $P^+$ et $P ^-$ de $\beta_{W^+}$ et $\beta_{W^-}$, restrictions de $\beta$ \`a 
  $W^+$ et $W^-$ respectivement,  sont premiers entre eux. L\`a encore, le fait que $\beta_{W^+}$ soit injectif et de trace nulle est essentiel. 
  Comme $\beta_{W^-} = - ^t \beta_{W^+}$ on a $P^-(X)= \pm P^+ (-X)$. 
  
  Si $P^+$ est irr\'eductible il est de degr\'e $3$, car la trace d'un scalaire non nul est non nulle 
  ($p \ne 3$), et $P^-(X)= - P^+ (-X)$ est irr\'eductible et distinct de $P^+$ (les traces de 
   $\beta_{W^-} $ et $ \beta_{W^+}$ sont oppos\'ees), ils sont donc premiers entre eux. 
  
  Si $P^+$ a deux ou trois facteurs irr\'eductibles de degr\'e $1$, de racines $\lambda_1$, $\lambda_2$, $\lambda_3$, les racines de $P^-$ sont les oppos\'ees des pr\'ec\'edentes 
  et on ne peut avoir d'\'egalit\'e $\lambda_i =-\lambda_j$ car les $\lambda_i$ sont non nuls et de somme nulle. 
  
  Enfin, si $P^+(X)=(X-\lambda)Q(X)$ avec $\lambda \in F $ et $Q$ irr\'eductible de degr\'e $2$, 
  on a $P^-(X)=(X+\lambda) Q(-X)$. Il est premier  \`a $P^+$ car $\lambda$ est non nul et 
  le c\oe fficient de $X$ dans $Q$ est $ \lambda$.  
\end{proof}

   \begin{Lemma}\label{centraldim4}  
   Soit $[\Lambda, n, r ,\beta]$, $0 \le r < n$, une strate semi-simple non nulle de $\mathfrak{g}_{2}(F)$.  
   Dans les cas suivants : 
   \begin{description}
\item[--]    $V^0$ est  de dimension $4$ ;  
\item[--]   $V^0$  est anisotrope de dimension $2$ et $\beta_{W}$ engendre une extension cubique de $V^0$; 
\item[--]   $V^0$  est d\'eploy\'ee de dimension $2$ et $\beta_{W^+}$ engendre une extension cubique de $F$ ;   
\end{description}
 la strate  $[\Lambda, n, n-1 ,\beta]$ est semi-simple \'egalement.  
\end{Lemma} 
\begin{proof} Si  $V^0$ est  de dimension $4$,  nous  reprenons les notations du lemme 
\ref{dim4}. Si $u$ est un carr\'e, alors  $-n$ est la  valuation de $\lambda$ et les polyn\^omes caract\'eristiques des strates simples $[\L\cap W_\lambda,n,n-1, \lambda]$ et 
$[\L\cap W_{-\lambda},n,n-1, -\lambda]$ sont premiers entre eux. Si $u$ n'est pas un 
carr\'e, $\beta_W$ est un \'el\'ement de trace nulle dans l'extension quadratique de $F$ qu'il engendre, il est donc   {\it minimal } sur $F$ au sens de \cite[(1.4.14)]{BK1} et la strate 
$[\L\cap W,n,n-1, \beta_W]$ est simple par  \cite[(1.4.15)]{BK1}. 

Supposons   $V^0$  de dimension $2$.  Une extension cubique $E$ de $F$ est ou bien totalement ramifi\'ee ou bien non ramifi\'ee et on v\'erifie imm\'ediatement que dans les deux cas, pour  $p\ne 3$, 
les \'el\'ements de trace nulle de $E$ sont minimaux sur $F$. Si $V^0$  est anisotrope, 
$\beta_{W}$ est minimal sur $V^0$,  
 la strate  $[\L_W,n,n-1, \beta_W]$ est donc simple sur $V^0$ et a fortiori sur $F$. Si 
 $V^0$  est d\'eploy\'ee, 
 $\beta_{W^+}$   
est  minimal sur $F$ et la strate $[\L_{W^+},n,n-1, \beta_{W^+}]$ est simple. Par un raisonnement semblable \`a celui du lemme pr\'ec\'edent on constate que
 le polyn\^ome caract\'eristique de cette strate est premier \`a celui de
  $[\L_{W^-},n,n-1, \beta_{W^-}]$, leur somme est donc semi-simple.

\end{proof}

\subsection{Normes d'alg\`ebre autoduales}\label{strates.normes}

Un point rationnel de l'immeuble \'etendu de $\GL_F(V)$ peut \^etre vu
sous diff\'erents aspects~: comme une suite de r\'eseaux de $V$, comme une fonction de r\'eseaux de $V$ \`a points de discontinuit\'e rationnels, ou comme une {\it norme} \`a valeurs rationnelles
sur $V$. Les d\'efinitions et le passage d'un  langage \`a un autre sont d\'etaill\'es dans \cite[I]{BL}. De m\^eme, un point rationnel de l'immeuble de $\SO_F(V)$ peut \^etre vu
  comme une suite de r\'eseaux autoduale de $V$, comme une fonction de r\'eseaux autoduale de $V$ \`a points de discontinuit\'e rationnels, ou comme une {\it  norme  autoduale}  \`a valeurs rationnelles
sur $V$  : voir \cite[\S 3]{BS}.
Consid\'erant l'immeuble de  $\G2(F)$ comme sous-ensemble simplicial de celui de $\SO_F(V)$,
le langage le plus adapt\'e est celui des  normes : les points rationnels de l'immeuble de $\G2(F)$ sont alors les {\it   normes d'alg\`ebre autoduales} de $V$ \cite[Proposition 4.2]{GY}.  Pour \'etablir dans la suite les propri\'et\'es dont nous avons besoin, nous passons constamment d'un langage \`a l'autre.

   Rappelons maintenant les d\'efinitions essentielles. Une {\it norme } (ou $F$-norme)  rationnelle sur $V$ est une application $\alpha$ de $V$ dans $\mathbb Q \cup \infty$ v\'erifiant pour tous $x, y$ dans $V$ et $\xi$ dans $F$ :
$$\alpha(x+y) \ge \inf(\alpha(x), \alpha(y))  ; \ \
    \alpha(\xi x) = v_F(\xi) + \alpha(x)   ; \  \
   \alpha(x) = \infty \iff x = 0.
$$
   Si $V$ est muni d'une forme quadratique ou hermitienne non d\'eg\'en\'er\'ee $f$,  la {\it norme duale} d'une norme $\alpha$ sur $V$ est la norme $\alpha^\ast$ sur $V$ d\'efinie par
  $$\alpha^\ast (v) = \inf_{x \in V}(v_F(f(v,x)) - \alpha(x))  \quad (v \in V).
  $$
   La norme $\alpha$ est dite {\it autoduale} si $\alpha=\alpha^\ast$. Enfin, c'est une {\it norme d'alg\`ebre} si elle v\'erifie $$\alpha(xy) \ge \alpha(x) + \alpha(y) \  \text{ pour tous }   x, y \in V.$$  Une norme d'alg\`ebre autoduale de l'alg\`ebre d'octonions $V$ v\'erifie toujours
  $\alpha(\un)=0$ et $\alpha(\bar x )= \alpha(x)$ ($x \in V$) \cite[Corollary 7.5]{GY}.

  Soit $\alpha$ une norme  rationnelle sur $V$. La fonction de r\'eseaux correspondante est la fonction $\lambda_\alpha$ d\'efinie par $\lambda_\alpha(r) = \{v \in V / \ \alpha(v) \ge r \}$
  $(r \in \mathbb R)$. Inversement la norme associ\'ee \`a une fonction de r\'eseaux $\lambda$ est
  $v \mapsto \alpha_\lambda (v) = \sup \{r \in \mathbb R / \ v \in \lambda(r)\} $ $(v \in V)$.
  Si $f$ est une forme sur $V$ comme ci-dessus, on d\'efinit le dual d'un r\'eseau $L$
  de $V$ par
  $L^\ast = \{x \in V / \  f(x, L) \subset \pF \}$, et la fonction duale d'une fonction de r\'eseaux $\lambda$ de $V$ par
  $$
  \lambda^\ast (r)  = \left( \bigcup_{s > -r } \lambda(s) \right)^\ast
     \quad (r \in \mathbb R).
  $$
  La norme $\alpha$ est autoduale si et seulement si la fonction de r\'eseaux associ\'ee l'est, c'est-\`a-dire co\"\i ncide avec la fonction duale $\lambda_\alpha^\ast$ \cite[Corollaire 3.4]{BS}.

  Si $\alpha_1$ et $\alpha_2$ sont des normes sur $V_1$ et $V_2$ respectivement,
  alors $$\alpha_1 \wedge \alpha_2 : v_1 +v_2 \mapsto \min\{ \alpha_1(v_1), \alpha_2(v_2)\} \quad
  (v_1 \in V_1,  \ v_2 \in V_2) $$
   est une norme sur $V_1 \oplus V_2$. Une norme $\alpha$  sur $V$ est dite {\it scind\'ee} par une d\'ecomposition en somme directe $V = V_1 \oplus V_2$
  si $ \  \alpha = \alpha_{|V_1} \wedge \alpha_{|V_2}$. La notion correspondante pour
  une fonction  de r\'eseaux $\lambda$ est : $ \ \lambda(r) = \lambda(r) \cap V_1 \oplus \lambda(r) \cap V_2 \  $ pour tout $r \in \mathbb R$. Enfin une base $(e_i)_{i \in I}$ de $V$
  est une {\it base scindante} pour $\alpha$ si la d\'ecomposition en somme de droites
  associ\'ee scinde $\alpha$. Si c'est le cas, la base duale $(e_i^\ast)_{i \in I}$
  scinde $\alpha^\ast $ et, pour tout $i \in I$,  $\alpha^\ast(  e_i^\ast) =
  -\alpha (  e_i ) $.

  Supposons  $V$   muni d'une forme non d\'eg\'en\'er\'ee.
\begin{itemize}
    \item Si $V$ est somme orthogonale de $V_1$ et $V_2$
  alors la norme duale de $\alpha_1 \wedge \alpha_2$ est $\alpha_1^\ast \wedge \alpha_2^\ast$~;
  la norme  $\alpha_1 \wedge \alpha_2$ est autoduale si et seulement si $\alpha_1$ et $\alpha_2$ le sont.

  Si $\alpha$ est une norme autoduale sur $V$, elle est scind\'ee par $V = V_1 \perp V_2$ si et seulement si ses restrictions \`a $V_1$ et $V_2$ sont autoduales. (Condition suffisante :   $\alpha_{V_1} \wedge \alpha_{V_2}$ est alors autoduale et   minore $\alpha$, d'o\`u l'\'egalit\'e car toutes deux sont maximinorantes.)

  \item Si $V$ est somme directe de  $W^+$ et $W^-$, sous-espaces   totalement isotropes en dualit\'e, et si $\alpha^+$ est une norme sur $W^+$, $\alpha^-$  une norme sur $W^-$,
  alors la norme duale de $\alpha^+ \wedge \alpha^-$ est  $  (\alpha^-)^\sharp \wedge (\alpha^+)^\sharp$ o\`u $ (\alpha^+)^\sharp$ est d\'efinie par
  \begin{equation}\label{150}
   (\alpha^+)^\sharp(v) = \inf_{x \in W^+}(v_F(f(v,x)) - \alpha^+(x))  \quad (v \in W^-).
\end{equation}
  et de m\^eme pour $ (\alpha^-)^\sharp$. La norme $\alpha^+ \wedge \alpha^-$ est autoduale si et seulement si $\alpha^- = (\alpha^+)^\sharp$.
\end{itemize}

  Examinons \`a pr\'esent les propri\'et\'es des normes d'alg\`ebre autoduales de
  l'alg\`ebre d'octonions $V$ scind\'ees par une d\'ecomposition
  $V = V^0 \perp W$  comme au paragraphe pr\'ec\'edent. Notre objectif est de montrer qu'une telle norme est enti\`erement d\'etermin\'ee par sa restriction \`a $W$ et de caract\'eriser cette restriction. 

\begin{Lemma}\label{151} Soit $V^0$ une sous-alg\`ebre de composition de $V$, $W$ son orthogonal et
  $\alpha$ une norme  d'alg\`ebre autoduale  de
   $V$ scind\'ee  par la d\'ecomposition
  $V = V^0 \perp W$. Alors $\alpha_{|V^0}$ est une norme d'alg\`ebre autoduale de $V^0$ 
  et $\alpha_{|W}$   est une norme   autoduale de $W$.
  De plus :
\begin{itemize}
    \item  si $V^0$ est anisotrope  on a  $ \  \alpha_{|V^0}= \dfrac 1 2 \,  v_F \circ Q$ ;
\item si $V^0$ est de dimension $2$ et d\'eploy\'ee, d'idempotents orthogonaux $e^+$ et $e^-$,
  on a \\
  $\alpha(\xi e^+ + \mu e^- ) = \min\{v_F(\xi),  v_F(\mu)\}$ ($\xi, \mu \in F$).
  \end{itemize}
\end{Lemma}

\begin{proof} La premi\`ere assertion d\'ecoule des rappels ci-dessus. Pour la seconde rappelons qu'en caract\'eristique r\'esiduelle diff\'erente de $2$, une norme est autoduale si et seulement si elle est maximinorante, i.e. \'el\'ement maximal de l'ensemble des normes v\'erifiant
$$\alpha(x) + \alpha(y) \le v_F(f(x,y))  \qquad (x, y \in V) $$  (voir \cite[\S 2]{GY}  ou
\cite[Proposition 3.2]{BS}, ou la r\'ef\'erence originelle \cite[Proposition 2.5]{BT}).
La norme $\alpha_{|V^0}$ v\'erifie donc $\alpha_{|V^0}(x) \le \frac 1 2 v_F(Q(x))$ pour tout $x \in V^0$. Il reste \`a v\'erifier que  si $V^0$ est anisotrope,  $\frac 1 2 v_F \circ Q$ est bien une norme sur $V^0$ et v\'erifie la condition de minoration ci-dessus, ce qui est facile.

Supposons maintenant $V^0$ d\'eploy\'ee de dimension $2$.  On a pour $\xi, \mu \in F$ :
$$
\aligned
\alpha( \xi e^+ + \mu e^- )  + \alpha(e^+) &\le
\alpha((\xi e^+ + \mu e^- )e^+)= \alpha(\xi e^+ ) = v_F(\xi ) + \alpha(e^+)
\\
\text{et de m\^eme } \
\alpha( \xi e^+ + \mu e^- )  + \alpha(e^-) &\le  v_F(\mu ) + \alpha(e^-)
\endaligned
$$
donc $\alpha( \xi e^+ + \mu e^- ) \le \min\{ v_F(\xi ),  v_F(\mu )\}$.
Mais  $\alpha$ est maximinorante et   $ \xi e^+ + \mu e^-  \longmapsto \min\{v_F(\xi),  v_F(\mu)\}$ d\'efinit bien une norme autoduale (imm\'ediat via  \ref{150}), d'o\`u l'\'egalit\'e.
\end{proof}

\begin{Lemma}\label{152}
Soit $E$ une extension quadratique de $F$ d'indice de ramification $e$.
Soit $W $ un espace vectoriel sur $E$ muni d'une forme quadratique $f$ pour sa structure d'espace vectoriel sur $F$ et soit $\Phi$ la forme hermitienne correspondante, caract\'eris\'ee par $f = \tr_{E/F} \circ \Phi$. Alors $\alpha \mapsto \tfrac 1 e \, \alpha$ est une bijection de l'ensemble des $E$-normes sur $W$   autoduales relativement \`a $\Phi$ et l'ensemble
des $F$-normes $\gamma$ sur $W$  autoduales relativement \`a $f$ et v\'erifiant,
pour $x \in W$ et $\xi \in E$ :
$\gamma (\xi x)=    \tfrac 1 e \,  v_{E} (\xi) + \gamma(x)$.
\end{Lemma}

\begin{proof}  Si l'on supprime de part et d'autre la condition d'autodualit\'e l'\'enonc\'e est imm\'ediat. Examinons les conditions d'autodualit\'e en termes des fonctions de r\'eseaux associ\'ees.  Si $L$ est un $\oE$-r\'eseau de $W$,  son dual $L^\ast$  relativement \`a $f$ et son dual $$L^\sharp = \{x \in W / \  \Phi(x, L) \subset \pE \}$$ relativement \`a $\Phi$ co\"\i ncident. Si $\alpha$ est une norme autoduale relativement \`a $\Phi$,
on v\'erifie que $\lambda_\alpha^\sharp (er) = \lambda_{\tfrac 1 e \alpha}^\ast (r)$. On en d\'eduit  : $\lambda_\alpha^\sharp =  \lambda_\alpha \iff
\lambda_{\tfrac 1 e \alpha}^\ast =  \lambda_{\tfrac 1 e \alpha} $, d'o\`u l'\'enonc\'e.
\end{proof}

Rappelons (\cite[\S 1.9]{BT0}) que le  volume d'un r\'eseau $L$ d'un $F$-espace vectoriel de dimension finie  $W$ est d\'efini moyennant le  choix d'un r\'eseau particulier $L_0$ de $W$
par $\vol(L)= v_F(\det g)$ o\`u $g \in GL_F(W)$ v\'erifie $g(L_0)=L$.
Le {\it volume d'une norme} $\alpha$ sur $W$ est alors d\'efini au moyen d'une base scindante
$(e_i )_{i \in I}$ de $\alpha$ et du $\oF$-r\'eseau $L$ engendr\'e par cette base par
$$
\vol \alpha = \vol L - \sum_{i \in I} \alpha(e_i).
$$
Cette quantit\'e est ind\'ependante du choix d'une base scindante.

\begin{Proposition}\label{volume0sl}
Soit $\alpha$ une norme d'alg\`ebre autoduale de $V$ scind\'ee par $V = V^0 \perp W$ o\`u
$V^0$ est de dimension $2$ et d\'eploy\'ee, d'idempotents orthogonaux $e^+$ et $e^-$.
Soit $W = W^+ \oplus W^-$ la polarisation compl\`ete de $W$ associ\'ee :  $W^+ = e^+ W$ et $W^- = e^- W$.
\begin{enumerate}
    \item La restriction de $\alpha$ \`a $W$ est scind\'ee par $W = W^+ \oplus W^-$,  donc de la forme $\alpha^+ \wedge \alpha^- $ avec $\alpha^- = (\alpha^+)^\sharp$ (cf \ref{150}).
    \item Pour toute base
       $(f_1^+, f_2^+, f_3^+)$ de $W^+$ on note  $(f_1^-, f_2^-, f_3^-)$
    la base duale de $W^-$. Alors $f_1^- f_2^-$ est un multiple non nul de $f_3^+$.
    Appelons {\em base sp\'eciale} de $W^+$ toute base $(f_1^+, f_2^+, f_3^+)$ telle que
    $f_3^+ = f_1^- f_2^-$. Les r\'eseaux engendr\'es par les bases sp\'eciales de $W^+$ ont tous m\^eme volume.
    \item On normalise le volume de sorte que les r\'eseaux engendr\'es par les bases sp\'eciales de $W^+$ soient de volume nul. Alors la restriction de $\alpha$ \`a $W^+$ est de volume nul.
    \item R\'eciproquement, soit $\alpha^+$ une norme de $W^+$ de volume nul pour la normalisation ci-dessus, soit $\alpha^- = (\alpha^+)^\sharp$, norme de $W^-$, et soit    $\alpha_0$ l'unique norme
    d'alg\`ebre autoduale de $V^0$. Alors
    $\alpha_0 \wedge \alpha^+ \wedge \alpha^-$ est une norme d'alg\`ebre autoduale de $V$, 
    l'unique telle norme scind\'ee par   $V = V^0 \perp W$ et prolongeant $\alpha^+$.
\end{enumerate}
\end{Proposition}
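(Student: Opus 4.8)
Le plan est de d\'emontrer les quatre assertions successivement ; l'essentiel du travail sera la description de la structure multiplicative de $V$ relative \`a la d\'ecomposition de Peirce $V = F e^+ \oplus W^+ \oplus W^- \oplus F e^-$ de l'idempotent $e^+$. Pour (i), on \'ecrit $w = e^+ w + e^- w \in W^+ \oplus W^-$ pour $w \in W$ ; comme $\alpha$ est une norme d'alg\`ebre et que $\alpha(e^+) = \alpha(e^-) = 0$ d'apr\`es le lemme \ref{151}, on a $\alpha(e^\pm w) \ge \alpha(e^\pm) + \alpha(w) = \alpha(w)$, d'o\`u $\alpha(w) = \min(\alpha(e^+ w), \alpha(e^- w))$ par l'in\'egalit\'e ultram\'etrique : $\alpha_{|W}$ est scind\'ee par $W = W^+ \oplus W^-$. \'Etant autoduale (lemme \ref{151}) et $W^\pm$ \'etant totalement isotropes en dualit\'e, elle est de la forme $\alpha^+ \wedge \alpha^-$ avec $\alpha^- = (\alpha^+)^\sharp$, d'apr\`es le crit\`ere rappel\'e juste avant \ref{150}.

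Pour (ii), on utilise d'une part les inclusions $W^\pm W^\pm \subseteq W^\mp$ (relations de Peirce, \S\ref{composition}), d'autre part le fait que, $\bar x = -x$ pour $x \in W$, la forme trilin\'eaire $(u,v,w) \mapsto f(uv,w)$ sur $W^-$ (et de m\^eme sur $W^+$) est altern\'ee, via l'identit\'e $f(xy,z) = f(y,\bar x z)$. D\`es lors $f_1^- f_2^-$, qui appartient \`a $W^+$ et est orthogonal \`a $f_1^-$ et \`a $f_2^-$, est un multiple $c\, f_3^+$ de $f_3^+$, o\`u $c = f(f_1^- f_2^-, f_3^-)$ est non nul puisque cette forme altern\'ee, non identiquement nulle (calcul direct dans $V$, par exemple via le mod\`ele des matrices de Zorn, cf. \cite{SV}), ne s'annule pas sur une base. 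En rempla\c cant $f_3^+$ par $c\, f_3^+$ on obtient une base sp\'eciale, ce qui prouve leur existence ; et pour une base sp\'eciale le caract\`ere altern\'e force les constantes de structure du produit $W^- \times W^- \to W^+$ dans la base $(f_i^-)$ \`a valoir toutes $1$, de sorte que ce produit co\"\i ncide avec le produit vectoriel dans les coordonn\'ees de $(f_i^+)$. Si $(g_i^+)$, de matrice $P \in \GL(3,F)$ relativement \`a $(f_i^+)$, est une autre base sp\'eciale, le calcul de sa base duale et du produit vectoriel montre que $g_1^- g_2^- = (\det P)^{-1} g_3^+$ ; la sp\'ecialit\'e impose alors $\det P = 1$, et deux bases sp\'eciales engendrent des r\'eseaux de m\^eme volume.

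Pour (iii), on part d'une base scindante de $\alpha^+$, qu'on peut supposer sp\'eciale (multiplier un vecteur par un scalaire pr\'eserve le caract\`ere scindant) ; sa base duale $(f_i^-)$ scinde $\alpha^- = (\alpha^+)^\sharp$ avec $\alpha^-(f_i^-) = -\alpha^+(f_i^+)$. L'\'egalit\'e $f_3^+ = f_1^- f_2^-$ et l'in\'egalit\'e de norme d'alg\`ebre donnent $\alpha^+(f_3^+) \ge \alpha^-(f_1^-) + \alpha^-(f_2^-) = -\alpha^+(f_1^+) - \alpha^+(f_2^+)$, soit $\sum_i \alpha^+(f_i^+) \ge 0$ ; comme un calcul direct dans $V$ montre que $f_1^+ f_2^+$ est, pour une base sp\'eciale, un multiple unitaire de $f_3^-$, le m\^eme raisonnement appliqu\'e \`a $\alpha^-(f_3^-) \ge \alpha^+(f_1^+) + \alpha^+(f_2^+)$ fournit $\sum_i \alpha^+(f_i^+) \le 0$. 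Ainsi $\sum_i \alpha^+(f_i^+) = 0$, et $\vol \alpha^+ = \vol L^+ - \sum_i \alpha^+(f_i^+) = 0$, o\`u $L^+$ est le r\'eseau engendr\'e par la base sp\'eciale $(f_i^+)$, de volume nul par la normalisation de (ii).

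Pour (iv), l'unicit\'e est imm\'ediate : une norme d'alg\`ebre autoduale de $V$, scind\'ee par $V = V^0 \perp W$ et prolongeant $\alpha^+$ (au sens o\`u sa restriction \`a $W^+$ est $\alpha^+$), a pour restriction \`a $V^0$ la norme $\alpha_0$ (lemme \ref{151}) et, par (i), pour restriction \`a $W$ la norme $\alpha^+ \wedge (\alpha^+)^\sharp = \alpha^+ \wedge \alpha^-$ ; elle vaut donc $\alpha_0 \wedge \alpha^+ \wedge \alpha^-$. Reste \`a voir que $\gamma := \alpha_0 \wedge \alpha^+ \wedge \alpha^-$ est une norme d'alg\`ebre autoduale. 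L'autodualit\'e vient des rappels : $\alpha_0$ et $\alpha^+ \wedge \alpha^-$ le sont (puisque $\alpha^- = (\alpha^+)^\sharp$), et la somme orthogonale de deux normes autoduales l'est. Pour l'in\'egalit\'e $\gamma(xy) \ge \gamma(x) + \gamma(y)$, comme $\gamma(u+v) \ge \min(\gamma(u),\gamma(v))$ et $\gamma(x) \le \gamma(x')$ pour toute composante $x'$ de $x$ dans $V^0 \oplus W^+ \oplus W^-$, il suffit de la v\'erifier lorsque $x$ et $y$ appartiennent chacun \`a l'un des trois facteurs. Les cas $V^0 V^0 \subseteq V^0$ ($\alpha_0$ norme d'alg\`ebre), $V^0 W^\pm$ et $W^\pm V^0$ ($V^0$ n'agissant sur $W^+$ resp. $W^-$ que par un idempotent, l'in\'egalit\'e suit de la description de $\alpha_0$ au lemme \ref{151}), $W^+ W^-$ et $W^- W^+ \subseteq V^0$ ($w^+ w^-$ et $w^- w^+$ \'etant des multiples de $e^+$ resp. $e^-$ de valuation $v_F(f(w^+,w^-))$, l'in\'egalit\'e $v_F(f(w^+,w^-)) \ge \alpha^+(w^+) + \alpha^-(w^-)$ r\'esultant de la d\'efinition \ref{150} de $\sharp$) ne posent aucune difficult\'e. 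Restent $W^- W^- \subseteq W^+$, et sym\'etriquement $W^+ W^+ \subseteq W^-$ : avec $(f_i^+)$ base scindante sp\'eciale de $\alpha^+$ et $(f_i^-)$ sa duale, (ii) donne pour $w^- = \sum_i a_i f_i^-$ et $w'^- = \sum_i b_i f_i^-$ l'\'egalit\'e $w^- w'^- = (a_2 b_3 - a_3 b_2) f_1^+ + (a_3 b_1 - a_1 b_3) f_2^+ + (a_1 b_2 - a_2 b_1) f_3^+$ ; comme $\alpha^+(f_1^+) + \alpha^+(f_2^+) + \alpha^+(f_3^+) = 0$ (cons\'equence de $\vol \alpha^+ = 0$) et $v_F(a_1 b_2 - a_2 b_1) \ge \min(v_F(a_1) + v_F(b_2),\, v_F(a_2) + v_F(b_1))$, le terme en $f_3^+$ v\'erifie $v_F(a_1 b_2 - a_2 b_1) + \alpha^+(f_3^+) \ge \alpha^-(w^-) + \alpha^-(w'^-)$ ; de m\^eme pour les termes en $f_1^+$ et $f_2^+$, d'o\`u $\alpha^+(w^- w'^-) \ge \alpha^-(w^-) + \alpha^-(w'^-)$. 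Le point d\'elicat est l'analyse pr\'ecise de la structure multiplicative reliant $W^+$ et $W^-$ (constantes de structure unitaires pour une base sp\'eciale), puis, en (iv), le passage de l'annulation du volume \`a l'in\'egalit\'e de norme d'alg\`ebre pour les produits $W^\pm W^\pm$ ; les autres v\'erifications sont de routine.
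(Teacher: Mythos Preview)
Votre d\'emonstration est correcte et suit essentiellement la m\^eme d\'emarche que celle de l'article : d\'ecomposition de Peirce pour (i), forme trilin\'eaire altern\'ee et bases sp\'eciales pour (ii)--(iii), v\'erification de l'in\'egalit\'e multiplicative pour (iv). Les seules diff\'erences sont de pr\'esentation : en (ii) votre argument via $\det P = 1$ est l\'eg\`erement plus direct que la construction explicite d'un \'el\'ement de $\SL(3,F)$ faite dans l'article, et en (iv) vous proc\'edez par analyse cas par cas sur les composantes de Peirce l\`a o\`u l'article invoque \cite[Lemma 2.1]{GY} pour se ramener aux vecteurs d'une base scindante.
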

\begin{proof}
(i) Soit $w \in W$, il s'\'ecrit $w= e^+ w + e^- w$ avec $e^+ w \in W^+$ et $e^- w \in W^-$.
On a $  \  \alpha (e^+ w) \ge \alpha(e^+) + \alpha (w)= \alpha(w) \  $ (Lemme \ref{151}) et de m\^eme pour $\alpha (e^- w)$,  donc :
$\alpha (w) \ge \min (\alpha (e^+ w), \alpha (e^- w)) \ge \alpha(w) \ $ d'o\`u l'\'egalit\'e.

(ii) Pour $i=1,2,3$,  l'\'el\'ement $a_i= f_i^+ + f_i^-$ est de norme $1$, on peut donc utiliser le proc\'ed\'e de doublement (Proposition \ref{double}) pour calculer dans $V^0 \perp V^0 a_i$
(qui est une alg\`ebre associative) les produits suivants :
$f_i^+ f_i^- = -e^+$, $f_i^- f_i^+ = -e^-$, $f_i^+ f_i^+ = f_i^- f_i^- = 0$. Par ailleurs on v\'erifie que pour $x,y \in W^+$ (resp. $W^-$) le produit $xy$ appartient \`a $W^-$ (resp. $W^+$). On en d\'eduit imm\'ediatement que $f_1^- f_2^-$ est un \'el\'ement de $W^+$ orthogonal \`a $f_1^-$ et $f_2^-$. Toujours par doublement, l'espace engendr\'e par
$f_2^+, f_2^-, f_3^+, f_3^-$ est l'orthogonal de $V^0 \perp V^0 a_1$,  \'egal \`a
$(V^0 \perp V^0 a_1) a_2$. De fait $ \ f_1^- f_2^- = (e^- a_1) (e^- a_2) = (e^-  e^- a_1) a_2 = f_1^- a_2 \ $ est non nul.

Posons $f_1^- f_2^- = \mu f_3^+   $ avec $\mu \in F^\times$ et soit $v$ le volume du r\'eseau de base $(f_1^+, f_2^+, f_3^+)$. Le r\'eseau engendr\'e par la base sp\'eciale $(f_1^+, f_2^+, f_1^- f_2^-)$ a pour volume $v + \vF(\mu)$. Un r\'eseau arbitraire est image par $g \in \GL(3,F)$ du premier et d\'etermine une base sp\'eciale $(gf_1^+, g f_2^+, ( ^t g^{-1}f_1^-)
( ^t g^{-1}f_2^-) )$  engendrant un r\'eseau de volume
$v + \vF(\det g) + \vF(\mu')$ avec $\mu'= f(  ^t g^{-1}f_3^- , (^t g^{-1}f_1^-)( ^t g^{-1}f_2^-))$.

Posons $d = \det g^{-1}$ ; alors   $g'= g \left(\begin{smallmatrix} 1&0&0 \cr 0&1&0\cr 0&0& d \end{smallmatrix}\right)$ appartient \`a $\SL(3,F)$ et d\'etermine un \'el\'ement $\tilde g$ de $G$ valant l'identit\'e sur $V^0$, $g'$ sur $W^+$ et $ ^t g'^{-1}$ sur $W^-$. Ainsi :

$$\mu' = d \,   f(\tilde g (f_3^-), \tilde g (f_1^-)\tilde g (f_2^-) )
=   d \,  f(\tilde g (f_3^-), \tilde g (f_1^- f_2^-) ) =d \,  \mu .$$

(iii) Partant d'une base $(f_1^+, f_2^+, f_3^+)$ scindant $\alpha$, la base sp\'eciale
$(f_1^+, f_2^+, f_1^- f_2^-)$ scinde $\alpha$ et
$\vol \alpha^+ = -\alpha^+(f_1^+) - \alpha^+(f_2^+) - \alpha^+(f_1^- f_2^-) $.
Rappelons  que $\alpha^+(f_i^+) = - \alpha^-(f_i^-)$ \cite[(18)]{BT0}. Comme
  la base duale de $(f_1^+, f_2^+, f_1^- f_2^-)$ est $(f_1^-, f_2^-, f_1^+ f_2^+)$, on a
$$\alpha^+(f_1^- f_2^-) = - \alpha^-(f_1^+ f_2^+) \le -\alpha(f_1^+) -\alpha(f_2^+) ,$$
soit $\vol \alpha^+ \ge 0$.
  Echangeons les r\^oles de $W^+$ et $W^-$ : on obtient $$ -\alpha^-(f_1^-) - \alpha^-(f_2^-) - \alpha^-(f_1^+ f_2^+) \ge 0,$$  c'est-\`a-dire
$\vol \alpha^+ \le 0$, c.q.f.d.

(iv)  Bien s\^ur     $\alpha = \alpha_0 \wedge \alpha^+ \wedge \alpha^-$ est une norme   autoduale de $V$ ; il s'agit de montrer que c'est une norme d'alg\`ebre. Soit $(f_1^+, f_2^+, f_1^- f_2^-)$
une  base sp\'eciale de $W^+$ scindant $\alpha^+$. D'apr\`es \cite[Lemma 2.1]{GY}
il suffit de montrer que la propri\'et\'e $\alpha (xy) \ge \alpha(x) + \alpha(y)$ est valide lorsque $x$ et $y$ appartiennent \`a la base $(e^+, e^-, f_1^+, f_2^+, f_1^- f_2^-,
f_1^-, f_2^-, f_1^+ f_2^+)$ de $V$. C'est une v\'erification facile en utilisant les propri\'et\'es d\'ej\`a mentionn\'ees et le proc\'ed\'e de doublement, qui permet d'\'etablir que les produits $f_i^+ f_j^-$ et $f_i^- f_j^+$ sont nuls pour $i \ne j$ et que l'image d'une base sp\'eciale par permutation circulaire reste sp\'eciale.
\end{proof}

 {\begin{Proposition}\label{volume0su}
Soit $\alpha$ une norme d'alg\`ebre autoduale de $V$ scind\'ee par $V = V^0 \perp W$ o\`u
$V^0$ est une extension quadratique  $F'$ de $F$, d'indice de ramification $e$  ; notons $\alpha_W$ sa restriction \`a $W$.
Soit $\Phi$ la forme hermitienne sur $V$ caract\'eris\'ee par $f = \tr_{F'/F} \circ \Phi$.
\begin{enumerate}
\item Pour toute base orthogonale       $(f_1 , f_2 , f_3 )$ de $W$ sur $F'$, $ f_1  f_2   $ est un multiple non nul de $f_3 $. 
    Appelons {\em base sp\'eciale} de $W $ toute base de Witt $(e_{-1}, e_{ 0} , e_{1} )$ sur $F'$ telle que   $Q(e_{-1})=Q(e_{ 1})=0$, $\Phi(e_{-1}, e_1)=1$, $e_{0} = (e_{-1}+ e_{ 1})(e_{-1}-e_{ 1})$. 
    Les $ \mathfrak o_{F'}$-r\'eseaux engendr\'es par les bases sp\'eciales de $W $ ont tous m\^eme volume et si l'on 
    normalise le volume de sorte que les $ \mathfrak o_{F'}$-r\'eseaux engendr\'es par les bases sp\'eciales de $W $ soient de volume nul, alors les $F'$-normes autoduales de $W$ sont de volume nul.

    \item  La norme   $ \ e \alpha_W \  $ est une $F'$-norme autoduale   sur $W$.
    \item R\'eciproquement, soit $\alpha'$ une $F'$-norme autoduale  de $W $  et soit    $\alpha_0$ l'unique norme
    d'alg\`ebre autoduale de $V^0$. Alors
    $\alpha_0 \wedge \frac 1 e \alpha'$ est une norme d'alg\`ebre autoduale de $V$, l'unique telle norme scind\'ee par   $V = V^0 \perp W$ et prolongeant  $\frac 1 e \alpha'$. 
\end{enumerate}
\end{Proposition}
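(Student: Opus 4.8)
This proposition is the unitary analogue of Proposition \ref{volume0sl}, and the plan is to imitate that proof, the one genuinely new tool being Lemma \ref{152}. Here $W$ is a three-dimensional Hermitian space over $F'=V^0$ of Witt index $1$ (Lemma \ref{dim2}(ii) and \S\ref{composition}), and Lemma \ref{152} lets one translate between $F$-norms on $W$ self-dual for $f$ and $F'$-norms self-dual for $\Phi$, at the cost of the ramification factor $e$. A second recurring ingredient is that the fixator of $V^0$ in $\oG$ is $\SU(\Phi,W)\simeq\SU(2,1)(F'/F)$, acting by algebra automorphisms of $V$.

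The backbone computation is the scaling identity $\alpha_W(\xi x)=\tfrac1e\,v_{F'}(\xi)+\alpha_W(x)$ for $\xi\in F'$ and $x\in W$. Indeed $\alpha_{|V^0}=\tfrac12\,v_F\circ Q$ by Lemma \ref{151}, so $\alpha_{|V^0}(\xi)=\tfrac12\,v_F(\mathrm N_{F'/F}(\xi))=\tfrac1e\,v_{F'}(\xi)$ (since $v_F(\mathrm N_{F'/F}(\xi))=\tfrac2e\,v_{F'}(\xi)$); as $\alpha$ is an algebra norm and $W$ is stable under multiplication by $V^0$, one gets $\alpha(\xi x)\ge\alpha_{|V^0}(\xi)+\alpha_W(x)$, and applying the same inequality to $\xi^{-1}$ and $\xi x$ (the $F'$-action on $W$ being associative because $F'$ is commutative, cf. the remark after Proposition \ref{double}) gives the reverse inequality, whence equality. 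Combined with the fact, also from Lemma \ref{151}, that $\alpha_W$ is self-dual for $f$, Lemma \ref{152} shows that $e\,\alpha_W$ is $F'$-self-dual for $\Phi$, which is (ii). The product identity in (i) is then elementary: an $F'$-orthogonal basis $(f_1,f_2,f_3)$ of $W$ consists of anisotropic vectors (otherwise some $f_i$ would be orthogonal to all of $W$), and the octonion identities $\bar w=-w$, $w\bar w=Q(w)\un$, $f(xy,z)=f(y,\bar x z)=f(x,z\bar y)$ show that $f_1f_2$ lies in $W$ and is $\Phi$-orthogonal to $f_1$ and to $f_2$, hence in the $F'$-line $F'f_3$; it is nonzero since $Q(f_1f_2)=Q(f_1)Q(f_2)\ne 0$.

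For (i)(b), a special basis is entirely determined by the hyperbolic pair $(e_{-1},e_1)$ with $\Phi(e_{-1},e_1)=1$; then $e_0$ is forced and $Q(e_0)=-4$ is a unit. By Witt's theorem $\U(\Phi,W)$ acts transitively on such pairs, and so does $\SU(\Phi,W)$ (the stabiliser of a pair, a copy of $\U(1)(F'/F)$, maps onto the image of the $F'$-determinant of $\U(\Phi,W)$). An element of $\SU(\Phi,W)$, being an algebra automorphism of $V$, carries a special basis to a special basis and, lying in $\SL_{F'}(W)$, preserves the $F'$-volume of $\mathfrak o_{F'}$-lattices; hence all special lattices share a common volume, which we normalise to $0$. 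For (i)(c), given an $F'$-self-dual norm $\alpha'$, take a Witt scinding basis of $\alpha'$ (coming from the scinding decomposition of $W$ into a hyperbolic plane orthogonal to an anisotropic line, standard for self-dual norms, see \cite{BT}), rescale so that $\Phi(e_{-1},e_1)=1$, and replace the middle vector inside its unchanged $F'$-line by $e_0=(e_{-1}+e_1)(e_{-1}-e_1)$: this is again a scinding basis, now special. Then $\alpha'(e_{-1})+\alpha'(e_1)=0$ by self-duality on the hyperbolic plane, and $\alpha'(e_0)=\tfrac12\,v_{F'}(\Phi(e_0,e_0))=\tfrac12\,v_{F'}(-4)=0$, so $\vol\alpha'=\vol\big(\mathfrak o_{F'}e_{-1}\oplus\mathfrak o_{F'}e_0\oplus\mathfrak o_{F'}e_1\big)-\textstyle\sum_i\alpha'(e_i)=0$.

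For the converse (iii): $\alpha_0\wedge\tfrac1e\alpha'$ is self-dual for $f$ on $V=V^0\perp W$, since $\tfrac1e\alpha'$ is (Lemma \ref{152}) and $\alpha_0$ is; that it is an algebra norm is checked as in Proposition \ref{volume0sl}(iv): by \cite[Lemma 2.1]{GY} it suffices to verify $\alpha(xy)\ge\alpha(x)+\alpha(y)$ on an $F$-scinding basis of $V$, for which one refines to an $F$-basis a special basis $(e_{-1},e_0,e_1)$ of $W$ scinding $\alpha'$ (as in (i)(c)) together with an $F$-scinding basis of $V^0$, and computes $e_{\pm1}^2=0$, $e_{-1}e_1$, $e_1e_{-1}$, $e_{\pm1}e_0$ and $\xi e_{\pm1}$ by the doubling procedure of Proposition \ref{double}, as in Lemma \ref{idempotents} --- a routine finite check. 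Uniqueness is as in the split case: such a norm is the wedge of its restrictions, and its restriction to $V^0$ is forced to be $\alpha_0$ by Lemma \ref{151}. The step I expect to demand the most care is purely bookkeeping: keeping the factor $e$ consistent across Lemma \ref{152}, the identity $v_F\circ\mathrm N_{F'/F}=\tfrac2e\,v_{F'}$, the normalisation of $F'$-volumes, and the vanishing $\alpha'(e_0)=0$; it is precisely this alignment, via $\alpha'(c\,e_0)=v_{F'}(c)+\alpha'(e_0)$ and $v_{F'}(\mathrm N_{F'/F}(c))=2\,v_{F'}(c)$ when one changes the middle vector of a special basis, that makes the volume-zero assertion in (i) work out.
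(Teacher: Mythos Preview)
Your proof is correct and follows the same overall architecture as the paper's: part (ii) via the two-sided algebra-norm inequality and Lemma~\ref{152}, part (iii) by checking the algebra-norm inequality on a scinding $F$-basis built from a special basis of $W$ together with a basis of $V^0$, and part (i)(a) via the standard octonion identities (the paper phrases this as the doubling procedure).

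The one genuine difference is in (i). For the constancy of the volume on special bases, you argue by transitivity: $\SU(\Phi,W)$ acts transitively on hyperbolic pairs $(e_{-1},e_1)$ with $\Phi(e_{-1},e_1)=1$, and since $\SU(\Phi,W)\subset\oG$ consists of algebra automorphisms it carries the whole special basis to a special basis, while $\SU\subset\SL_{F'}(W)$ preserves $F'$-volumes. The paper instead rewrites the special lattice as the lattice of the orthogonal basis $(e_{-1}+e_1,\,e_{-1}-e_1,\,e_0)$, observes that such bases satisfy $Q(f_1)Q(f_2)Q(f_3)=1$, and reduces the volume comparison to a direct $2\times 2$ determinant computation. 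For the vanishing of the volume of a self-dual $\alpha'$, you compute directly that $\alpha'(e_{-1})+\alpha'(e_1)=0$ and $\alpha'(e_0)=0$; the paper instead argues that the dual basis $(e_1,e_{-1},te_0)$ with $t\in\mathfrak o_{F'}^\times$ spans the same lattice, so $\vol(\alpha'^\ast)=-\vol\alpha'$, whence $\vol\alpha'=0$ by self-duality. Your route is arguably more conceptual for the first point (it exploits $\SU\subset\oG$ directly) and more concrete for the second; the paper's route avoids invoking Witt transitivity and the determinant-surjectivity step.
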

\begin{proof}
(i) Le proc\'ed\'e de doublement (\ref{double}) donne
$W = F' f_1 \perp (F' \perp F' f_1) f_2$, donc $(F' f_1) f_2= F' f_3$ (de sorte que 
le d\'eterminant de $\Phi_{|W}$ est  une norme de $F'$ sur $F$). 
Le r\'eseau engendr\'e par la base sp\'eciale $(e_{-1}, e_{ 1} , e_{0} )$ est identique au
r\'eseau engendr\'e par la base orthogonale $(e_{-1}+ e_{ 1},e_{-1}-e_{ 1} , e_{0} )$
($p\ne 2$). Il suffit donc de   montrer que le volume est constant sur l'ensemble des $ \mathfrak o_{F'}$-r\'eseaux engendr\'es par des bases orthogonales $(f_1 , f_2 , f_3 )$ telles que $Q(f_1) Q(f_2) Q(f_3) = 1$. Soient $(f_1 , f_2 , f_3 )$ et $(f'_1 , f'_2 , f'_3 )$ deux telles bases. Quitte \`a permuter les vecteurs et \`a multiplier le premier par $a \in F'^\times$ et le second par $a^{-1}$, on peut supposer $Q(f_1)=Q(f'_1)=1$. Il y a alors une isom\'etrie de $W$ envoyant $f_1$ sur $f'_1$, ce qui permet de se ramener \`a la question analogue en dimension $2$. Soit alors $g \in GL(2, F')$ envoyant $f_2$, $f_3$ sur $f'_2$, $f'_3$. En \'ecrivant les conditions $\Phi(f'_2, f'_3)= 0$ et $\Phi(f'_2, f'_2) \Phi(f'_3, f'_3)=1$ on obtient imm\'ediatement $\det g^2 = 1$.

 Une norme autoduale $\alpha'$ sur $W$ a  m\^eme volume que sa norme duale. Or elle est scind\'ee par une base de Witt, donc aussi par une base sp\'eciale  $(e_{-1}, e_{ 1} , e_{0} )$, 
 dont
la base duale  $(e_{ 1}, e_{- 1} , t e_{0} )$ ($t \in \mathfrak o_{F'}$) engendre le m\^eme
r\'eseau. La norme duale v\'erifie $( \alpha')^\ast (e_i^\ast) = -   \alpha'(e_i)$, son volume est donc l'oppos\'e de celui de $  \alpha'$, d'o\`u la nullit\'e.

(ii) Pour $u \in F^{\prime\times}$ et $w \in W$ on a
$\alpha(u) + \alpha(w) \le \alpha (uw) \le -\alpha(u^{-1}) + \alpha(w)$. On en d\'eduit
par le lemme  \ref{151} que $\alpha (uw) =  \dfrac 1 2 \,  v_F \circ Q(u) + \alpha(w)$,
puis par le lemme \ref{152}   que
 $ e \alpha_W$ est une $F'$-norme autoduale   sur $W$. 

(iii) Soit $(\un, \epsilon)$ une base de $V^0$ scindante pour $\alpha_0$ et   $(e_{-1}, e_{ 1} , e_{0} )$ une base sp\'eciale de $W$ scindant $\alpha'$. Posons $\alpha =\alpha_0 \wedge \frac 1 e \alpha'$.  Il suffit d'\'etablir   $\alpha (xy) \ge \alpha(x) + \alpha(y)$   lorsque $x$ et $y$ appartiennent \`a la base $(\un, \epsilon, e_{-1} , \epsilon e_{-1},  e_{ 1}  , \epsilon e_{ 1} ,  e_{0}, \epsilon e_{0})$ de $V$ \cite[Lemma 2.1]{GY}. Ce sont des v\'erifications faciles, bas\'ees sur le lemme \ref{idempotents}, le fait que     $\alpha(\xi v)= \frac 1 e v_{F'} (\xi) + \alpha(v)$ ($\xi \in F'$, $v \in V$) et l'autodualit\'e de $\alpha'$, soit  :
$ 
\alpha'(e_{-1} ) + \alpha'(e_{ 1} ) = 0 , \quad \alpha'(e_{0}) = 0.
$ 
\end{proof}

{\begin{Proposition}\label{dimension4}
Soit 
$V^0$  une sous-alg\`  ebre de composition de dimension $4$ de $V$ et $W$ son orthogonal. 
\begin{enumerate}
 \item Si $V^0$ est anisotrope, alors $W$ l'est aussi et chacun poss\`ede une unique norme autoduale, soit    $\alpha_0= \frac 1 2 v_F\circ Q$ et  $\alpha_W= \frac 1 2 v_F\circ Q$  respectivement. La norme $\alpha_0 \wedge \alpha_W$ est une norme d'alg\`ebre autoduale de $V$. 
 La norme $\alpha_0$ est scind\'ee   par toute d\'ecomposition $V^0 = F' \perp F'^\perp$ o\`u $F'$ est un sous-corps de $V^0$ de degr\'e $2$ sur $F$. 

    \item  Si $V^0$ est d\'eploy\'ee, pour toute norme   autoduale $\alpha_W$  de $W$ 
    il existe une et une seule norme d'alg\`ebre autoduale $\alpha_0$ de $V^0$ telle que 
   $\alpha_0 \wedge \alpha_W$ soit une norme d'alg\`ebre autoduale de $V $.
   Si en outre il existe une extension quadratique $F'$ de $F$ dans $V^0$ telle que  $\alpha_W$ soit de la forme $\frac 1 e \alpha'$ pour une 
   $F'$-norme autoduale  $\alpha'$  de $W $, alors
    $e \alpha_0  $ est une $F'$-norme autoduale de $V^0$.
\end{enumerate}
\end{Proposition}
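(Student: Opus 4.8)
The statement concerns norme d'algèbre autoduales on $V$ when $V^0$ has dimension $4$, and it has two cases. The overall strategy in both cases is the same as in Propositions \ref{volume0sl} and \ref{volume0su}: exploit the fact that a scinded norme $\alpha = \alpha_0 \wedge \alpha_W$ on $V = V^0 \perp W$ is autoduale iff both restrictions are autoduales (recalled in \S\ref{strates.normes}), so that the only real content is \emph{which} pairs $(\alpha_0, \alpha_W)$ give a norme \emph{d'algèbre}, and here the doubling procedure (Proposition \ref{double}) is the computational engine: choosing $a \in W$ with $Q(a) \neq 0$ we have $V = V^0 \perp V^0 a$ with $W = V^0 a$, and the product is given explicitly by the formula of \ref{double}.

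\textbf{Case (i), $V^0$ anisotrope.} Since $Q$ is multiplicative and $Q|_{V^0}$ is anisotropic, $Q|_W$ is anisotropic too (if $Q(ya) = Q(y)Q(a) = 0$ with $y \in V^0$ then $y = 0$); so by Lemma \ref{151} (anisotropic case, applied to $V^0$, and the analogous elementary argument for $W$ viewed inside the octonion algebra, or directly since $\frac12 v_F \circ Q$ is maximinorante on any anisotropic space) both $V^0$ and $W$ carry the unique autoduale norme $\frac12 v_F \circ Q$. That $\alpha_0 \wedge \alpha_W = \frac12 v_F \circ Q$ is a norme d'algèbre on $V$ is then immediate from multiplicativity of $Q$: $\alpha(xy) = \frac12 v_F(Q(x)Q(y)) = \alpha(x) + \alpha(y)$, so the inequality is in fact an equality. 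Finally, for $F' \subset V^0$ of degree $2$, the restriction of $\frac12 v_F \circ Q$ to $F'$ is $\frac12 v_F \circ N_{F'/F}$, which is the unique autoduale norme of $F'$; since $F'^\perp$ inside $V^0$ is anisotropic and obtained from $F'$ by doubling ($V^0 = F' \perp F' b$ for suitable $b$), the same argument as in Lemma \ref{151} shows the restriction is scinded — one just checks both restrictions are autoduales and invokes the maximinorante criterion.

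\textbf{Case (ii), $V^0$ déployée.} Here $V^0 \cong M_2(F)$ with $Q = \det$. Given an autoduale norme $\alpha_W$ on $W$, I want a unique norme d'algèbre autoduale $\alpha_0$ on $V^0$ with $\alpha_0 \wedge \alpha_W$ a norme d'algèbre on $V$. Existence: I expect $\alpha_0$ to be forced by the same kind of "special basis" device as in \ref{volume0sl}(iv) and \ref{volume0su}(iii) — pick $a \in W$, write $W = V^0 a$, and for $x, y \in V^0$ the product $(v a)(v' a) = -Q(a)\bar{v'}v$ (doubling formula with the $D$-part zero) lies in $V^0$; requiring $\alpha((va)(v'a)) \geq \alpha(va) + \alpha(v'a)$, i.e. $\alpha_0(\bar{v'}v) \geq \alpha_W(va) + \alpha_W(v'a) - v_F(Q(a))$, together with the reverse-type inequalities coming from multiplying elements of $W$ by units of $V^0$ (as in \ref{volume0su}(ii)), pins down $\alpha_0$ on products $\bar{v'}v$, hence on all of $V^0$ since $V^0$ is a division-free algebra generated by such products; one then verifies $\alpha_0$ so defined is autoduale and that $\alpha_0 \wedge \alpha_W$ satisfies the algebra inequality on a well-chosen basis of $V$ via \cite[Lemma 2.1]{GY}, exactly as before. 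Uniqueness is clear from the forcing. For the last assertion: if $F' \subset V^0$ is a quadratic extension and $\alpha_W = \frac1e \alpha'$ with $\alpha'$ an $F'$-norme autoduale, then $\alpha = \alpha_0 \wedge \frac1e\alpha'$ is a norme d'algèbre, and — reasoning as in \ref{volume0su}(ii) with the roles of $V^0$ and $W$ interchanged, using $\alpha(uw) = \frac12 v_F \circ Q(u) + \alpha(w)$ for $u \in F'^\times$ and now deducing $\alpha(uv) = \frac1e v_{F'}(u) + \alpha(v)$ for $v \in V^0$ — one gets that $e\alpha_0$ is an $F'$-norme, autoduale by Lemma \ref{152}.

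\textbf{Main obstacle.} The routine verifications (doubling-formula products, checking the algebra inequality on a basis) are as in the previous two propositions; the genuinely new point is the \emph{existence and uniqueness} of $\alpha_0$ in case (ii) for an \emph{arbitrary} autoduale $\alpha_W$ on $W$ — unlike \ref{volume0sl} and \ref{volume0su} there is no volume-normalization shortcut, and $V^0 \cong M_2(F)$ is noncommutative, so one must check the forced formula for $\alpha_0$ on products $\bar{v'}v$ is consistent (independent of the decomposition of a given element of $V^0$ as such a product, and compatible with addition) and really does define a maximinorante norme. I expect this consistency check — essentially that the candidate $\alpha_0$ agrees with the norme induced on $V^0 \cong \mathrm{End}_F(W^+)$ by $\alpha_W$ through a polarization $W = W^+ \oplus W^-$ — to be the crux, with everything else following the established template.
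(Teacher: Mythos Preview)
Your treatment of case (i) is essentially the paper's, and is fine.

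For case (ii) your strategy diverges from the paper's and leaves the main step unfinished. You propose to \emph{define} $\alpha_0$ by forcing the inequalities $\alpha_0(\bar v' v) \ge \alpha_W(va)+\alpha_W(v'a)-v_F(Q(a))$ to be sharp, and you correctly flag the consistency of this procedure as the crux --- but you do not actually carry it out, and your hint that $\alpha_0$ should be ``the norme induced on $V^0 \cong \End_F(W^+)$ by $\alpha_W$ through a polarization'' is suggestive but unproven (and note $W$ here has dimension~$4$, not~$6$, so the identification needs care). As written, this is a gap.

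The paper sidesteps this entirely by \emph{reducing to the already-proved dimension-$2$ case}. From a Witt basis $(h,h',k,k')$ of $W$ scindante for $\alpha_W$, Lemma~\ref{idempotents} produces idempotents $e^\pm = e^\pm(Fh,Fh')$ and $c=e^+-e^-$ lying in $V^0$, together with $b=(k+k')(h-h') \in V^0$, so that $V^0 = F[c]\perp F[c]b$. A short computation (using $\alpha_0((e^+)^2)\ge 2\alpha_0(e^+)$ and $e^+=-hh'$, etc.) forces $\alpha_0(e^\pm)=0$ and $\alpha_0(e^+b)+\alpha_0(e^-b)=0$, so any candidate $\alpha$ is scind\'ee by $V = F[c]\perp F[c]^\perp$. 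Now $F[c]$ is a $2$-dimensional d\'eploy\'ee subalgebra, and Proposition~\ref{volume0sl} applies: $\alpha$ is a norme d'alg\`ebre autoduale iff a single volume-zero condition holds, namely $\alpha_0(e^+b)+\alpha_W(h)+\alpha_W(k)=0$, which determines $\alpha_0$ uniquely. This both proves existence and uniqueness in one stroke and avoids any consistency check on products in $M_2(F)$.

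For the final assertion the paper uses the same reduction trick in the hermitian direction: with $X^0 = F'^\perp \cap V^0$, the space $X = X^0 \perp W$ is $3$-dimensional hermitian over $F'$, and Proposition~\ref{volume0su}(iii) gives a unique norme d'alg\`ebre autoduale on $V$ scind\'ee by $F'\perp X$ and extending $\tfrac 1 e \alpha'$; by the uniqueness just proved it coincides with $\alpha_0\wedge\alpha_W$, whence $e\alpha_0$ is the $F'$-norme autoduale on $V^0$. Your proposed argument (``reverse the roles of $V^0$ and $W$ in \ref{volume0su}(ii)'') does not obviously go through, since $V^0$ is not a priori an $F'$-module in the same way $W$ is.
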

\begin{proof}
La premi\`ere assertion de  (i) est imm\'ediate via le lemme \ref{151}. Quant au scindage, il suffit de v\'erifier que les restrictions de $\alpha_0$ \`a $F'$ et \`a son orthogonal sont autoduales, ce qui est facile. 

Passons \`a (ii). La norme $\alpha_W$ est scind\'ee par une base de Witt $(h, h', k, k')$ avec $f(h,h')=f(k,k')=1$, les autres valeurs de $f$ sur cette base  \'etant nulles. Aux droites $L=Fh$ et $L'=Fh'$ on attache 
les \'el\'ements  $e^+=-hh' $, $ e^-= -h'h $ et $c=e^+-e^-$  du lemme \ref{idempotents}~; ils appartiennent \`a $V^0$ par doublement, puisque $W=V^0(h-h')$, ainsi que l'\'el\'ement $b= (k+k')(h-h')$.  En outre $b$ est de norme $-1$ et orthogonal \`a $c$,  de sorte que 
\begin{equation}\label{fc}
V^0 = F[c] \perp F[c]b = (Fe^+ \oplus F e^-) \perp (F e^+b \oplus F e^-b).
\end{equation}

Les droites $F (e^+b)(h-h')\subset e^- W$ et $ F (e^-b)(h-h')\subset e^+W$ sont les deux droites isotropes de l'orthogonal de $L \oplus L'$ dans $W$ et l'on a 
$f((e^+b)(h-h'),(e^-b)(h-h')) = 1$  ; quitte \`a \'echanger $k$ et $k'$ on peut  supposer 
que $(e^+b)(h-h')$ est multiple de $k'$ et on voit ais\'ement qu'on a alors \'egalit\'e : 
$(e^+b)(h-h')=k' $ et $(e^-b)(h-h')=k$. Alors : 
$$
 -h'k' = [e^-(h-h')][(e^+b)(h-h')] = \overline{(e^+b)}e^-= \bar b e^- e^-  = -b e^- $$
 et de m\^eme $hk = -b e^+$.  
 
Ces petits calculs vont nous permettre de montrer que si $\alpha_0$ est une norme d'alg\`ebre autoduale de $V^0$ telle que  $\alpha = \alpha_0 \wedge \alpha_W$ soit une norme d'alg\`ebre autoduale de $V $, alors elle est scind\'ee par les d\'ecompositions \ref{fc}. 
De fait :
\begin{itemize}
	\item $  \alpha_0 ( (e^+)^2) \ge 2 \alpha_0(e^+)$ entra\^ine $\alpha_0(e^+) \le 0$ mais 
$ 
\alpha_0 (e^+) = \alpha_0 (h h') \ge \alpha_W (h  ) + \alpha_W  ( h') 
$,  qui vaut $0$ par autodualit\'e. Donc $\alpha_0 (e^+)=0$ et de m\^eme $\alpha_0(e^-)=0$. 
\item Pour tout $v \in V$ on a donc $\alpha (v) = \min \{\alpha (e^+v), \alpha (e^- v)\}$
(cf. preuve de \ref{volume0sl} (i)). 
\item Enfin $(e^-b)(e^+b)= \overline{ e^+   }e^-= e^-$ donc 

$ 0 \ge  \alpha_0  (e^-b)+ \alpha_0 (e^+b) =  \alpha_0  (h'k')+ \alpha_0 (h k) 
\ge  \alpha_W  (h') + \alpha_W(k')+ \alpha_W (h ) + \alpha_W(k)$  

qui vaut $0$ par autodualit\'e, soit : $\alpha_0  (e^-b)+ \alpha_0 (e^+b)=0$. 
\end{itemize}

Ainsi ${\alpha_0}_{|F[c]}$ est l'unique norme d'alg\`ebre autoduale de $F[c]$ (lemme \ref{151} (i)) et 
${\alpha_0}_{|F[c]b}$ est autoduale, $\alpha_0$ est donc autoduale scind\'ee par 
$F[c] \perp F[c]b$ et $\alpha$   scind\'ee par $V =   F[c] \perp F[c]^\perp$. 
Il reste \`a utiliser la proposition \ref{volume0sl} : $\alpha $ est une norme d'alg\`ebre 
autoduale si et seulement si 
$\alpha_0(e^+b) + \alpha_W(h) + \alpha_W(k)=0$,  ce qui d\'etermine $\alpha_0$. 

Pour la derni\`ere assertion, il suffit de remarquer que, notant $X^0$ l'orthogonal de $F'$ dans $V^0$, l'espace $X = X^0 \perp W$ est hermitien de dimension $3$ sur $F'$, agissant par multiplication \`a gauche. Si $(h, h')$ est une base de Witt de $W$ sur $F'$ scindant $\alpha'$, 
l'\'el\'ement  $(h + h')(h-h')$ appartient \`a $X^0$ et $\alpha'$ se prolonge de fa\c con unique en une $F'$-norme autoduale de $X$, d'o\`u par la proposition \ref{volume0su} (iii) une unique norme d'alg\`ebre autoduale  $\alpha$ scind\'ee par $V = F' \perp X$ et prolongeant $\frac 1 e \alpha'$. Par unicit\'e c'est la m\^eme que la pr\'ec\'edente.  
 \end{proof}

Revenons pour terminer au langage des suites de r\'eseaux.
Soit  $\alpha$  une  norme  \`a valeurs rationnelles sur $V$ et $\lambda_\alpha$  la fonction de r\'eseaux associ\'ee. Soit $m_\alpha$ le p.p.c.m.  des d\'enominateurs des valeurs de $\alpha$ \'ecrites sous forme de fractions irr\'eductibles, c'est-\`a-dire
$$m_\alpha = \min \{ t \in \mathbb N / \alpha(V-\{0\}) \subseteq \dfrac 1 t \mathbb Z\}.$$
On d\'efinit une suite de r\'eseaux  $\Lambda_\alpha$  de p\'eriode minimale, par
$$\Lambda_\alpha(i)= \lambda_\alpha(\frac i m_\alpha)  \quad (i \in \mathbb Z). $$
 Si $\alpha$ est autoduale, la suite est autoduale et d'invariant $1$ : 
 $\Lambda_\alpha(i)^\ast =  \Lambda_\alpha(1-i)$, conform\'ement aux conventions de \cite{S4} et 
 \cite{S5}.

Il est \'el\'ementaire de v\'erifier que, dans la situation de la proposition
\ref{volume0sl} (iv) et en posant $\alpha = \alpha_0 \wedge \alpha^+ \wedge \alpha^-$,
on a pour tout $i \in \mathbb Z$ : $$\Lambda_\alpha(i) \cap W^+ = \Lambda_{\alpha^+}(i).$$
De m\^eme, dans la situation de la proposition
\ref{volume0su} (iii) et en posant $\alpha = \alpha_0 \wedge \frac 1 e \alpha'$,
on a pour tout $i \in \mathbb Z$ : $$\Lambda_\alpha(i) \cap W  = \Lambda_{\alpha'}(i),$$ 
suite autoduale pour la structure $F'$-hermitienne de $W$. Ainsi le passage par les normes r\'esout le probl\`eme habituel de normalisation des suites de r\'eseaux dans une sommation. Pour faciliter les r\'ef\'erences nous r\'esumons cette discussion   ci-dessous.

\begin{Notation}\label{Lambdachapeau1}
 Fixons une  sous-alg\`ebre de composition $D$ de dimension $2$ et hyperbolique de $V$.
 Pour toute suite de r\'eseaux $\Lambda$  de
 $ W_D^+ $ attach\'ee \`a une norme de volume nul,
 on note  $ \check\Lambda$ l'unique suite  de r\'eseaux de $V$   correspondant \`a une norme d'alg\`ebre autoduale de $V $ scind\'ee par
 $V = D \perp (W_D^+ \oplus W_D^-)$ et v\'erifiant  :
 $\forall i \in \mathbb Z$, $ \check\Lambda(i) \cap W_D^+  = \Lambda(i)$.
 \end{Notation}

 \begin{Notation}\label{Lambdachapeau2}
  Fixons une  sous-alg\`ebre de composition $D$ de dimension $2$ et anisotrope de $V$.
   Pour toute suite de ${\mathfrak o}_{D}$-r\'eseaux $\Lambda$  de
 $ D^\perp $ attach\'ee \`a une $D$-norme autoduale,
 on note  $ \vec\Lambda$ l'unique suite  de r\'eseaux de $V$ correspondant \`a une norme d'alg\`ebre autoduale de $V $ scind\'ee par
 $V = D \perp D^\perp $ et v\'erifiant  :
 $\forall i \in \mathbb Z$, $ \vec\Lambda(i) \cap D^\perp  = \Lambda(i)$.
 \end{Notation}

\begin{Notation}\label{Lambdachapeau3}
  Fixons une  sous-alg\`ebre de composition $D$ de dimension $4$   de $V$.
   Pour toute suite de  r\'eseaux $\Lambda$ de
 $ D^\perp $, autoduale et d'invariant $1$,  
 on note  $ \hat\Lambda$ l'unique suite  de r\'eseaux de $V$ correspondant \`a une norme d'alg\`ebre autoduale de $V $ scind\'ee par
 $V = D \perp D^\perp $ et v\'erifiant  :
 $\forall i \in \mathbb Z$, $ \hat\Lambda(i) \cap D^\perp  = \Lambda(i)$.
 \end{Notation}
 On notera que, d'apr\`es la d\'emonstration de la proposition \ref{dimension4}, les suites de r\'eseaux de \ref{Lambdachapeau3} sont toujours \'egalement de la forme \ref{Lambdachapeau1} ou 
 \ref{Lambdachapeau2}.  D'ailleurs, toute  suite  de r\'eseaux de $V$ attach\'ee  \`a une norme d'alg\`ebre autoduale   est de la forme $ \check\Lambda$ (\ref{Lambdachapeau1}) 
 par \cite[Proposition 8.1]{GY}. Toutefois c'est le point de vue ci-dessus qui nous sera utile dans ce qui suit, et pour commencer dans le lemme \ref{lemme28}. Il s'agit d'une version du lemme 2.8 de \cite{S5} pour le groupe $G_2(F)$~; remarquons que le lemme initial se g\'en\'eralise sans probl\`eme du cas des strates gauches \`a celui   des strates autoduales.

 \begin{Lemma}\label{lemme28}
 Soit $\beta$ un \'el\'ement semi-simple non nul de $\mathfrak{g}_2(F)$  dont on note  
 $V^0$ le noyau, d'orthogonal $W$.
 Soit $B_{\beta_W} $ le commutant  de $\beta_W$ dans $\mathfrak{gl}_F(W)$.  
Soient $\L$ et $\L'$ deux suites de r\'eseaux autoduales de $V$ correspondant \`a des points de l'immeuble de $G_2(F)$ et normalis\'ees par $F[\beta]^\times$. Supposons que  $\widetilde{\mathfrak A}_0(\L'_W) \cap B_{\beta_W} 
 \subseteq \widetilde{\mathfrak A}_0(\L_W) \cap B_{\beta_W} $.   Alors il existe  
  une suite de r\'eseaux autoduale $\L^{''}$ de $V$ correspondant \`a un  point de l'immeuble de $G_2(F)$ et normalis\'ee par $F[\beta]^\times$, telle que : 
  $$
 \widetilde{\mathfrak A}_0(\L^{''}_W)  \cap B_{\beta_W}  = \widetilde{\mathfrak A}_0(\L'_W)  \cap B_{\beta_W}  
  \text{ et }  \quad 
 \widetilde{\mathfrak A}_0(\L^{''}) \subseteq \widetilde{\mathfrak A}_0(\L). 
  $$
 \end{Lemma}

 \begin{proof}
 Par d\'efinition d'une strate semi-simple, 
  les suites de r\'eseaux $\L$ et  $\L'$ sont scind\'ees par la d\'ecomposition 
 $V = V^0 \perp W$  et le centralisateur de $\beta$ est produit de   $\mathfrak{gl}_F(V^0)$ 
 et  de son   intersection avec $\mathfrak{gl}_F(W)$.
 Distinguons le cas (i)~:  $V^0$  d\'eploy\'ee de dimension $2$, des autres cas~: 
  $V^0$  anisotrope ou d\'eploy\'ee de dimension $4$.
  Pour  le cas (i)  on  
 d\'ecompose davantage via 
 la polarisation compl\`ete   $W= W^+ \oplus W^-$  qui scinde 
 les suites de r\'eseaux donn\'ees~;  le centralisateur de $\beta_W$ est produit de  ses  intersections avec 
 $\mathfrak{gl}_F(W^+)$ et $\mathfrak{gl}_F(W^-)$ (lemme \ref{centralGL}), de sorte que :  
 $$
   \widetilde{\mathfrak A}_0(\L'_{W }) \cap B_{\beta_{W}} 
 \subseteq \widetilde{\mathfrak A}_0(\L_{W}) \cap B_{\beta_{W}}
 \iff     
  \widetilde{\mathfrak A}_0(\L'_{W^+}) \cap B_{\beta_{W^+}} 
 \subseteq \widetilde{\mathfrak A}_0(\L_{W^+}) \cap B_{\beta_{W^+}}.
 $$ 
 Appliquons alors \cite[Lemma 2.8]{S5} dans  $W^+$ (cas (i)) ou $W$   : il existe une suite de r\'eseaux $\L^\diamond_{W^+}$  de  $W^+$ (cas (i)) ou 
 $\L^\diamond_W$ de $W$, normalis\'ee par  $F[\beta_{W^+}]^\times$  (cas (i)) ou 
  $F[\beta_{W}]^\times$, telle que : 
  \begin{equation}\label{equation28}
  \begin{aligned}
   &\widetilde{\mathfrak A}_0(\L^\diamond_{W^+}) \cap B_{\beta_{W^+}} 
 = \widetilde{\mathfrak A}_0(\L'_{W^+}) \cap B_{\beta_{W^+}} 
 \quad  \text{ et }  \     \widetilde{\mathfrak A}_0(\L^\diamond_{W^+})  
 \subseteq  \widetilde{\mathfrak A}_0(\L_{W^+})  
 \quad  \text{ (cas (i))} 
 \\
  &\widetilde{\mathfrak A}_0(\L^\diamond_W) \cap B_{\beta_W} 
 = \widetilde{\mathfrak A}_0(\L'_W) \cap B_{\beta_W}
 \quad   \text{ et }  \    
  \widetilde{\mathfrak A}_0(\L^\diamond_W) 
 \subseteq \widetilde{\mathfrak A}_0(\L_W)  
 \quad   \text{ (autres cas)} .
 \end{aligned}
  \end{equation} 
Dans le cas (i) on peut translater pour que la norme associ\'ee \`a 
$\L^\diamond_{W^+}$ soit de volume nul 
en base sp\'eciale, cela ne change pas 
$ \widetilde{\mathfrak A}_0(\L^\diamond_{W^+}) $. 
Dans les autres cas les suites  $\L_W$ et $\L'_W$ sont des suites de $V^0$-r\'eseaux autoduales pour une structure hermitienne ou orthogonale   et par {\it loc. cit.} on peut choisir 
$\L^\diamond_W$ de m\^eme.

  Tout choix d'une telle suite  $\L^\diamond_{W^+}$   ou 
 $\L^\diamond_W$  d\'etermine une unique suite de r\'eseaux $\L^\diamond$ de $V$ corres\-pondant \`a un point de l'immeuble de $\G2(F)$, normalis\'ee par $F[\beta]^\times$~: c'est le proc\'ed\'e   canonique qu'on a d\'evelopp\'e plus haut,  
  qui fournit 
une inclusion naturelle de l'immeuble~$\mathscr I$  de 
  $\SL_F(W^+)$  (\ref{Lambdachapeau1}), $SU(W, V^0/F)$  (\ref{Lambdachapeau2}) 
  ou $SO_F(W)$  (\ref{Lambdachapeau3}) dans l'immeuble de $G_2(F)$.  Ces inclusions conservent le barycentre puisqu'elles s'obtiennent par prolongement unique de normes, elles sont donc continues. 
   Pour la m\^eme raison, 
    l'inclusion naturelle de l'immeuble de $\G2(F)$  dans l'immeuble \'etendu de
     $\GL_F(V)$,  donn\'ee par le fait qu'une norme d'alg\`ebre autoduale sur $V$  est une norme sur $V$,  est continue.

 Or les conditions (\ref{equation28}) restent v\'erifi\'ees pour tout point de l'intersection avec~$\mathscr I$  
 de la facette $\widetilde{\mathscr F}$ de  $\L^\diamond_{W^+}$   ou 
 $\L^\diamond_W$   dans l'immeuble \'etendu de
     $\GL_F(W^+)$ ou   $\GL_F(W)$ et affirment que  $\Lambda_{W^+}$   ou   $\Lambda_{W}$  est adh\'erent \`a la facette  $\widetilde{\mathscr F}$.  
     Consid\'erons le segment g\'eod\'esique $[\L^\diamond_{W^+}, 
     \L_{W^+}]$  ou    $[\L^\diamond_W, \L_W]$  dans~$\mathscr I$ qui s'identifie  
     par l'inclusion canonique avec le segment $[\L^\diamond , \L]$ de l'immeuble de $\G2(F)$.  Un voisinage $\mathscr V$  assez petit  de $\L$ dans l'immeuble de $\G2(F)$ ne contient que des points $\L^{'''}$ tels que  $ \widetilde{\mathfrak A}_0(\L^{'''})  \subseteq \widetilde{\mathfrak A}_0(\L) $. L'intersection de $\mathscr V$ avec 
     $[\L^\diamond , \L[$ est form\'ee de points  $\L^{''} $ tels que 
        $\Lambda^{''}_{W^+}$ 
ou $\L^{''}_W$ appartienne \`a $\widetilde{\mathscr F} \cap \mathscr I$
(car les facettes sont convexes donc  $[\L^\diamond_{W^+}, 
     \L_{W^+}[$, resp.     $[\L^\diamond_W, \L_W[$, est tout entier contenu dans la facette  $\widetilde{\mathscr F}$). Tout tel point   $\L^{''} $   v\'erifie les conditions voulues. 
     \end{proof}

\subsection{Classification des strates semi-simples de $\mathfrak g_2(F)$}\label{strates.4}

   L'\'etude du   paragraphe  \ref{strates.3}, dont on reprend les notations, nous permet d'\'etablir une liste des  formes  possibles de strates semi-simples non nulles de $\mathfrak g_2(F)$. Soit $[\L, n, r, \beta]$, $0\le r < n$,  une telle strate,  soit $V^0$    le sous-espace propre de $\beta$ pour la valeur propre nulle et soit $W$ son orthogonal. Lorsque $V'$ est un sous-espace de $V$  stable par $\beta$, on  note  $\beta_{V'}$ la restriction de $\beta$ \`a $V'$ (en tant qu'endomorphisme de $V'$) et $\L_{V'}$ la suite d\'efinie par : $\L_{V'}(i)=\L(i)\cap V'$ pour tout $i\in \BZ$. La strate a l'une des formes suivantes :
\begin{enumerate}
\item\label{planhyperbolique} $V^0$ est un plan hyperbolique. Alors $W$ poss\`ede une polarisation compl\`ete uniquement d\'etermin\'ee par $V^0$, $W=W^+\oplus W^-$, qui est stable par $\beta$ et la strate $[\Lambda_{W^+}, n,r,\beta_{W^+}]$ est une strate semi-simple dans $\msl(3,F)$  telle que  $\beta_{W^+}$ n'ait aucune valeur propre nulle~;

\item\label{extensionquadratique} $V^{0}$ est une extension quadratique $F'$ de $F$. Alors $W$ est un $F'$-espace vectoriel muni d'une forme hermitienne et la strate $[\Lambda_{W}, n,r, \beta_{W}]$  est une strate semi-simple dans $\su(2,1)(F'/F)$  telle que  $\beta_{W}$ n'ait aucune valeur propre nulle~;

\item\label{scalairegl2} $V^{0}$ est de dimension 4 et   $\beta_W^2=\lambda^2 \, {\rm Id}_W$ pour un $\lambda\in F^\times$. Alors $V^0$ est d\'eploy\'ee, les espaces propres $W_\lambda$ et $W_{-\lambda}$ de $\beta$ associ\'es aux valeurs propres non nulles d\'efinissent une polarisation compl\`ete de $W$ et la strate $[\L_{ W_\lambda}, n, r, \beta_{W_\lambda}]$ est une strate scalaire dans $\mathfrak{gl}(2,F)$~;

\item\label{scalairesu} $V^0$ est de dimension 4 et $\beta_W^2=u \, {\rm Id}_W$ o\`u $u\in F$ n'est pas un carr\'e. Alors $\beta_W$ munit $W$ d'une structure de $F[\beta_W]$-espace vectoriel  hermitien  et la strate $[\Lambda_{W}, n,r, \beta_{W}]$  est une strate scalaire dans $\mfu(W, F[\beta_W]/F)$. 

 \end{enumerate}

    \begin{Theorem}\label{remontee}
     La  liste ci-dessus est exhaustive et   toutes ces formes sont obtenues. 
     
     Pr\'ecis\'ement, 
  soit $D$ une sous-alg\`ebre de composition de dimension $2$ de $V$.
\begin{itemize}
    \item Si $D$ est un plan hyperbolique et $D^\perp= W^+ \oplus W^-$ la polarisation compl\`ete associ\'ee, pour toute strate semi-simple  $[\Lambda_{W^+}, n,r,\beta_{W^+}]$ de $\msl(W^+)$,
    la strate  $[\check\Lambda_{W^+}, n,r,\check\beta_{W^+}]$ est une strate semi-simple  de $\mathfrak g_2(F)$, dite {\em ``strate de type $D$''}.
    
    En outre, si  $[\Lambda_{W^+}, n,r+1,\gamma_{W^+}]$  est une strate semi-simple    de $\msl(W^+)$ \'equivalente \`a $[\Lambda_{W^+}, n,r+1,\beta_{W^+}]$, alors la strate semi-simple $[\check\Lambda_{W^+}, n,r+1,\check\gamma_{W^+}]$  de $\mathfrak g_2(F)$  est \'equivalente \`a
    $[\check\Lambda_{W^+}, n,r+1,\check\beta_{W^+}]$.
    
        \item Si $D$ est une extension quadratique de $F$ et $W=D^\perp$, pour tout strate semi-simple  $[\Lambda_{W }, n,r,\beta_{W }]$ de $\su(W )$,
    la strate  $[\vec\Lambda_{W }, n,r,\vec\beta_{W }]$ est une strate semi-simple  de $\mathfrak g_2(F)$, dite {\em ``strate de type $D$''}.
    
    En outre, si  $[\Lambda_{W }, n,r+1,\gamma_{W }]$  est une strate semi-simple    de $\su(W )$ \'equivalente \`a $[\Lambda_{W }, n,r+1,\beta_{W }]$, alors la strate semi-simple  $[\vec\Lambda_{W }, n,r+1,\vec\gamma_{W }]$  de $\mathfrak g_2(F)$  est \'equivalente \`a   
     $[\vec\Lambda_{W }, n,r+1,\vec\beta_{W }]$.
\end{itemize}
  Toute    strate  semi-simple de $\mathfrak g_2(F)$ est obtenue  par l'un ou l'autre de ces proc\'ed\'es.
     \end{Theorem}

    \begin{proof}
    Le proc\'ed\'e donn\'e  associe \`a une strate  de $\msl(W^+)$ ou
   $\su(W )$ une strate   de   $\mathfrak g_2(F)$  : c'est une cons\'equence directe des injections d'alg\`ebres de Lie rappel\'ees au paragraphe  \ref{strates.1} (notations \ref{betachapeau1} et \ref{betachapeau2})  d'une part et des constructions de normes faites au paragraphe
   \ref{strates.normes}  (notations \ref{Lambdachapeau1} et \ref{Lambdachapeau2}) d'autre part. Puisque les \'el\'ements semi-simples consid\'er\'es sont prolong\'es par $0$ sur $D$,
   ces constructions (\ref{Lambdachapeau1}) et (\ref{Lambdachapeau2}) assurent que  la valuation  relativement \`a la suite de r\'eseaux consid\'er\'ee est conserv\'ee : c'est $-n$.
    Ce proc\'ed\'e est
  compatible \`a l'\'equivalence de strates par \cite[Proposition 2.9]{BK2}.

    Il nous faut encore v\'erifier que la strate obtenue est semi-simple pour le m\^eme entier $r$, $0 \le r <  n$ (le cas des strates nulles, $r=n$, est imm\'ediat), en revenant \`a la
     d\'efinition de 
    \cite{S4} rappel\'ee au paragraphe \ref{strates.2}. Remarquons    que la somme d'une strate semi-simple et d'une strate nulle est toujours une strate semi-simple 
    et qu'en outre une strate semi-simple de $\su(W, D/F )$ est aussi une strate semi-simple de $\End_F(W)$, avec une d\'ecomposition peut-\^etre moins fine de $W$. 
  Le r\'esultat est donc clair sauf si     $D$ est un plan hyperbolique~: il faut alors montrer que 
    si   $[\Lambda_{W^+}, n,r,\beta_{W^+}]$ est une strate semi-simple de $\msl(W^+)$, 
    alors $[\Lambda_{W^+}\oplus \Lambda_{W^-}, n,r,\beta_{W^+}+ \beta_{W^-}]$ est une strate semi-simple de 
    $\mathfrak{s  o}_F( W^+\oplus W^-)$, dans les notations du lemme \ref{centralGL} que nous utilisons ci-apr\`es. Il s'agit d'une d\'emonstration au cas par cas, le r\'esultat ne serait pas vrai si $\beta_{W^+}$  n'\'etait pas de trace nulle. 
    Nous raisonnons comme dans le lemme \ref{centralGL} sur les polyn\^omes minimaux $P^+$ et $P ^-$ de $\beta_{W^+}$ et $\beta_{W^-}$.  
        \begin{description}
\item[--] Si $P^+$ est irr\'eductible, il est de degr\'e $3$ et l'\'el\'ement $\beta_{W^+}$
est minimal (lemme \ref{centraldim4}) ; les polyn\^omes caract\'eristiques des strates 
$[\Lambda_{W^+} , n,n-1,\beta_{W^+} ]$ et $[  \Lambda_{W^-}, n,n-1,\beta_{W^-}]$ sont premiers entre eux, la somme est bien semi-simple. 
\item[--] Si $P^+$ a deux diviseurs, ils correspondent \`a une d\'ecomposition de $\beta $ en somme des \'el\'ements simples 
$\beta_1^+$, de sous-espace caract\'eristique $W_1^+$  de dimension $1$ et $\beta_2^+$, de sous-espace caract\'eristique $W_2^+$ de dimension $2$ ; $\beta_2^+$ ne peut pas \^etre nul. Si 
$\beta_1^+$ est nul, alors $\beta_2^+$ est de trace nulle, donc minimal dans l'extension quadratique qu'il engendre, et  dans la d\'ecomposition correspondante de $W^-$, 
$\beta_2^-$ est conjugu\'e \`a $\beta_2^+$. Alors la strate 
$$[\Lambda_{W^+}\oplus \Lambda_{W^-}, n,n-1,\beta_{W^+}+ \beta_{W^-}]$$
est semi-simple, somme d'une strate nulle en dimension $2$ et d'une strate simple en dimension $4$, attach\'ee \`a $\beta_2^+$.

Si 
$\beta_1^+$ est non  nul, alors $\beta_{W^+}+ \beta_{W^-}$ a quatre sous-espaces caract\'eristiques distincts. Les polyn\^omes caract\'eristiques des strates attach\'ees \`a 
$\beta_1^+$ et $\beta_1^- = - \beta_1^+$ sont premiers entre eux, et de m\^eme pour celles attach\'ees \`a $\beta_2^+$ et $\beta_2^- $, conjugu\'e de $- \beta_2^+$, 
donc un d\'efaut de semi-simplicit\'e de la strate $[\Lambda_{W^+}\oplus \Lambda_{W^-}, n,r,\beta_{W^+}+ \beta_{W^-}]$  ne pourrait provenir que d'une somme ``mixte'' 
$\beta_1^+ + \beta_2^-$ (ou $\beta_1^- + \beta_2^+$ ce qui revient au m\^eme). 
Soient $-n_1$ et $-n_2$ les valuations de $\beta_1^+ $ et $ \beta_2^+$ par rapport aux suites de r\'eseaux $\L \cap W_1^+$ et  $\L \cap W_2^+$ respectivement. Ces suites ont m\^eme p\'eriode sur $F$ donc $\tr( \beta_2^+ )= -\beta_1^+$ entra\^ine $-n_1 \ge -n_2$. 
Par semi-simplicit\'e de la strate initiale on a $r < n_1$. Si donc la strate attach\'ee \`a $\beta_1^+ + \beta_2^-$ \'etait \'equivalente \`a une strate simple il s'agirait d'une strate scalaire attach\'ee \`a un \'el\'ement $\lambda$ congru \`a $\beta_1^+$  d'une part, \`a 
$\frac 1 2 \tr( \beta_2^- ) = \frac 1 2 \beta_1^+$ d'autre part : c'est impossible. 
\item[--] Si $P^+$ a trois diviseurs, ils correspondent \`a trois valeurs propres scalaires 
 $\beta_i^+$,  $i=1,2,3$, de somme nulle et sous-espaces propres
  $W_i^+$. 
 Si l'une est nulle, les deux autres sont oppos\'ees de sorte que les trois valeurs propres dans $W^-$, oppos\'ees des pr\'ec\'edentes, sont globalement les m\^emes : 
  la strate somme sera semi-simple avec d\'ecomposition en trois sous-espaces propres, chacun de dimension $2$. 
  
  Si aucune n'est nulle, soit $-n_i$   la valuation de $\beta_i^+ $   par rapport \`a la suite de r\'eseaux $\L \cap W_i^+$ ; notons que ces suites ont m\^eme p\'eriode sur $F$. A renum\'erotation pr\`es on peut supposer 
  $-n_1= -n_2 \le -n_3$, de sorte que la strate initiale est semi-simple pour  $0 \le r < n_3$.  Un d\'efaut de semi-simplicit\'e de la strate $[\Lambda_{W^+}\oplus \Lambda_{W^-}, n,r,\beta_{W^+}+ \beta_{W^-}]$  ne peut provenir que d'une somme  
$\beta_i^+ + (-\beta_j^+)$ avec $i \ne j$ donnant une strate \'equivalente \`a une strate simple. C'est alors forc\'ement une strate scalaire attach\'ee \`a un \'el\'ement $\lambda$ congru \`a $\beta_i^+$  et \`a $-\beta_j^+$. 
Les valuations de ces deux \'el\'ements sont alors \'egales et la somme $\beta_i^+ + \beta_j^+$ est congrue \`a $0$, il ne peut donc s'agir que de $\{i, j\} = \{1,2\} $ et on a $n_3 < n_1$. Ecrivons $\beta_2^+ = -\beta_1^+ - \beta_3^+$ : la congruence \`a $\lambda$ ne peut avoir lieu que pour $r \ge n_3$, c'est donc impossible. 

\end{description}

  Reste \`a voir que toute strate semi-simple non nulle  de $\mathfrak g_2(F)$ s'obtient ainsi.
  Pour une strate $[\L, n, r, \beta]$ telle que le noyau de $\beta$ est  de dimension $2$, c'est d\'ej\`a clair. Examinons  les cas o\`u le noyau $V^0$ de $\beta$ est de dimension $4$, c'est-\`a-dire les cas (iii) et (iv).
  Notons $W$ l'orthogonal de $V^0$ et rappelons que $\beta_W^2$ op\`ere par un scalaire $u $.  
  Notons enfin $\alpha$ la norme d'alg\`ebre autoduale correspondant \`a la suite de r\'eseaux $\Lambda$, $\alpha_0$ et $\alpha_W$ ses restrictions \`a $V^0$ et $W$ respectivement.  
\begin{itemize}
 \item Si $V^0$ est  isotrope  et si $u$  est le carr\'e de $\lambda \in F^\times$,
    les sous-espaces propres $W_\lambda$ et $W_{-\lambda}$ de $\beta_W$ forment une polarisation compl\`ete de $W$ qui scinde $\Lambda\cap W$. Soit $W_\lambda = F h \oplus F k$ un scindage de $\Lambda \cap W_\lambda $ et soit $(h', k')$ la base duale de $W_{-\lambda}$. 
   D'apr\`es  le lemme \ref{idempotents} on a $\beta(h-h') = c(h-h')$ avec 
   $c = \lambda c(Fh, Fh')$ et 
  d'apr\`es 
    la d\'emonstration de la proposition \ref{dimension4} (ii), la norme   $\alpha$ est scind\'ee 
    par $V = F[c] \perp F[c]^\perp$, c.q.f.d.

          \item Si $V^0$ est  isotrope  et si $u$ n'est pas un carr\'e de $F$, soit 
          $(h, h')$ une base de Witt de $W$ sur $F[\beta_W]   $ scindant $\alpha_W$. Elle d\'etermine $d=c(Fh, Fh') \in V^0$ (lemme \ref{idempotents}) et la d\'emonstration de    \ref{dimension4} (ii) montre que $\alpha$ est scind\'ee par $V = F[d] \perp F[d]^\perp$. D'autre part $\beta$, qui commute \`a la multiplication \`a gauche par $ V^0$,  est    $F[d]$-lin\'eaire, il conserve 
          $e^+(Fh, Fh')F[d]^\perp $ et   $e^-(Fh, Fh')F[d]^\perp $. La strate est donc aussi une strate de type $F[d]$.

      \item Si $V^0$ est anisotrope, soit $D$ une sous-extension quadratique de $F$ dans $V^0$  non isomorphe \`a $F[\beta_W ]$ et soit     $a \in W$ de norme $1$. 
      L'application $x \mapsto xa$ est une isom\'etrie de $V^0$ sur $W$ telle que 
      $\alpha_0(x)=\alpha_W(xa)$, ces normes sont donc scind\'ees 
      respectivement par $V^0= D \perp D^\perp\cap V^0$ et $W= Da \perp (D^\perp\cap V^0)a$
      (proposition \ref{dimension4} (i)).  La strate $[\Lambda_{W}, n,r, \beta_{W}]$ est aussi bien une strate simple dans $\su(W, D/F)$. 
      On prolonge $\beta_W$ par $0$ sur $D^\perp\cap V^0$ d'o\`u une strate semi-simple   de 
      $\su(W \perp D^\perp\cap V^0, D/F)$ dont l'image par le proc\'ed\'e (ii) de la proposition est exactement notre strate de d\'epart. 
    \end{itemize}
    \end{proof}

Le proc\'ed\'e ci-dessus  est compatible \`a l'\'equivalence des strates, donc \`a l'approximation, dont il nous faut dire un mot. Dans le premier cas, on sait par   \cite[Proposition 3.4]{S4}
qu'il existe  une strate semi-simple $[\L_{W^+}, n, r+1, \gamma_{W^+}]$ dans
$\mathfrak{gl}(W^+)$ \'equivalente \`a la strate $[\Lambda_{W^+}, n,r+1,\beta_{W^+}]$
  et telle que la d\'ecomposition  de
 $W^+$  associ\'ee \`a $\gamma_{W^+}$  s'obtienne \`a partir de  celle associ\'ee \`a $\beta_{W^+}$
   par regroupement \'eventuel de blocs. Puisque $\beta_{W^+}-\gamma_{W^+}$ est un \'el\'ement de $\fA_{-(r+1)}(\Lambda)$ et que $\beta_{W^+}$ est de trace nulle, l'\'el\'ement $\tr (\gamma_{W^+})I$ appartient aussi \`a $\fA_{-(r+1)}(\Lambda)$
 (cf. \cite[Proposition 2.9]{BK2}) donc on peut remplacer $\gamma_{W^+}$
 par  $\gamma_{W^+}- \frac 1 3 \tr (\gamma_{W^+})$ qui appartient \`a $\msl(3,F)$, d'o\`u l'existence d'une strate \'equivalente dans $\mathfrak{sl}(W^+)$.

Dans le second cas  on trouve de m\^eme un $\gamma_{W}$ dans $\mfu(W)$
via  \cite[\S 3.6]{S4} puis on le
 remplace  par $\gamma_{W}- \frac 1 3 \tr (\gamma_{W})$ qui appartient \`a    $\su(W)$, d'o\`u l'existence d'une strate \'equivalente dans $\su(W)$.

Ces remarques, en compl\'ement du th\'eor\`eme,   assurent que :

 \begin{Corollary}\label{raffinement1}
 Le processus d'approximation des strates peut rester interne aux strates semi-simples de $\mathfrak g_2(F)$ : 
 soit $D$ une sous-alg\`ebre de composition de dimension $2$ de $V$ et 
 soit $[\L, n, r, \beta]$, $0\le r < n$,  une   strate semi-simple de $\mathfrak g_2(F)$ de type $D$.
 La strate $[\L, n, r+1, \beta]$ est \'equivalente \`a une strate semi-simple $[\L, n, r+1, \gamma]$ de   $\mathfrak g_2(F)$ de type $D$.

  \end{Corollary}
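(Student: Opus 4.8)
The plan is to carry the whole matter down to $W'$ (which I take to mean $W_D^+$ when $D$ is a hyperbolic plane and $D^\perp$ when $D$ is a field), to run the semi-simple approximation there with the results already available for $\msl$ and $\su$, and to lift the outcome back up to $\mathfrak g_2(F)$ by Theorem~\ref{remontee}.

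First, because $[\L,n,r,\beta]$ is a semi-simple stratum of $\mathfrak g_2(F)$ of type $D$, Theorem~\ref{remontee} exhibits it as $[\check\L_{W'},n,r,\check\beta_{W'}]$ when $D$ is hyperbolic, with $[\L_{W'},n,r,\beta_{W'}]$ a semi-simple stratum of $\msl(W')\simeq\msl(3,F)$, and as $[\vec\L_{W'},n,r,\vec\beta_{W'}]$ when $D$ is a quadratic extension $F'$ of $F$, with $[\L_{W'},n,r,\beta_{W'}]$ a semi-simple stratum of $\su(W')\simeq\su(2,1)(F'/F)$. In either case $\L$ and $\beta$ are nothing but the images, under the canonical procedure, of $\L_{W'}$ and $\beta_{W'}$, so that $[\L,n,r+1,\beta]$ is exactly $[\check\L_{W'},n,r+1,\check\beta_{W'}]$, resp.\ $[\vec\L_{W'},n,r+1,\vec\beta_{W'}]$. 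The boundary case $r+1=n$ is the null stratum and is immediate, so I may assume $r+1<n$, in which case $[\L_{W'},n,r+1,\beta_{W'}]$ is a genuine stratum since $\beta_{W'}$ has $\L_{W'}$-valuation $-n$.

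Next I would apply, exactly as in the paragraphs preceding the corollary, the approximation theorem for semi-simple strata of the linear group, resp.\ of the unitary group: by \cite[Proposition~3.4]{S4}, resp.\ \cite[\S 3.6]{S4}, there is a semi-simple stratum $[\L_{W'},n,r+1,\gamma_{W'}]$ of $\mathfrak{gl}(W')$, resp.\ of $\mfu(W')$, equivalent to $[\L_{W'},n,r+1,\beta_{W'}]$, whose associated decomposition of $W'$ is obtained from the one attached to $\beta_{W'}$ by a possible regrouping of blocks. The sole delicate point — already settled above — is that this approximant lies a priori only in $\mathfrak{gl}(W')$, resp.\ $\mfu(W')$, and not in $\msl(W')$, resp.\ $\su(W')$; but $\tr(\gamma_{W'})\,I$ lies in $\fA_{-(r+1)}(\L_{W'})$ by \cite[Proposition~2.9]{BK2} (using $\tr\beta_{W'}=0$), so, since $p\ne 3$, replacing $\gamma_{W'}$ by $\gamma_{W'}-\tfrac{1}{3}\tr(\gamma_{W'})$ yields an equivalent semi-simple stratum whose defining element now lies in $\msl(W')$, resp.\ $\su(W')$.

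Finally I would invoke Theorem~\ref{remontee} once more, in the refined form asserting that the procedures $\L_{W'}\mapsto\check\L_{W'},\ \beta_{W'}\mapsto\check\beta_{W'}$ and $\L_{W'}\mapsto\vec\L_{W'},\ \beta_{W'}\mapsto\vec\beta_{W'}$ are compatible with equivalence of strata. From the semi-simple stratum $[\L_{W'},n,r+1,\gamma_{W'}]$ of $\msl(W')$, equivalent to $[\L_{W'},n,r+1,\beta_{W'}]$, it follows that $[\check\L_{W'},n,r+1,\check\gamma_{W'}]$ is a semi-simple stratum of $\mathfrak g_2(F)$ of type $D$, equivalent to $[\check\L_{W'},n,r+1,\check\beta_{W'}]=[\L,n,r+1,\beta]$, and similarly in the quadratic case with $[\vec\L_{W'},n,r+1,\vec\gamma_{W'}]$. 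Setting $\gamma:=\check\gamma_{W'}$ in the hyperbolic case and $\gamma:=\vec\gamma_{W'}$ in the quadratic case gives the statement. I do not expect a genuine obstacle here: the real content sits in Theorem~\ref{remontee} and in Stevens' semi-simple approximation theorems for $\GL$ and for the unitary groups, and the only subtlety — keeping the approximant inside $\msl$, resp.\ $\su$, through the trace correction — has already been taken care of before the statement.
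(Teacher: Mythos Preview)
Your proof is correct and follows exactly the paper's approach: the corollary is stated there as a direct consequence of Theorem~\ref{remontee} together with the remarks immediately preceding it, which perform precisely the descent to $W'$, the semi-simple approximation via \cite[Proposition~3.4 and \S3.6]{S4}, the trace correction $\gamma_{W'}\mapsto\gamma_{W'}-\tfrac{1}{3}\tr(\gamma_{W'})$ to land in $\msl$ or $\su$, and the lift back via the compatibility with equivalence in Theorem~\ref{remontee}. There is nothing to add.
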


\begin{Remark} 
{\rm 
 Lorsque $V^0$ est de dimension $4$ et d\'eploy\'ee nous venons de montrer 
que la strate $[\L, n, r, \beta]$ est toujours de type $D$ pour une sous-alg\`ebre de composition   $D$ hyperbolique de dimension $2$ convenable. Elle pourrait aussi \^etre de type $D'$ pour une extension quadratique $D'$ convenable. De toutes fa\c cons, 
pour une strate telle que $V^0$ soit de dimension $4$,  le proc\'ed\'e ci-dessus n'est pas canonique et ne nous dispensera pas d'un traitement particulier, notamment pour    l'\'etude du centralisateur de $\beta$. 
La diff\'erence de nature entre les quatre types de strates ci-dessus appara\^\i t d'ailleurs clairement dans la d\'emonstration du th\'eor\`eme \ref{carfixes}.  }
\end{Remark}

\setcounter{section}{1}
       
\Section{Trialit\'e et caract\`eres semi-simples de $\SO_F(V)$} \label{par2}  

Soit $[\L, n, 0, \beta]$ une strate semi-simple  de $\mathfrak g_2(F)$ au sens du paragraphe \ref{strates.2}. Nous voulons lui associer une famille de caract\`eres semi-simples dans le groupe $\G2(F)$, obtenus \`a l'aide de l'action de la trialit\'e sur une famille de caract\`eres semi-simples dans le groupe $\SO_F(V)$. Ceci n\'ecessite une \'etude pr\'ecise de 
la trialit\'e afin d'en sp\'ecifier l'action  sur les objets relatifs \`a la strate  dans le groupe $\SO_F(V)$ : les sous-groupes ouverts compacts   $P^i(\L)$, $  H^i(\beta, \L)$, $ J^i(\beta, \L)$ ($i \ge 1$)   et  les 
  caract\`eres semi-simples  de $   H^1(\beta, \L)$.  
  
\subsection{La trialit\'e et le groupe $\Spin_F(V)$}\label{par21} 
La r\'ef\'erence pour ce paragraphe est \cite[\S3]{SV}. 
 Rappelons pour m\'emoire  que la {\it norme 
spinorielle} est l'homomorphisme de $\SO_F(V)$ dans $F^\times / (F^\times)^2$ qui \`a une r\'eflexion orthogonale d\'efinie par un \'el\'ement non isotrope $a$ de $V$ associe la classe de $Q(a)$. On notera  $\O^\prime_F(V):=G^\prime $ son noyau, le {\it groupe orthogonal r\'eduit}. Commen\c cons par  la d\'efinition de la trialit\'e :

\begin{Definition} 
Soit $t_1$ un \'el\'ement de $\SO_F(V)$. Il existe $ t_2$ et $t_3$ dans $\SO_F(V)$ v\'erifiant 
\begin{equation}\label{trialite}
t_1(xy) = t_2(x) t_3(y) \quad \text{pour tous } x, y \in V 
\end{equation}
si et seulement si la norme spinorielle de $t_1$ est triviale ; 
dans ce cas $t_2$ et $t_3$ appartiennent eux aussi \`a 
$\O^\prime_F(V)$. On appelle {\em triplet reli\'e}  tout triplet 
$(t_1,t_2,t_3)$ d'\'el\'ements de $\SO_F(V)$ v\'erifiant (\ref{trialite}).
Ce sont alors des \'el\'ements de  $\O^\prime_F(V)$. 
\end{Definition}

 Soit $\Spin_F (V) $   le groupe des points sur $F$ du groupe alg\'ebrique $\mathbf{Spin} (V)$, d\'efini sur $F$
    (le rev\^etement simplement connexe de $\mathbf{SO}(V)$). 
Dans le cas de l'alg\`ebre d'octonions la trialit\'e fournit le mod\`ele   de  $\Spin_F (V) $, 
en termes de triplets reli\'es, que nous utiliserons  : 
  $$
  \Spin_F (V) = \{ (t_1, t_2, t_3) \in \SO_F(V)^3/ \ \forall x, y \in V  \  t_1(xy)=t_2(x)t_3(y) \} 
  $$  
et l'application $\pi $ de premi\`ere projection fournit   la suite exacte
fondamentale \cite[\S 3.6 et 3.7]{SV}
\begin{equation}\label{spin}
1 \longrightarrow \{\pm 1 \}
\longrightarrow \Spin_F (V)
\stackrel{\pi}{\longrightarrow}
 \O^\prime_F(V) \longrightarrow 1
\end{equation}    

Pour tout  $t$ appartenant \`a $ \SO_F(V)$ on d\'efinit $\hat t \in \SO_F(V)$ par $\hat t(x) = \overline{t(\bar x)}$ ($x \in V$). Alors $t \mapsto \hat t$ est un automorphisme involutif de 
$\SO_F(V)$ et si $(t_1, t_2, t_3)$ est un triplet reli\'e, les triplets 
\begin{equation}\label{triplets}
(t_2, t_1, \hat t_3) , \   (t_3, \hat t_2, t_1), \  (\hat t_1, \hat t_3, \hat t_2), \  
 (\hat t_2, t_3, \hat t_1), \  (\hat t_3, \hat t_1, t_2) \end{equation}
  sont eux aussi des triplets reli\'es. 
On d\'efinit le groupe 
  $\Gamma$  des {\it automorphismes   de  trialit\'e } comme le groupe d'automorphismes de 
    $\Spin_F (V) $ engendr\'e par 
    $(t_1, t_2, t_3) \mapsto   (\hat t_2, t_3, \hat t_1)$, 
    d'ordre $3$, et $(t_1, t_2, t_3) \mapsto   (t_2, t_1, \hat t_3)$, d'ordre $2$. 
Il  est   isomorphe
\`a $\mathfrak S_3$ et son groupe de points fixes est isomorphe \`a $\G2(F)$ via la projection $\pi$ :
$$
1  \longrightarrow \Spin_F (V)^\Gamma
\stackrel{\pi}{\longrightarrow}
 \G2(F) \longrightarrow 1
$$
Puisque $p$ est impair, les groupes $\SO_F(V)$ et $\O^\prime_F(V)$ ont les m\^emes 
pro-$p$-sous-groupes et   d'apr\`es \cite[Th\'eor\`eme 9.7.3]{W}, l'extension (\ref{spin}) 
 est scind\'ee au-dessus de tout  pro-$p$-sous-groupe $H$,   et ceci de fa\c con unique :  l'image de l'unique section homomorphe $s_H$ de 
 $H$ dans $\pi^{-1}(H)$ est le pro-$p$-Sylow de $\pi^{-1}(H)$, 
 qui est distingu\'e et d'indice $2$ dans $\pi^{-1}(H)$.  
 Ainsi, si $\pi^{-1}(H)$ est stable par les automorphismes   de  trialit\'e, il en est de m\^eme de $s_H(H)$ et 
on peut  consid\'erer $\Gamma$  comme un groupe d'automorphismes de $H$, engendr\'e par 
$t_1 \mapsto t_2 $ et $t_1 \mapsto \hat t_2 $ par exemple, o\`u  $(t_1, t_2, t_3)$ est un triplet reli\'e d'\'el\'ements de $H$. C'est sous cette forme que la trialit\'e appara\^it 
le plus souvent dans la suite.

Par d\'erivation on obtient un groupe
$\mathbf \Gamma$ d'automorphismes de l'alg\`ebre de Lie
de $\Spin_F (V)$, isomorphe \`a $\mathfrak{s   o}_F(V)$, 
qui peut de m\^eme \^etre d\'efini \`a partir de la notion de {\it triplets reli\'es}.   Pour tout \'el\'ement $t_1$ de 
  $\mathfrak{s   o}_F(V)$, il existe $t_2, t_3 \in  \mathfrak{s   o}_F(V)$, uniques, tels que : 
\begin{equation}\label{trialiteLie}
t_1(xy) = t_2(x)y + x t_3(y) \quad \text{pour tous } x, y \in V . 
\end{equation}  
Le triplet $(t_1, t_2, t_3)$ d'\'el\'ements de  $\mathfrak{s   o}_F(V)$ est alors dit {\it reli\'e}, et les triplets de la liste \ref{triplets} le sont aussi. 
Les automorphismes de trialit\'e sont les automorphismes de la forme $t_1 \mapsto t_i$ ou $\hat t_i$, $i=1,2,3$, o\`u $(t_1, t_2, t_3)$ est un triplet reli\'e. Leur 
 action   a pour 
  points fixes  les d\'erivations de $V$, c'est-\`a-dire l'alg\`ebre de Lie de
$\G2(F)$ : $\mathfrak g_2(F) = {\mathfrak{s   o}_F(V)}^{\mathbf \Gamma}$.

Dans la suite, l'expression ``fixe par trialit\'e'' signifie
 ``fixe par $\Gamma$ ou $\mathbf \Gamma$''.

\subsection{Trialit\'e et base de Witt}\label{par22}

Nous partons du mod\`ele de l'alg\`ebre d'octonions reproduit dans \cite[\S5]{GY}. 
Il comporte un espace vectoriel $W^+$ de dimension $3$ sur $F$, muni d'une base $ \{e_1 ,  e_2 ,  e_3\} $ 
et son dual $W^-$ muni de la base  $\{e_{-1} ,  e_{-2 },  e_{-3} \}$ d\'etermin\'ee par $e_{-i}(e_j)=-\delta_{ij}$. On 
 identifie  $\wedge^3 W^+$ \`a $F$ en convenant de  $e_1 \wedge e_2 \wedge   e_3 = 1$, d'o\`u des identifications $\wedge^2 W^+ \simeq W^-$ et  $(\wedge^2 W^+)^\ast \simeq W^+$. 
L'isomorphisme de $\wedge^2 W^-  $ sur  $(\wedge^2 W^+)^\ast  $ donn\'e par 
$$ (\phi_1\wedge \phi_2, w_1 \wedge w_2) \mapsto \phi_1(w_1) \phi_2 (w_2) - 
\phi_1(w_2)\phi_2(w_1)  
$$
fournit enfin un isomorphisme de $\wedge^2 W^-  $ sur $W^+$. 
Avec ces conventions, l'alg\`ebre $V$ peut \^etre d\'ecrite comme   l'espace des matrices 
$$
\left( \begin{matrix}a & w \cr \phi & b 
\end{matrix}\right) \qquad 
a, b \in F, \quad w \in W^+ , \  \phi \in W^-, 
$$ 
muni du produit 
$$
\left( \begin{matrix}   a & w \cr \phi & b  
\end{matrix}\right)
\left( \begin{matrix}   a' & w' \cr \phi' & b'  
\end{matrix}\right)
=
\left( \begin{matrix}   aa' + \phi'(w) & aw'+b'w - \phi\wedge\phi'  \cr a'\phi +b\phi' + w \wedge w'& 
b b' + \phi(w') 
\end{matrix}\right). 
$$
 Ce mod\`ele est particuli\`erement adapt\'e \`a l'\'etude de la trialit\'e sur les automorphismes stabi\-lisant $W^+$ et $W^-$. Il est facile de v\'erifier que 
 pour une isom\'etrie $t_1$ de la forme   : 
 $$  \iota \,(u, g) \, (\left( \begin{matrix}a & w \cr \phi & b 
\end{matrix}\right)  ) = \left( \begin{matrix}  u^{-1} a & gw \cr \phi g^{-1}& u b 
\end{matrix}\right) \qquad  u \in F^\times, \  g \in \GL_F(W^+),  
 $$
 l'\'equation (\ref{trialite}) a une solution si et seulement si $u  \, \det  g $ est un carr\'e, la solution \'etant donn\'ee, pour un choix de $\lambda \in F^\times$  tel que 
 $\lambda^2 = u  \, \det  g $, par : 
\begin{equation}\label{GLW} 
 t_2= \iota \, ( \lambda^{-1}u, \lambda^{-1} g), \quad  
 t_3 = \iota \, ( \lambda , u \lambda^{-1}g). 
\end{equation}
Bien entendu on retrouve le fait que les rotations  stabi\-lisant $W^+$ et $W^-$ et fix\'ees par trialit\'e forment un groupe canoniquement isomorphe \`a $\SL_F(W^+)$ (\S \ref{composition}). 

Il s'agit maintenant de choisir une base de $V$ permettant de calculer la trialit\'e sur les sous-groupes radiciels standard. 
En posant $e_{-4} = \left( \begin{smallmatrix}   1 & 0 \cr 0 & 0  
\end{smallmatrix}\right)$ et $e_{4} = \left( \begin{smallmatrix}   0 & 0 \cr 0 & 1  
\end{smallmatrix}\right)$, on obtient une base de Witt $\{e_i\}$ de $V$.  
 Pour $\lambda \in F$ et $i, j \in \{ \pm 1, \pm 2, \pm 3, \pm 4\}$ avec $ i \ne \pm j$, 
on pose 
$$
u_{i,j}(\lambda)(e_i) = e_i + \lambda e_{-j}, \quad  
u_{i,j}(\lambda)(e_j) =  e_j - \lambda e_{-i}, \quad   
u_{i,j}(\lambda)(e_k) = e_k \ \text{ pour } k \ne i, j. 
$$
 
\begin{Lemma}\label{radiciels}  
Si 
$i$ et $j $ sont de signe oppos\'e et distincts de $\pm 4$, $u_{i,j}(\lambda)$  est fix\'e par   trialit\'e. 
Sinon,  notant  
  $(i, i^{+} , i^{++}) $  l'un des triplets $ \, (1,2,3) $,  $(2, 3,1) $,   $(3,1,2) $, les triplets suivants sont reli\'es :   
$$ \ ( u_{-i, -i^{++}} (\lambda), \,  u_{4, i^{+}} (\lambda), \ u_{i^{+}, -4} (\lambda)) \   
\text{ et } 
  \ ( u_{ i,  i^{++}} (\lambda), \, u_{-4,- i^{+}} (\lambda), \  u_{-i^{+},  4} (\lambda)) \ .$$
Il en est de m\^eme dans l'alg\`ebre de Lie $\mathfrak{s   o}_F(V)$ en rempla\c cant 
$u_{i,j}(\lambda)$ par $U_{i,j}(\lambda)= u_{i,j}(\lambda)-I$. 
\end{Lemma} 
\begin{proof} 
Si $i$ et $j $ sont distincts de $\pm 4$, $u_{i,j}(\lambda)$ appartient \`a $\SL_F(W^+)$. 
La suite est sans doute bien connue  (\cite[\S 6]{GY} par exemple),  cependant nous   pr\'ef\'erons inclure une 
version pr\'ecise. 
Il suffit de v\'erifier que pour tous $k, l$ on a 
$t_1(e_k e_l) = t_2(e_k) t_3(e_l)$. On s'appuie sur la table de multiplication   de l'alg\`ebre qui est donn\'ee comme suit, avec  $(i, i^{+} , i^{++}) $ comme ci-dessus et $j $ un entier dans $\{1,2,3\}$ : 
$$
\begin{aligned}
&e_ie_i= e_{-i}e_{-i} = 0  \qquad \  
e_ie_{i^{+}}= -e_{i^{+}}e_{i } = e_{-i^{++}} \qquad  \   e_{-i}e_{-i^{+}}= -e_{-i^{+}}e_{-i } = e_{i^{++}}  
\\
&e_{-j}e_i = -\delta_{ij}e_4  \quad e_{ j}e_{-i} = -\delta_{ij}e_{-4}  \qquad
e_{-4}e_{-4}=e_{-4}  \quad e_{4} e_{4} = e_{4}    \quad
e_{-4}e_{ 4}= e_{ 4}e_{-4}=0   
\\ 
&e_{-4}e_{i}=e_{ i}e_{ 4}=  e_{i} \qquad  e_{-4}e_{-i}= e_{-i} e_{ 4}=0 \qquad 
e_{-i}e_{-4}=e_{ 4}e_{-i} = e_{-i} \quad  e_{i}e_{-4}=e_{ 4}e_ i =0     
\end{aligned}
$$
En particulier, un produit $e_{-i^{+}} e_j$, avec   $j\ne i^+$ et $j \ne -4$,  est non nul si et seulement si 
$j = -i $ ou $j=-i^{++}$, point essentiel du calcul.   
\end{proof}

Pour $\lambda \in F^\times$ et $i \in \{   1,  2,  3,   4\}$, soit 
$d_i(\lambda)$ l'\'el\'ement de $\SO_F(V)$ envoyant 
$e_i$ sur $\lambda e_i  $, $e_{-i}$ sur $ \lambda^{-1}e_{-i}$  et fixant   les autres vecteurs de base. 
La trialit\'e sur ces \'el\'ements est un cas particulier de (\ref{GLW}).  
On d\'efinit de m\^eme $D_i(t) \in  \mathfrak{so}_F(V)$   par $D_i(t)(e_i)= t e_i$, 
$D_i(t)(e_{-i})= - t e_{-i}$ et $D_i(t)(e_j)= 0$ pour $j \ne \pm i$ ($t \in F$). 
On inclut par commodit\'e le calcul suivant : 
\begin{Lemma}\label{diagonaux}  
Les triplets d'\'el\'ements de  $\mathfrak{so}_F(V)$ suivants sont reli\'es  ($i=1,2,3  $) : 
$$
\begin{aligned}
  & ( \ D_i(t), \ D_i(t/2) + \sum_{\begin{smallmatrix}   j \in  \{   1,   2,  3,   4\} \cr 
j \ne i 
  \end{smallmatrix}} D_j(-t/2), \  
  \ D_4(t/2) + \ D_i(t/2) + \sum_{\begin{smallmatrix}   j \in  \{   1,   2,  3,   4\} \cr 
j \ne i, 4  
  \end{smallmatrix}} D_j(-t/2) \   )   , 
  \\
  &( \  D_4(t), \ D_4(t/2) + \sum_{  j \in  \{   1,   2,  3 \}  } D_j(-t/2), \  
  \sum_{   j \in  \{   1,   2,  3,   4\}  
 } D_j( t/2) \   ) . 
  \end{aligned}
$$
 \end{Lemma}
 
 \subsection{Trialit\'e et transformation de Cayley}\label{par225}
 Soit $[\L, n, 0, \beta]$ une strate semi-simple  de $\mathfrak g_2$.   L'alg\`ebre  $\End_F(V)$ 
 admet une filtration par les  
    r\'eseaux $\tilde {\mathfrak A}_i(\Lambda)= \{ g \in   \End_F(V)   
 / \forall k \in \mathbb Z \ g \Lambda(k) \subseteq \Lambda(k+i) \} $ ($i \in \mathbb Z$), et 
 le sous-groupe ouvert compact $\tilde P (\L)= \{ g \in  \GL_F(V) 
 / \forall k \in \mathbb Z \ g \Lambda(k) =  \Lambda(k) \}$ admet une filtration par  
 les pro-$p$-sous-groupes  $$\tilde P^i(\L)= \{ g \in  \GL_F(V) 
 / \forall k \in \mathbb Z \ \  (g-1) \Lambda(k) \subseteq \Lambda(k+i) \} \quad (i \ge 1). $$

  De m\^eme l'alg\`ebre  $\mathfrak{s   o}_F(V)$ 
 admet une filtration par les  
    r\'eseaux $ \mathfrak A_i(\Lambda) = \tilde {\mathfrak A}_i(\Lambda) \cap \mathfrak{s   o}_F(V)$ 
      et 
 le sous-groupe ouvert compact $P (\L)= \tilde P (\L) \cap \O_F(V)$
   admet une filtration par  
 les pro-$p$-sous-groupes   $P^i(\L)= \tilde P^i(\L) \cap \O_F(V)$, contenus dans $\O^\prime_F(V)$.

 D'apr\`es \cite[ \S 11 et Corollaire 11.14]{GY}, le groupe $\Gamma$ agit par trialit\'e sur l'immeuble 
 de $\Spin_F (V) $ de fa\c con compatible \`a son action sur  $\Spin_F (V) $ et l'immeuble de $\G2(F)$ s'identifie  \`a l'ensemble des points de l'immeuble de $\Spin_F (V) $ qui sont fix\'es par $\Gamma$.  Le sous-groupe $\pi^{-1}(P^\prime( \L))$ de $\Spin_F (V) $,  fixateur du point attach\'e \`a $\L$ dans l'immeuble de  $\Spin_F (V) $, est donc, comme ce point, stable par trialit\'e. 
  Notons $s$ l'unique section homomorphe de  $P^1(\L)$ dans 
 $\Spin_F (V)$. Alors  $s(P^1(\L))$,  plus grand  pro-$p$-sous-groupe distingu\'e de 
  $\pi^{-1}(P^\prime( \L))$, est stable par trialit\'e et l'unicit\'e de $s$ permet de d\'efinir de mani\`ere unique l'action de la trialit\'e sur $P^1(\L)$ (\S \ref{par21}).
 Le but de ce paragraphe 
 est de montrer que les sous-groupes  $P^i(\Lambda)$  ($i \ge 1$) sont eux aussi stables par trialit\'e 
 et que les groupes de points fixes obtenus ont les propri\'et\'es esp\'er\'ees 
 (Proposition \ref{diagcom}). 
 
La suite de r\'eseaux $\L$ est scind\'ee par une base de Witt comme ci-dessus \cite[Proposition 8.1]{GY}. Elle est associ\'ee \`a une norme d'alg\`ebre autoduale $\alpha$ sur $V$ et \`a sa  fonction de r\'eseau $\lambda_\alpha$,  et 
il existe un entier $m$ tel que l'on ait  pour tout $i \in \mathbb Z$ : 
$\L(i)= \lambda_\alpha(\frac {i} {m})$ 
(\S \ref{strates.normes}). Notons $\lceil x \rceil$ le plus petit entier sup\'erieur ou \'egal au nombre r\'eel $x$. 
 D'apr\`es \cite[Lemma 3.2]{L} les r\'eseaux $ \mathfrak A_k(\Lambda)$   sont alors  donn\'es par 
 $$
  \mathfrak A_k(\Lambda) = \bigoplus_{i=1}^4 D_i\left(\pF^{\lceil \frac {k} {m} \rceil}\right) \  \oplus \ 
  \bigoplus_{\begin{smallmatrix} i, j \in  \{ \pm 1, \pm 2, \pm 3, \pm 4\} \cr i \ne \pm j
  \end{smallmatrix}} U_{i,j} \, \left(\pF^{\lceil\frac {k} {m}+\alpha(e_i)+\alpha(e_j)\rceil}\right) 
 $$
  Leur partie diagonale est stable  par trialit\'e par le lemme \ref{diagonaux}, puisque les  $D_i$ portent tous la m\^eme valuation. 
 Certaines parties $U_{i,j}$ sont   fix\'ees individuellement par la trialit\'e ; les autres  doivent \^etre regroup\'ees  trois par trois  comme dans le lemme \ref{radiciels}  et la stabilit\'e   par trialit\'e 
 \'equivaut \`a montrer, dans les notations de ce lemme, que 
 $$
 \begin{aligned}
  \alpha(e_{-i})+\alpha(e_{-i++})&= \alpha(e_{4})+\alpha(e_{ i +})= \alpha(e_{ i+})+\alpha(e_{-4}) \\  \text{ et }  \ \ 
  \alpha(e_{i})+\alpha(e_{i++}) =& \alpha(e_{-4})+\alpha(e_{- i +})= \alpha(e_{ -i+})+\alpha(e_{4}). 
 \end{aligned}
 $$  
 Or ceci d\'ecoule du lemme \ref{151} et de la   proposition  \ref{volume0sl} qui nous  assurent que $\alpha(e_{ 4})=\alpha(e_{-4})=0$,  $\alpha(e_{1})+ \alpha(e_{2})+ \alpha(e_{3})=0$ et $ \alpha(e_{i})+ \alpha(e_{-i}) =0$. 
 
 \medskip 
 
Pour $k\ge 1$ la  transformation  de Cayley $X \mapsto  C(X) = (1+\frac X 2) (1-\frac X 2)^{-1}$ 
  est une bijection de $ \mathfrak A_k(\Lambda)$ sur $P^k(\Lambda)$ \cite[2.13]{M}. D'autre part $P^k(\Lambda)$ a une d\'ecomposition d'Iwahori par rapport au tore diagonal et au sous-groupe de Borel standard dans la base de Witt donn\'ee, il est donc engendr\'e par ses intersections avec les sous-groupes $d_i(F^\times)$ et $u_{i,j}(F)$. 
  Enfin, on a 
\begin{equation}\label{cdcu}
   C (D_i(\lambda))= d_i( C(\lambda)) \quad (\lambda \in \pF) \quad \text{ et } 
  \quad 
  C (U_{i,j}(\lambda))= u_{i,j}( \lambda) \quad (\lambda \in F). 
\end{equation}
  Le groupe $P^k(\Lambda)$ est donc stable par trialit\'e, puisqu'il poss\`ede une famille de g\'en\'erateurs stable par trialit\'e :  les $d_i\left(1+\pF^{\lceil \frac {k} {m} \rceil}\right)$ et les $U_{i,j} \, \left(\pF^{\lceil\frac {k} {m}+\alpha(e_i)+\alpha(e_j)\rceil}\right)$. 
 Nous venons de montrer la premi\`ere assertion de la proposition suivante, qui 
 est au groupe $\G2(F)$ ce que les lemmes 3.1 et 3.2 de \cite{S2} sont aux groupes  classiques :

\begin{Proposition}\label{diagcom}
Soient $r$ et $s$ des entiers tels que $1 \le r \le s \le 2r$. 
\begin{enumerate}
\item $ \mathfrak A_r(\Lambda)$   et  $P^r(\Lambda)$  sont stables par trialit\'e. 
\item L'isomorphisme de groupes ab\'eliens : 
$ 
\mathfrak A_r(\Lambda) / \mathfrak A_s(\Lambda)  \stackrel{\simeq}{\longrightarrow}  
P^r(\Lambda) / P^s(\Lambda)
$ 
induit par la bijection de Cayley   commute \`a la trialit\'e. 
  Il induit un isomorphisme de groupes ab\'eliens 
$$
  {\mathfrak A}_r(\Lambda)^{\mathbf \Gamma} /    {\mathfrak A}_s(\Lambda)^{\mathbf \Gamma} 
\stackrel{\simeq}{\longrightarrow} 
  P^r(\Lambda)^{ \Gamma} /  P^s(\Lambda)^{ \Gamma}. 
$$
\item Soit $\psi $ un caract\`ere additif    de $F$ de conducteur $\pF$. 
L'isomorphisme $P(\Lambda)$-\'equivariant de groupes ab\'eliens  
$$
\begin{aligned}
\mathfrak A_{1-s}(\Lambda) / \mathfrak A_{1-r}(\Lambda)   \quad   &\stackrel{\simeq}{\longrightarrow}  \quad
\left(P^r(\Lambda) / P^s(\Lambda)\right)^\wedge \\
b +  \mathfrak A_{1-r}(\Lambda) \quad  &\longmapsto \qquad \psi_b  \qquad  \qquad (\psi_b(x) = \psi\circ\tr (b(x-1))) 
\end{aligned}
$$
commute \`a la trialit\'e :  $\psi_{d\nu (b)}(x) = \psi_b( \nu^{-1} (x))$ pour tout $\nu \in \Gamma$ de diff\'erentielle 
$d\nu \in \mathbf \Gamma$. 
\end{enumerate}
\end{Proposition}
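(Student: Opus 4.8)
The plan is to establish the three parts in order, each reducing to the previous together with the diagonal/radicial bookkeeping already in place. For part (i), the stability of $\mathfrak A_r(\Lambda)$ under $\mathbf\Gamma$ and of $P^r(\Lambda)$ under $\Gamma$ has essentially been proved in the discussion preceding the statement: the explicit decomposition of $\mathfrak A_k(\Lambda)$ as a sum of a diagonal part $\bigoplus_i D_i(\pF^{\lceil k/m\rceil})$ and radicial parts $U_{i,j}(\pF^{\lceil k/m+\alpha(e_i)+\alpha(e_j)\rceil})$, combined with Lemma~\ref{diagonaux} (for the diagonal part, all $D_i$ carrying the same valuation) and Lemma~\ref{radiciels} (grouping the $U_{i,j}$ three by three), together with the equalities $\alpha(e_4)=\alpha(e_{-4})=0$, $\alpha(e_1)+\alpha(e_2)+\alpha(e_3)=0$ and $\alpha(e_i)+\alpha(e_{-i})=0$ coming from Lemma~\ref{151} and Proposition~\ref{volume0sl}, show that the generating pieces are permuted by trialité. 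For $P^r(\Lambda)$ one transports this through the Cayley map, using the Iwahori decomposition of $P^r(\Lambda)$ and the intertwining formulas $C(D_i(\lambda))=d_i(C(\lambda))$, $C(U_{i,j}(\lambda))=u_{i,j}(\lambda)$ of \eqref{cdcu}: a trialité-stable generating family yields a trialité-stable subgroup. So part (i) is simply a matter of writing this down cleanly.

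For part (ii), the hypothesis $1\le r\le s\le 2r$ guarantees that $P^r(\Lambda)/P^s(\Lambda)$ is abelian and that the Cayley map induces a group isomorphism $\mathfrak A_r(\Lambda)/\mathfrak A_s(\Lambda)\isomap P^r(\Lambda)/P^s(\Lambda)$ (this is standard, cf. \cite{M}; for $s\le 2r$ the higher-order terms in $C(X)=1+X+\tfrac{X^2}{2}+\cdots$ land in $\mathfrak A_s$). To see that this isomorphism commutes with trialité, I would argue that trialité acts on $\Spin_F(V)$, hence on $P^1(\Lambda)$ via the unique section $s$, and that its differential on $\sp\,\mathfrak{so}_F(V)$ is precisely $\mathbf\Gamma$; the Cayley transform, being an algebraic map defined by the same rational expression at the group and Lie-algebra level, is equivariant for these compatible actions — more precisely one checks $\nu(C(X))=C(d\nu(X))$ on the generators $D_i$, $U_{i,j}$ using the explicit trialité formulas of Lemmas~\ref{radiciels} and \ref{diagonaux} and \eqref{GLW}, \eqref{cdcu}, and extends by the group law modulo $P^s$. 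Passing to $\Gamma$-fixed points: since $\Gamma$ has order prime to $p$ and the groups involved are pro-$p$, taking invariants is exact on the relevant exact sequences of abelian pro-$p$-groups, so $(\mathfrak A_r/\mathfrak A_s)^{\mathbf\Gamma}=\mathfrak A_r^{\mathbf\Gamma}/\mathfrak A_s^{\mathbf\Gamma}$ and similarly for $P$, and the isomorphism restricts to the claimed one. The point to be careful about is that $\mathbf\Gamma$-invariants of $\mathfrak A_r$ are genuinely $\mathfrak A_r\cap\mathfrak g_2(F)$, which follows from $\mathfrak g_2(F)=\mathfrak{so}_F(V)^{\mathbf\Gamma}$ recalled in \S\ref{par21}.

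For part (iii), the map $b+\mathfrak A_{1-r}(\Lambda)\mapsto\psi_b$ with $\psi_b(x)=\psi\circ\tr(b(x-1))$ is the standard $P(\Lambda)$-equivariant duality isomorphism $\mathfrak A_{1-s}(\Lambda)/\mathfrak A_{1-r}(\Lambda)\isomap(P^r(\Lambda)/P^s(\Lambda))^\wedge$, which works because the trace pairing $\tr(XY)$ on $\End_F(V)$ is nondegenerate, restricts nondegenerately to $\mathfrak{so}_F(V)$, and identifies the dual lattice of $\mathfrak A_k$ with $\mathfrak A_{1-k}$ (here the invariant-$1$ autoduality of $\Lambda$ enters). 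For the commutation with trialité one needs to know that each $\nu\in\mathbf\Gamma$ preserves the trace form, i.e. $\tr(d\nu(X)\,d\nu(Y))=\tr(XY)$. This is where the main work lies: trialité is \emph{not} induced by a linear map on $V$, so one cannot invoke conjugation-invariance of the trace directly. I would instead verify the trace-form invariance on a basis of $\mathfrak{so}_F(V)$ adapted to the Witt basis — the $D_i$ and $U_{i,j}$ — using Lemmas~\ref{radiciels} and \ref{diagonaux}: for the pieces fixed individually there is nothing to prove, and for the triples that get permuted one checks that the trace form pairs $U_{i,j}$ with $U_{-i,-j}$ and is constant along each trialité orbit, and likewise for the $D_i$'s (Lemma~\ref{diagonaux} makes the action on the diagonal Cartan explicit, and one reads off that it is an orthogonal transformation of the Cartan for the trace form since the $D_i$ all have the same ``length''). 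Granting trace-form invariance, the formula $\psi_{d\nu(b)}(x)=\psi\circ\tr(d\nu(b)(x-1))$ for $x\in P^r(\Lambda)$: writing $x=C(X)$ so that $x-1$ is, modulo $\mathfrak A_s$, congruent to $X\in\mathfrak A_r$, one gets $\tr(d\nu(b)\,d\nu(X))=\tr(b\,X)\equiv\tr(b(\nu^{-1}(x)-1))$, whence $\psi_{d\nu(b)}(x)=\psi_b(\nu^{-1}(x))$ as desired; the $P(\Lambda)$-equivariance is preserved because $\Gamma$ normalizes $P(\Lambda)$. The hard part throughout is thus the single fact that trialité preserves the trace form on $\mathfrak{so}_F(V)$, which has to be checked by hand on the Witt-basis radicial and diagonal elements rather than deduced formally.
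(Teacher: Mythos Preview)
Your proposal is essentially correct and follows the paper's line for parts (i) and (ii). One caution on (ii): the clause ``the Cayley transform, being an algebraic map defined by the same rational expression \dots, is equivariant for these compatible actions'' is not literally true --- triality is not induced by any linear map on $V$, so there is no formal reason for $C$ to intertwine, and on the diagonal generators one only has $\nu(C(D_i(t)))\equiv C(d\nu(D_i(t)))$ modulo $P^s(\Lambda)$, not equality (for the $U_{i,j}$ the equality is exact via \eqref{cdcu} and Lemma~\ref{radiciels}). You recover with your ``modulo $P^s$'' qualifier, but the paper states the congruence from the outset rather than suggesting exact equivariance; cf.\ Remark~\ref{Moy}. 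For the passage to fixed points the paper lifts explicitly using the direct-sum decomposition of $\mathfrak A_r/\mathfrak A_s$ into triality-stable blocks; your cohomological argument ($|\Gamma|$ prime to $p$) is an equivalent and slightly cleaner way to say the same thing.

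The genuine difference is in (iii), on the invariance $\tr(d\nu(X)\,d\nu(Y))=\tr(XY)$. You propose a direct verification on the Witt-basis elements $D_i$, $U_{i,j}$; this works and is a finite check (for the Cartan one reads off from Lemma~\ref{diagonaux} that triality acts orthogonally, and the radicial pairs are permuted compatibly). The paper instead argues conceptually: both $(X,Y)\mapsto\tr(XY)$ and $(X,Y)\mapsto\tr(d\nu(X)\,d\nu(Y))$ are nondegenerate bilinear forms on $\mathfrak{so}_F(V)$ invariant under the adjoint action (the second because $d\nu$ is a Lie-algebra automorphism); since $\mathfrak{so}_F(V)$ is simple they are proportional, and since they agree on $\mathfrak g_2(F)=\mathfrak{so}_F(V)^{\mathbf\Gamma}$ they are equal. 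This avoids all casework at the cost of invoking the uniqueness of the invariant form on a simple Lie algebra. Your route is self-contained and concrete; the paper's is shorter and structural. (A small slip in your computation: you want $\tr(d\nu(b)\,X)=\tr(b\,d\nu^{-1}(X))$, not $\tr(d\nu(b)\,d\nu(X))=\tr(b\,X)$, to reach $\psi_b(\nu^{-1}(x))$; the paper runs the same chain via $X=C^{-1}(x)$ and part (ii).)
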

\begin{proof} (ii) 
Le transform\'e de Cayley de $x \in \mathfrak A_r(\Lambda)$ est congru \`a $1+x$ modulo  $P^s(\Lambda)$, l'isomorphisme consid\'er\'e est bien celui de \cite[2.13]{M} et 
 \cite[Lemma 3.1]{S2} ; en particulier la bijection de Cayley devient un   morphisme par passage au quotient.   Soit $\nu $ une application de trialit\'e sur $P^1(\Lambda)$ et 
 $d\nu$ sa diff\'erentielle.   Pour \'etablir que  $  C (d\nu ( x))$ est congru 
 \`a $\nu (  C(x))$ modulo $ P^s(\Lambda) $  pour tout $x \in \mathfrak A_r(\Lambda)$,  il suffit de le montrer sur des g\'en\'erateurs, ce qui est imm\'ediat par (\ref{cdcu}) et les lemmes  \ref{radiciels} et \ref{diagonaux}. 
 
 On obtient alors un isomorphisme 
 entre les groupes de points fixes des quotients sous $\mathbf \Gamma$ et $ \Gamma$ 
 respectivement. Il reste \`a voir que 
 $\mathfrak A_{r}(\Lambda)^{\mathbf \Gamma} / \mathfrak A_{s}(\Lambda)^{\mathbf \Gamma}  \simeq \left[\mathfrak A_{r}(\Lambda) / \mathfrak A_{s}(\Lambda)\right]^{\mathbf \Gamma} $ (et de m\^eme du c\^ot\'e droit), c'est-\`a-dire que les points fixes du quotient peuvent se relever dans $\mathfrak A_{r}(\Lambda)^{\mathbf \Gamma}$. Il suffit encore une fois d'\'ecrire le groupe commutatif  $\mathfrak A_{r}(\Lambda) / \mathfrak A_{s}(\Lambda)$ comme somme directe de 
 sous-groupes fixes ou stables par trialit\'e (la partie diagonale, les parties en $U_{i,j}$ fixes, et les 
 sommes de trois parties en $U_{i,j}$ li\'ees par trialit\'e). 
 
 (iii) L'isomorphisme en question est celui de 
  \cite[Lemma 3.2 (ii)]{S2} et le groupe $\left(P^r(\Lambda) / P^s(\Lambda)\right)^\wedge$ est le dual de Pontrjagin de
 $\left(P^r(\Lambda) / P^s(\Lambda)\right) $ ; l'action naturelle de $\Gamma$ sur un \'el\'ement $\psi $ de ce  dual est 
 bien donn\'ee par $\nu . \psi = \psi\circ \nu^{-1}$. 
 
 Rappelons  que pour tous $X, Y \in  \mathfrak{s  o}_F(V)$ et pour tout 
 $d\nu \in \mathbf \Gamma$ on a 
 $\tr \, X Y = \tr \,  d\nu(X) d\nu(Y)$, o\`u la trace est prise dans $\End_F(V)$. 
 (Ces deux formes bilin\'eaires non d\'eg\'en\'er\'ees sur $\mathfrak{s   o}_F(V)$ d\'eterminent chacune un isomorphisme de $\mathfrak{s   o}_F(V)$-modules entre la repr\'esentation adjointe et la repr\'esentation coadjointe  de $\mathfrak{s   o}_F(V)$,  alg\`ebre de Lie simple : elles sont donc proportionnelles, or elles co\"\i ncident sur $\mathfrak g_2(F)$.) 
 On calcule   alors, pour $x \in P^r(\Lambda) $ : 
 $$
 \begin{aligned}
 \psi_{d\nu (b)}(x)=  \psi\circ\tr \left( d\nu(b) (x-1)   \right) &= 
 \psi\circ\tr \left( b \, d\nu^{-1} (x-1)     \right) \\
 &=\psi\circ\tr \left( b d\nu^{-1}( C^{-1}(x))     \right) \\ 
 &= \psi\circ\tr \left( b C^{-1}(\nu^{-1}( x))  \right) \\
  &= \psi\circ\tr \left( b  (\nu^{-1}( x)-1)   \right)
 = \psi_b( \nu^{-1} (x)). 
 \end{aligned}
 $$ 
\end{proof}
 
 \begin{Remark}\label{Moy}
{\rm  Consid\'erons un \'el\'ement $X$ de $\msl_F(W^+) \subset \lg2(F)$ tel que   $X(e_i)= u_i e_i$ pour $i=1,2,3$ avec $u_i \in F$, $v_F( u_i) \ge 1$,  et $u_1+u_2+u_3 =0$. 
 Sa transform\'ee de Cayley $  C(X)  $   appartient \`a $\GL_F(W^+)$ canoniquement identifi\'e \`a un sous-groupe de $\SO_F(V)$. Ainsi $  C(X)$ appartient \`a $\G2(F)$ si et seulement si sa restriction \`a $W^+$ est de d\'eterminant $1$. Or il est facile de voir que $u_1+u_2+u_3 =0$ n'entra\^\i ne pas $  C(u_1)   C(u_2) C(u_3) =1$ (prendre par exemple $u_1=u_2=u, u_3=-2u$). 
 
 La transform\'ee de Cayley n'applique donc pas l'alg\`ebre de Lie $\lg2(F)$ dans le groupe $\G2(F)$. 
 C'est la raison pour laquelle nous ne pouvons pas invoquer le travail ant\'erieur de Moy ({\it Minimal $K$-types for $\G2$ over a $p$-adic field}, Trans. A.M.S. 305(2) (1988), 517--529), bas\'e sur l'assertion contraire.  La proposition \ref{diagcom} requiert une d\'emonstration. 
 }
 \end{Remark}

 \subsection{Caract\`eres semi-simples autoduaux}\label{par23}
 
 Poursuivons avec notre strate semi-simple $[\L, n, 0, \beta]$, ou plut\^ot, provisoirement, 
 avec     une strate semi-simple  $[\L, n, r, \beta]$ de $\mathfrak{so}_F(V)$. 
 Ce paragraphe  est de fait valide dans le cadre g\'en\'eral d'un groupe classique, c'est-\`a-dire celui de \cite{S5} : c'est une cons\'equence imm\'ediate des constructions faites dans  {\it loc.cit.} pour une strate semi-simple gauche et 
 adapt\'ees   dans \cite[\S 8.2]{D} au cas d'une strate semi-simple autoduale. Dans un souci de simplicit\'e, on se contente de l'\'enoncer dans le contexte et les notations d\'ej\`a d\'efinis.

 Rappelons la d\'ecomposition 
   de $V$ associ\'ee \`a $\beta$ (\ref{decomposition}) :
$$
 V = \left[\perp_{i=0}^s V^i \right] \perp
\left[\perp_{j=1}^k ( V^{s + 2j-1} \oplus V^{s + 2j})\right] 
$$
o\`u chaque $V^i$ pour $i \le s$,  ou $ \  V^{s + 2j-1} \oplus V^{s + 2j} \ $ pour $1 \le j \le k $,  
est non d\'eg\'en\'er\'e  et orthogonal \`a tous les autres, et 
$ V^{s + 2j-1} ,V^{s + 2j}  $ sont totalement isotropes en dualit\'e.  
Pour tout $g \in \GL_F(V^{s + 2j-1})$ on notera $\iota_j(g)$ l'unique \'el\'ement de 
$\SO_F(V)$ prolongeant $g$ et agissant trivialement sur 
$V^a$ pour tout $a$ distinct de $s + 2j-1$ et $ s + 2j$. Ceci d\'efinit une injection canonique 
$$\iota_j : \GL_F(V^{s + 2j-1}) \hookrightarrow \SO_F(V).$$

Les sous-groupes ouverts compacts  
$\widetilde H^i(\beta, \L)$, $\widetilde J^i(\beta, \L)$ ($i \ge 1$) et 
 $\widetilde J(\beta, \L)$  de $\GL_F(V)$   attach\'es \`a la strate 
  sont d\'efinis dans  \cite[\S 3.2]{S4} (apr\`es la d\'efinition moins g\'en\'erale de  \cite[\S 3.6]{BK2}).  
Ces sous-groupes sont inva\-riants par l'automorphisme $\tau$ de points fixes $O_F(V)$ 
\cite[\S 3.6]{S4} \cite[\S 8.2]{D}.  
Notons  
$H^i(\beta, \L)={\widetilde H^i(\beta, \L)}^\tau$, $J^i(\beta, \L)={\widetilde J^i(\beta, \L)}^\tau$ ($i \ge 1$) 
 : ce sont des sous-groupes ouverts compacts  de $\O^\prime_F(V)$. 
 
 \begin{Lemma}{\bf \cite[Corollary 3.12]{S4}}\label{betagamma} 
 Soit 
$[\L, n, r, \beta]$ une strate semi-simple autoduale.  Il existe une strate semi-simple autoduale 
 $[\L, n, r+1, \gamma]$  \'equivalente \`a $[\L, n, r+1, \beta]$ et respectant la 
 d\'ecomposition initiale (\ref{decomposition}) de $V$. On a alors :
 $$
 \begin{aligned}
     H^{i}(\beta, \L)&=    H^{i }(\gamma, \L) 
    \text{ pour }   i  > \frac{r+1}{2} ,  \quad  
     J^{i}(\beta, \L)=    J^{i }(\gamma, \L) 
    \text{ pour }   i  \ge \left[\frac{r+1}{2}\right] , \\ 
    H^{i}(\beta, \L)&= (P^{i}(  \L) \cap G_\beta) \;  H^{[\frac{r+1}{2}] +1}(\gamma, \L) 
 \quad    \text{ pour } 1 \le i  \le \frac{r+1}{2},    \\   
    J^{i}(\beta, \L)&= (P^{i}(  \L) \cap G_\beta) \;  J^{[\frac{r+1}{2}] }(\gamma, \L) 
    \quad    \text{ pour } 1 \le i  \le \left[\frac{r+1}{2}\right]  . 
       \end{aligned}
 $$
 \end{Lemma}
 \begin{proof} 
 L'\'enonc\'e analogue pour les groupes $\tilde H^{i}(\beta, \L)$ et $\tilde J^{i}(\beta, \L)$ 
 est pr\'ecis\'ement \cite[Corollary 3.12]{S4} dont les deux premi\`eres \'egalit\'es
  d\'ecoulent. Pour les suivantes il ne s'agit que de v\'erifier que l'on peut prendre les points fixes de l'involution $\tau$ terme \`a terme dans le produit. Le raisonnement est devenu classique : posons $H = \tilde P^{i}(  \L) \cap \tilde  G_\beta $ et 
 $U = \tilde H^{[\frac{r+1}{2}] +1}(\gamma, \L) $. Alors $H$ normalise $U = \tilde H^{[\frac{r+1}{2}] +1}(\beta, \L) $ donc pour tout 
 $h \in H$ on a $UhU \cap H = (U\cap H) h$. Il reste \`a appliquer \cite[Theorem 2.3]{S2} : 
 $(UH)^\tau = U^\tau H^\tau$.  
 \end{proof}

  Pour $0\le m < r$,   notons $\tilde \CC(\L, m,  \beta)$   l'ensemble des caract\`eres semi-simples de   $\tilde H^{m+1}(\beta, \L)$ 
  \cite[Definition 3.13]{S4}.  Il est stable   par l'involution $\tau$  \cite[\S 8.2]{D}, ce qui permet de  d\'efinir 
   l'ensemble $  \CC(\L, m,  \beta)$  des caract\`eres semi-simples autoduaux de   $ H^{m+1}(\beta, \L)$
   exac\-tement comme dans le cas 
  semi-simple gauche \cite[\S 3.6]{S4} :  c'est l'image par la   corres\-pondance de Glauberman   
   de  l'ensemble $\tilde \CC(\L, m,  \beta)^\tau$ des caract\`eres semi-simples $\tau$-invariants de   $\tilde H^{m+1}(\beta, \L)$.   L'usage de la correspondance de Glauberman permet d'affirmer 
  que la restriction \`a   
  $H^{m+1}(\beta, \L)$  d\'etermine une bijection : 
  $$ \tilde \CC(\L, m,  \beta)^\tau \stackrel{\simeq}\longrightarrow \CC(\L, m,  \beta). 
  $$
Puisque  l'action de 
  $\tau$ conserve   $\tilde \CC(\L, m,  \beta)$,   l'ensemble 
 $  \CC(\L, m,  \beta)$ est aussi l'ensemble des res\-trictions des caract\`eres semi-simples  de  $\tilde H^{m+1}(\beta, \L)$, sans condition d'invariance. En effet, si $\tilde \theta $ appartient \`a $\tilde \CC(\L, m,  \beta)$, il en est de m\^eme de $\tilde \theta^\tau$ ; le produit $\tilde \theta \tilde \theta^\tau$ appartient \`a $\tilde \CC(\L, m,  2 \beta)$ 
 et poss\`ede une unique ``racine carr\'ee'', \'el\'ement de  $\tilde \CC(\L, m,  \beta)$ et ayant m\^eme restriction \`a $H^{m+1}(\beta, \L)$ que $\tilde \theta$. On a d\'etaill\'e ce raisonnement parce qu'il n'est pas valide pour $\G2(F)$, on le verra plus loin. 
 En attendant, le
  lemme ci-dessous,  
 cons\'equence imm\'ediate de \cite[Definition 3.13]{S4}, 
explicite dans le point (i)(b) la  diff\'erence entre caract\`eres semi-simples gauches et   autoduaux :    
\begin{Lemma}\label{defss} Soit 
$[\L, n, r, \beta]$ une strate semi-simple autoduale dans $\mathfrak{so}_F(V)$.  Il existe une strate semi-simple autoduale 
 $[\L, n, r+1, \gamma]$  \'equivalente \`a $[\L, n, r+1, \beta]$ et respectant la d\'ecomposition initiale (\ref{decomposition}) de $V$.  Soit $m$ tel que $0\le m < r$. 
Un caract\`ere semi-simple autodual~$\theta$ de $H^{m+1}(\beta, \L)$  
v\'erifie : 
\begin{enumerate}
	\item (a) Pour $i \le s$ la restriction de $\theta$ \`a 
	$$H^{m+1}(\beta, \L)\cap \SO_F(V^i)= H^{m+1}(\beta_i, \L^i)$$ appartient 
	\`a   $\CC(\L^i, m,  \beta^i)$; 
	
	(b) pour $1 \le j \le k $ la restriction de $\theta$ \`a  
	$$H^{m+1}(\beta, \L)\cap \iota_j \left(\GL_F(V^{s + 2j-1} )\right) = \iota_j \left( \tilde H^{m+1}(\beta^{s + 2j-1}, \L^{s + 2j-1}) \right) $$  
	appartient 
	\`a   $\iota_j \left( \tilde \CC(\L^{s + 2j-1}, m, 2 \beta^{s + 2j-1}) \right) $. 
	\item Si $m' = \max\{ m, [(r+1)/2]\}$ la restriction de $\theta$ \`a $H^{m'+1}(\beta, \L)$ 
	est de la forme $\theta_0 \psi_{\beta-\gamma}$ pour un $\theta_0 \in \CC(\L, m',  \gamma)$. 
\end{enumerate}
\end{Lemma} 
   
 \subsection{Trialit\'e et sous-alg\`ebres de composition}\label{par24}
 Le mod\`ele de l'alg\`ebre d'octonions utilis\'e au paragraphe \ref{par22} permet de d\'ecrire la trialit\'e   sur le stabilisateur dans $G'$ de la d\'ecomposition 
 $V = V^0 \perp (W^+ \oplus W^-)$, pour toute {\it sous-alg\`ebre  de composition hyperbolique de dimension $2$ de $V$} (\ref{GLW}). Nous voulons de la m\^eme fa\c con obtenir une description   de la trialit\'e adapt\'ee \`a chaque type de sous-alg\`ebre de composition de $V$. 
 
 Commen\c cons par {\it une sous-alg\`ebre de composition $V^0$ de dimension $4$} et orthogonal $W$. Choisissons un \'el\'ement $a$ de norme non nulle de $W$ de sorte que $W = V^0a$. Soient $t_1$, $t_2$, $t_3$ des   \'el\'ements de $\SO_F(V)$ stabilisant $V^0$ ; on les \'ecrit sous la forme (\S \ref{composition}) : 
 $$
 t_i(x+ya)= \alpha_i u_ixu_i^{-1} + (\delta_i v_iyv_i^{-1}) \, a \qquad (x,y \in V^0) 
 $$
 avec $i \in \{ 1,2,3\}$, $\alpha_i, \delta_i \in (V^0)^1$, $u_i, v_i \in (V^0)^\times$. 
 En utilisant la formule de doublement (\ref{double}) pour d\'evelopper l'\'equation de trialit\'e  (\ref{trialite}) on constate   que, \`a $t_1$ fix\'e,  (\ref{trialite}) a une solution en $t_2$, $t_3$ si et seulement si $Q(u_1/v_1)$ est un carr\'e. Les deux  solutions de (\ref{trialite}) correspondent alors au choix d'une  racine carr\'ee $\xi $ de $Q(u_1/v_1)$  : 
 \begin{equation}\label{trialitedim4}
\left\{\begin{aligned} t_2(x+ya)&= \xi^{-1}\alpha_1 u_1xv_1^{-1} + (\xi\delta_1 v_1yu_1^{-1}) \,  a ,  \\ 
 t_3(x+ya)&= \xi  v_1xu_1^{-1} + (\xi\delta_1 v_1yu_1^{-1}\alpha_1^{-1})\, a, 
 \end{aligned}\right.
 \qquad   \xi^2 = Q(u_1/v_1) . 
 \end{equation}
 
 \bigskip
 Passons au cas d'{\it une sous-alg\`ebre de composition $V^0$ de dimension $2$ et anisotrope}. Son suppl\'ementaire $W$ est un espace vectoriel de dimension $3$ sur $V^0$ op\'erant par multiplication \`a gauche, et la multiplication \`a gauche par les \'el\'ements de $V^0$ est 
 $V^0$-lin\'eaire sur $W$ (\S \ref{composition})~: c'est cette  structure d'espace vectoriel 
 sur $V^0$ qui est utilis\'ee dans la suite   sauf mention expresse du contraire.
 Il faut commencer par \'etablir une formule de produit adapt\'ee \`a la d\'ecomposition 
 $V = V^0 \perp W$. 
   
   Soit $c$ un \'el\'ement non nul et de trace nulle de $V^0$, de sorte que $V^0=F[c]$. 
Soit $\Phi_g$ l'unique forme $V^0$-hermitienne \`a gauche (i.e. pour la structure d'espace vectoriel obtenue par produit \`a gauche) sur $V$ telle que 
  $f = \tr_{V^0/F} \circ \Phi_g$, et soit $\Phi_d$ son analogue \`a droite (i.e. pour la structure d'espace vectoriel obtenue par produit \`a droite sur $V$). Elles sont 
    donn\'ees par : 
  $$\Phi_g(x,y)= \frac 1 2 \left( f(x,y) + c^{-1} f(cx,y) \right) , \quad 
  \Phi_d(x,y)= \frac 1 2 \left( f(x,y) + c^{-1} f(xc,y) \right)
  \qquad (x, y \in V). 
  $$
 Les deux formes co\"\i ncident sur $V^0$ ; pour $x, y \in  W$ on a $xc=-cx$ et 
 $\Phi_d(x,y)= \overline{\Phi_g (x,y)}$. De $f(xy,z)=f(y, \bar x z)= f(x, z \bar y)$ 
 ($x, y ,x \in V$) on d\'eduit ais\'ement : 
 $$\Phi_g(w w', z)= -\Phi_d(w', wz)= -\Phi_d(w'\bar z, w ) \quad \text{ pour tous } 
 w, w' \in W, \  z \in V. 
 $$
 Soient donc $w, w'$ deux \'el\'ements de $W$, cherchons la d\'ecomposition du produit $w w'$ sur $V^0 \perp W$. La composante sur $V^0$ est bien entendu 
 $$\Phi_g(w w', \un)= -\Phi_d(w', w ) =   -\overline{\Phi_g (w',w)}= -\Phi_g(w ,w' ).$$
 La composante sur $W$ est $p(ww')$ o\`u $p$ est la projection orthogonale de $V$ sur $W$. 
 Nous pr\'ef\'erons en donner une expression plus semblable \`a celle du paragraphe \ref{par22} en termes de produit ext\'erieur.  
 Sur le mod\`ele de  \cite[\S5]{GY} cit\'e plus haut, on identifie   $\wedge^3 W $  \`a $V^0$ en fixant une base orthogonale $\{a, b, ab\}$ de $W$ sur $V^0$ 
 (cf. proposition \ref{volume0su}) et en normalisant par $a \wedge b \wedge ab = Q(ab)$. Cela fournit un isomorphisme de $\wedge^2 W $  sur $W^*$ (dual de $ W$ sur $V^0$) identifiant, 
 pour $w, w' \in W$, l'\'el\'ement 
 $w \wedge w'$ \`a la forme lin\'eaire 
 $z \mapsto w \wedge w' \wedge z$ ($z \in W$). Enfin, cette forme lin\'eaire s'\'ecrit de fa\c con unique sous la forme $z \mapsto \Phi_g(z, w \bar \wedge w')$ et  
 l'\'el\'ement $w \bar \wedge w'$ ainsi d\'efini v\'erifie 
 $$
 (\lambda w )\bar \wedge (\lambda'w') = \overline{\lambda \lambda'} \  (w \bar \wedge w')  \quad 
 \text{ i.e. } \quad 
 (w\lambda  ) \bar \wedge (w'\lambda')  =  \lambda \lambda'   \ (w \bar \wedge w') 
 \qquad  
(\lambda, \lambda' \in V^0). 
$$
 Nous obtenons de la sorte une application bilin\'eaire 
 (pour la structure \`a droite de $W \times W$ et \`a gauche de $W$) : 
 $$ W \times W  \longrightarrow W \qquad (w , w') \mapsto w \bar \wedge w'. $$ 
 La projection $p$ v\'erifie elle aussi : 
 $$
 p((w\lambda )( w'\lambda'))= p( [w( w'\lambda')]\bar \lambda) =
 p(w(\bar \lambda' w') ) \bar \lambda = 
 p( \lambda'(ww')) \bar \lambda = \lambda \lambda' p(ww'). 
 $$
  Ces deux applications bilin\'eaires co\"\i ncident sur les \'el\'ements de la base  $\{a, b, ab\}$ (simple v\'erification) donc sont \'egales. 
La formule de produit recherch\'ee s'\'ecrit : 
\begin{equation}\label{barwedge}
(v_0 + w )\ (v'_0 + w' ) = \  [v_0v'_0 -\Phi_g(w ,w' )] \  +  \  [v_0w' + w v'_0 + w \bar \wedge w'] 
\quad (v_0, v'_0 \in V^0, w , w' \in W). 
 \end{equation}
Soient \`a pr\'esent $t_1$, $t_2$, $t_3$ des   \'el\'ements de $\SO_F(V)$ stabilisant $V^0$ 
et $V^0$-lin\'eaires \`a gauche ; on les \'ecrit sous la forme  : 
 $$
 t_i(v_0+w)= \lambda_i v_0 + g_i w  \qquad (v_0 \in V^0, \  w \in W) 
 $$
 avec $i \in \{ 1,2,3\}$, $\lambda_i \in (V^0)^1= \{x \in V^0/ Q(x)=1\}$, $g_i \in \U(W, \Phi_g)$. Notons que le conjugu\'e de $t_i$ est donn\'e par $\hat t_i(v_0+w)= \bar \lambda_i v_0 + g_i w$.
 On d\'eveloppe \`a l'aide de la formule  de produit (\ref{barwedge}) l'\'equation de trialit\'e  (\ref{trialite}). Elle \'equivaut \`a : 
 $$ \lambda_1=\lambda_2 \lambda_3 \, , \ \  \lambda_1 \Phi_g(w ,w' ) = \Phi_g(g_2w ,g_3w' ) \, , 
 \ \  g_1= \lambda_2g_3 = \bar \lambda_3 g_2 \, , \ \ g_1(w \bar \wedge w') = g_2w \bar \wedge g_3w'
 \, , 
 $$
 pour tous $w, w' \in W$. Comme l'application $\bar \wedge$ v\'erifie $g w \bar \wedge g w'
 = \overline{\det_{V^0}(g)} \, g( w \bar \wedge  w')$ pour tout $g \in \GL_{V^0}(W)$, on  
 conclut   que $(t_1, t_2, t_3)$ comme ci-dessus forment un triplet reli\'e si et seulement si $\lambda_1 \, \overline{\det_{V^0}(g_1)}$ est le carr\'e d'un \'el\'ement
 $\xi = \lambda_3$  de norme $1$ de $V^0$ et 
 \begin{equation}\label{trialitedim2}
\left\{\begin{aligned} t_2(v_0+w)&= \xi^{-1}\lambda_1 v_0 + \xi \, g_1 w ,  \\ 
 t_3(v_0+w)&= \xi   v_0 + \xi \bar \lambda_1 \, g_1 w ,  
 \end{aligned}\right.
 \qquad  \xi \in V^0, \ Q(\xi)=1, \  \xi^2 = \lambda_1 \, \overline{\det_{V^0}(g_1)}   . 
 \end{equation}

 \subsection{Trialit\'e et caract\`eres semi-simples}\label{par25}
Nous sommes maintenant pr\^ets \`a r\'esoudre le probl\`eme de la stabilit\'e des 
caract\`eres semi-simples autoduaux par trialit\'e. 
Soit $[\L, n, 0, \beta]$ une strate semi-simple  de $\mathfrak g_2(F)$ 
et soit $V = V^0 \perp W$ la d\'ecomposition de $V$ corres\-pondante : $V^0$ est le noyau de $\beta$ 
(\S \ref{strates.3}). 
Rappelons que si $V^0$ est de dimension $2 $ et d\'eploy\'ee, $W$ poss\`ede une polarisation compl\`ete canonique $W = W^+ \oplus W^-$, d'o\`u une injection cano\-nique $\iota$ de $\GL_F(W^+)$ dans $\SO_F(W)$.

  \begin{Lemma}\label{ex2.1}
  Soit $[\L, n, r, \beta]$ une strate semi-simple  de $\mathfrak g_2(F)$.
  Les sous-groupes $H^i(\beta, \L)$ et  $J^i(\beta, \L)$, $i \ge 1$,  sont stables par trialit\'e.  
 \end{Lemma}
 \begin{proof}  Comme d'habitude ceci se d\'emontre par induction sur une suite d'approxi\-ma\-tion de la strate. Pour une strate nulle, la proposition   \ref{diagcom} nous assure que les groupes 
 $P^i(\L)  $ sont stables par trialit\'e, celle-ci \'etant bien d\'efinie sur le pro-$p$-groupe $P^1(\L)$ 
 (\S \ref{par21}) qui contient  les sous-groupes consid\'er\'es. 
 Soit  alors $[\L , n, r+1, \gamma ]$  
 une strate semi-simple de $\lg2(F)$   \'equivalente \`a $[\L , n, r+1, \beta ]$   (corollaire \ref{raffinement1}).  On a 
  $J^i(\beta, \L) = (P^i(\L) \cap G_\beta) J^{[\frac{r+1}{2}]} (\gamma, \L)$
   et une relation analogue pour les $ H^i$ (Lemme \ref{betagamma}). Supposons $J^{[\frac{r+1}{2}]} (\gamma, \L)$ 
    stable par trialit\'e.  
 Comme $\beta$ est un \'el\'ement de $\mathfrak{so}_F(V)$ fix\'e par l'action de   $\mathbf \Gamma$, son centralisateur dans $\Spin_F (V) $ est stable par l'action de $\Gamma $ (\S \ref{par21}) et son intersection avec $P^i(\L) $ 
 (via l'unique section de $P^i(\L) $ dans $\Spin_F (V)$) l'est aussi, $J^i(\beta, \L)$ est donc stable par trialit\'e, de m\^eme que $H^i$.  
 \end{proof}

\begin{Theorem}\label{carfixes}
 Soit $\theta$ un caract\`ere semi-simple autodual de 
$H^1(\beta, \Lambda)$. Alors les images de $\theta$ par les applications de trialit\'e sont   des caract\`eres semi-simples autoduaux de 
$H^1(\beta, \Lambda)$
si et seulement si l'une des conditions suivantes est r\'ealis\'ee :  
\begin{enumerate}
	\item $V^0$ est de dimension $4$ ou $8   $ ; 
	\item $V^0$ est anisotrope  de dimension $2$   et la restriction de $\theta$ \`a $ H^1(\beta, \Lambda) \cap \U(W, V^0/F)$ est   triviale sur l'intersection 
	de $ H^1(\beta, \Lambda)$ avec le centre de $\U(W, V^0/F)$ ; 
	\item $V^0$ est d\'eploy\'ee  de dimension $2$    et la restriction de $\theta$ \`a $ H^1(\beta, \Lambda) \cap \iota(\GL_F(W^+))$ est   triviale sur l'intersection de $H^1(\beta, \Lambda)$ 
	avec le centre de $\iota(\GL_F(W^+))$. 
\end{enumerate}
En outre, dans ces conditions, le caract\`ere $\theta$ est en fait {\em fixe} par trialit\'e. 
\end{Theorem}

On appellera {\it caract\`ere semi-simple sp\'ecial } de 
$H^1(\beta, \Lambda)$ un caract\`ere semi-simple autodual de 
$H^1(\beta, \Lambda)$ v\'erifiant une des conditions du th\'eor\`eme. 
On notera $\mathscr C^0(\Lambda, \beta)$ 
l'ensemble  des caract\`eres semi-simples sp\'eciaux de $H^1(\beta, \Lambda)$. 
Ces caract\`eres sont fixes 
 par trialit\'e. 

\begin{proof} 
Commen\c cons par rappeler que par d\'efinition, tout caract\`ere 
semi-simple autodual de $H^1(\beta, \Lambda)$  est trivial sur  $H^1(\beta, \Lambda) \cap \SO(V^0)= P^1(\Lambda\cap V^0)$,  
puisque la strate $[\L, n, 0, \beta]$  est somme d'une strate {\it nulle} sur $V^0$ et de strates simples non nulles sur $W$. C'est pourquoi les conditions donn\'ees sont n\'ecessaires. En effet, si $V^0$ est anisotrope de dimension~$2$, l'intersection de $H^1(\beta, \Lambda)$ avec le centre de $\U(W, V^0/F)$ s'identifie  \`a 
$$(1 +  \mathfrak p_{V^0})^1 = \{ \mu \in   1 +  \mathfrak p_{V^0} / Q(\mu)=1\}$$ via 
$\mu  \mapsto 
[v_0+w \mapsto  v_0 + \mu w ]$   ($v_0 \in V^0$, $ w \in W$). Soit $t_1(v_0+w)= \lambda^2 v_0 +  w$ 
avec $\lambda \in   (1 +  \mathfrak p_{V^0})^1$. D'apr\`es (\ref{trialitedim2}), 
$\hat t_2 : v_0+w \mapsto v_0 + \lambda w $ est image de $t_1$ par trialit\'e, donc $\theta (\hat t_2) $ doit \^etre trivial, c.q.f.d. 
 Le cas (iii) se traite de fa\c con analogue en utilisant (\ref{GLW}). 
 
 Montrons maintenant que ces conditions sont suffisantes, c'est-\`a-dire que {\it si $\nu$ est une application de trialit\'e et $\theta$ un caract\`ere 
semi-simple autodual de $H^1(\beta, \Lambda)$ v\'erifiant (i), (ii) ou (iii), alors $\theta\circ \nu$ est encore un caract\`ere 
semi-simple autodual de $H^1(\beta, \Lambda)$.}  

La proposition
  \ref{diagcom} (iii) est le premier ingr\'edient de la d\'emonstration  puisque la restriction de 
$\theta$ \`a $P^{[n/2]+1}(\L )$ est \'egale \`a  $\psi_\beta$. Elle fournit : 
\begin{equation}\label{bottom}
 \theta\circ \nu(x) = \theta(x)  \quad \text{ pour tout }   x \in P^{[n/2]+1}(\L ). 
 \end{equation}
 
Commen\c cons par le cas (i) o\`u  il n'y a pas de condition suppl\'ementaire sur $\theta$. 
Si $\beta$ est nul,  le caract\`ere  est trivial. Si $\dim V^0=4$,    la strate  $[\L, n, n-1, \beta]$ 
est semi-simple (lemme \ref{centraldim4}) de sorte que 
\begin{equation}\label{H1dim4}
\begin{aligned}
H^1(\beta , \L ) &= \left( P^1(\L^0) \times H^1(\beta_W, \L_W) \right) P^{[n/2]+1}(\L ) \\
 &= \left( P^1(\L^0) \times  (P^1(\L_W)\cap G_{\beta_W}) \right) P^{[n/2]+1}(\L ).
\end{aligned}
\end{equation}
Le caract\`ere $\theta$ est trivial sur $P^1(\L^0) $. D'apr\`es la formule de trialit\'e 
  (\ref{trialitedim4}),  si $t_1$ appartient \`a $\SO_F(V^0) \times \SO_F(W)$, les actions de $t_1$ et  $\nu(t_1)$ sur $V^0$ ou $W$ diff\`erent d'une multiplication \`a droite, de la forme $y \mapsto yz$ ou 
  $y a \mapsto (yz) a $ ($y \in V^0$) dans les notations de la formule, par un \'el\'ement $z$ de $V^0$  et l'on a $Q(z)=1$ puisqu'on reste dans un groupe sp\'ecial orthogonal.

  Supposons   $[\L_W, n, 0, \beta_W]$ simple (cas (\ref{scalairesu})  du paragraphe \ref{strates.4}). Sur $ P^1(\L_W)\cap G_{\beta_W} $ le caract\`ere simple 
  $\theta_{|  H^1(\beta_W, \L_W)}$ 
  factorise par un caract\`ere du d\'eterminant  : $G_{\beta_W} \longrightarrow  
  F[\beta_W]$ \cite[Definition 3.2.1]{BK1}.  Le d\'eterminant de la multiplication \`a droite par $z$ dans $V^0$ vu comme espace vectoriel sur $ F[\beta_W] $ est la norme de $z$ dans l'extension $ F[\beta_W] [z]$, soit 
  $Q(z)=1$. Finalement $\theta \circ \nu = \theta$, d'o\`u la stabilit\'e voulue. 
  
 Sinon (cas (\ref{scalairegl2})  du paragraphe \ref{strates.4}) 
  l'action de $\beta_W$ a deux sous-espaces propres, qui sont  $   W_\lambda=( e^+_{F[c]}V^0)a \, $ et  $ \, W_{-\lambda}= ( e^-_{F[c]} V^0)a \,  $  
 dans les notations de la d\'emonstration du  lemme~\ref{dim4}. Le caract\`ere $ \theta_{|   H^1(\beta_W, \L_W)}$  poss\`ede  une d\'ecomposition d'Iwahori par rapport au sous-groupe de Levi $M_W$ stabilisant ces sous-espaces propres et \`a un sous-groupe parabolique de facteur de Levi $M_W$\cite[\S 5.3]{S5}.
 Sur l'intersection avec  $M_W$, 
 identifi\'ee \`a  $\tilde H^1(\lambda, \L_{W_\lambda})$, il factorise par un caract\`ere du d\'eterminant.  Une multiplication \`a droite 
 par $ z \in V^0$ conserve $e^+_{F[c]}V^0$ et $e^-_{F[c]} V^0$ et un petit calcul dans    $V^0$, d\'ecompos\'e comme d'habitude en 
 $V^0 = F[c] \perp F[c]b$ avec $Q(b) \ne 0$, montre que le d\'eterminant de sa restriction \`a 
 $e^+_{F[c]}V^0$ est $Q(z)$. On a donc comme pr\'ec\'edemment $\theta \circ \nu = \theta$.

  \bigskip 
  Pla\c cons-nous \`a pr\'esent dans le cas (ii).  Soit $r$, $0 \le r \le n$ le plus grand entier tel que la strate  $[\L, n, r, \beta]$ soit semi-simple : on raisonne par induction
   (d\'ecroissante) sur $r$ pour montrer qu'un caract\`ere semi-simple autodual 
   trivial sur l'intersection 
	de $ H^1(\beta, \Lambda)$ avec le centre de $\U(W, V^0/F)$
  est fixe par trialit\'e. Si $r=n$ la strate est nulle et $\theta$ est trivial. 
Sinon, soit 
  $[\L, n, r+1, \gamma]$ une strate semi-simple de $\mathfrak g_2(F)$ 
  \'equivalente \`a 
   $[\L, n, r+1, \beta]$ et de m\^eme type (th\'eor\`eme \ref{remontee}). 
 D'apr\`es le lemme     \ref{betagamma} on peut \'ecrire   
 $$
H^1(\beta , \L ) = \left( P^1(\L ) \cap G_\beta \right) H^{[\frac{r+1}{2}] +1}(\gamma, \L)  , 
$$ 
produit de  deux sous-groupes stables par trialit\'e.
 D'apr\`es le lemme  \ref{defss} (ii), la restriction de $\theta$ \`a 
 $H^{[\frac{r+1}{2}] +1}(\gamma, \L) =H^{[\frac{r+1}{2}] +1}(\beta, \L) $ est de la forme 
 $\theta_0 \psi_{\beta-\gamma}$ pour un $\theta_0 \in \CC(\L, m',  \gamma)$~; elle est  fixe par trialit\'e vu  (\ref{bottom}) et l'hypoth\`ese inductive (noter que ${\beta-\gamma}$ est de trace nulle sur $V^0$ donc $\psi_{\beta-\gamma}$ est trivial sur le centre de $\U(W, V^0/F)$).  
 Ensuite vu le lemme  \ref{central}  : 
$$   P^1(\L ) \cap G_\beta   =  P^1(\L^0) \times \left( P^1(\L_W)\cap U(W, V^0/F)
\cap G_\beta \right)  .$$
   Soit $t_1 $ un \'el\'ement de ce sous-groupe.  
 D'apr\`es la formule de trialit\'e  (\ref{trialitedim2}) 
  les actions de $t_1$ et  $\nu(t_1)$ sur   $W$  diff\`erent d'une multiplication \`a gauche  par un \'el\'ement $z$ de $V^0$ de norme $1$, c'est-\`a-dire d'un \'el\'ement du centre de $U(W, V^0/F)$.  
 L'hypoth\`ese assure donc que  $\theta \circ \nu $ et $ \theta$ co\"\i ncident sur ce sous-groupe, c.q.f.d.

  \bigskip
Le cas (iii) se traite de la m\^eme mani\`ere en utilisant le lemme \ref{centralGL}  au lieu du lemme 
\ref{central} 
:    
d'apr\`es la formule de trialit\'e (\ref{GLW}), si $t_1 \in O^\prime(V)$ conserve $V^0$, $W^+$ et $W^-$, les actions de $t_1$ et  $\nu(t_1)$ sur   $W^+$  diff\`erent d'un scalaire, d'o\`u l'\'egalit\'e voulue 
  $\theta \circ \nu = \theta$. 
  \end{proof}

\setcounter{section}{2}
  
\Section{Caract\`eres semi-simples de $\G2(F)$ et entrelacement} \label{par3} 

Soit $[\L, n, 0, \beta]$ une strate semi-simple  de $\mathfrak g_2(F)$. Nous lui associons une famille de ca\-rac\-t\`eres semi-simples $\bar \theta$ du groupe  $\oH^1(\beta, \L) = H^1(\beta, \L) \cap  \G2(F)$  dont nous d\'eterminons l'entrelacement et le comportement par transfert (\ref{par31}).  
 A chaque  tel caract\`ere    est attach\'ee une unique   repr\'esentation  irr\'educ\-tible  $\bar \eta$ de $\oJ^1(\beta, \L)$  telle  que $\bar \eta$ contienne $\bar \theta$ (\ref{par32}). Comme dans un groupe lin\'eaire ou classique, l'espace d'entrelacements de $\bar \eta$ est de dimension $0$ ou $1$ (\ref{par33}). 

\subsection{Entrelacement des caract\`eres semi-simples de $\G2(F)$}\label{par31}

Soit $[\L, n, 0, \beta]$ une strate semi-simple  de $\mathfrak g_2(F)$. 
  Les sous-groupes $H^i(\beta, \L)$ et  $J^i(\beta, \L)$, $i \ge 1$,  sont stables par trialit\'e 
  (lemme  \ref{ex2.1}) et  l'on pose 
  $\oH^1(\beta, \L)= \widetilde H^1(\beta, \L)\cap \bar G =
H^1(\beta, \L)^\Gamma $ et  $\oJ^1(\beta, \L)=\widetilde J^1(\beta, \L)\cap \bar G =
J^1(\beta, \L)^\Gamma $. 
Ce sont des sous-groupes ouverts compacts de $\G2(F)$. 
Dans la suite, nous supprimerons   
$(\beta, \L)$ des notations pour certaines d\'emonstrations.

Comme nous l'avons vu dans le th\'eor\`eme \ref{carfixes}, 
le groupe $\Gamma$ d'automorphismes  de $P^1(\L)$ op\`ere sur 
$H^1(\beta, \L)$ et sur l'ensemble   de ses caract\`eres, mais ne conserve  pas l'ensemble des 
caract\`eres semi-simples autoduaux de $H^1(\beta, \L)$ 
: seul l'ensemble des caract\`eres semi-simples {\it sp\'eciaux} de $H^1(\beta, \L)$ est conserv\'e, et du reste fix\'e,   par trialit\'e. 
Cela ne nous emp\^eche pas de prendre 
  mod\`ele sur 
\cite{S3} et d'utiliser la correspondance de Glauberman  entre  l'ensemble des 
classes d'\'equivalence $\Gamma$-invariantes de repr\'esentations irr\'eductibles  de $H^1(\beta, \L)$    et  l'ensemble des classes d'\'equivalence de repr\'esentations irr\'educ\-tibles
de $\bar H^1(\beta, \L)$ \cite[\S 2]{S3}. 
On d\'efinit   l'ensemble  $\bar{\mathscr C} (\Lambda, \beta)$ des {\it caract\`eres semi-simples} de $\bar H^1(\beta, \L)$   comme l'image 
par cette corres\-pondance    de l'ensemble des caract\`eres semi-simples de $H^1(\beta, \Lambda)$ qui sont $\Gamma$-invariants, c'est-\`a-dire des  caract\`eres semi-simples sp\'eciaux. 
L'image par la correspondance de Glauberman d'une repr\'esentation de dimension $1$ est sa restriction, 
donc la restriction des caract\`eres est   une {\it bijection} : 
$$ 
\mathscr C^0(\Lambda, \beta) \quad 
\stackrel{\simeq}\longrightarrow  \quad  \bar{\mathscr C} (\Lambda, \beta)
$$

 \begin{Notation}\label{theta} Soit $\bar{\mathscr C} (\Lambda, \beta)$ l'ensemble des caract\`eres semi-simples de $\bar H^1(\beta, \L)$. Si 
   $\bar \theta $ appartient \`a  $\bar{\mathscr C} (\Lambda, \beta)$ on note $\theta$ le 
  ca\-rac\-t\`ere 
semi-simple sp\'ecial  de $  H^1(\beta, \L)$ de restriction $\bar \theta$ \`a $\bar H^1(\beta, \L)$, et $\widetilde \theta$ le ca\-rac\-t\`ere 
semi-simple $\tau$-invariant de $  \widetilde H^1(\beta, \L)$ de restriction $\theta$ \`a 
$  H^1(\beta, \L)$.
\end{Notation}

Passons aux propri\'et\'es essentielles d'entrelacement et de transfert, en commen\c cant par 
le fait dit 
   ``d'intersection simple''  : 

\begin{Lemma}\label{simpleinter}{\bf \cite[Lemme 2.6)]{S5}.}
Notons $P^1_\beta(\L) = P^1(\L) \cap G_\beta$. 
 Soit $x  \in   G_\beta$. Alors 
 $$ P^1(\L) x  P^1(\L) \; \cap \; G_\beta \; 
 = P^1_\beta(\L) x  P^1_\beta(\L).$$
 \end{Lemma}

 \begin{proof} 
 Celle de {\it loc. cit.} reste valide avec  notre d\'efinition un peu plus large de strate semi-simple gauche. 
 \end{proof}

\begin{Proposition}\label{entrelacement}
Soit $\bar \theta$ un   caract\`ere semi-simple  de $\bar H^1(\beta, \L)$.
L'entrelacement de $\bar \theta$ dans $\G2(F)$ est \'egal \`a 
$\bar J^1(\beta, \L) \; \bar G_\beta \; \bar J^1(\beta, \L)$. 
\end{Proposition}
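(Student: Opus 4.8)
La strat\'egie est de ramener l'\'enonc\'e \`a la propri\'et\'e d'entrelacement correspondante dans $\SO_F(V)$ --- disponible en \'etendant \cite{S5} aux strates semi-simples autoduales --- puis de la faire descendre \`a $\G2(F)$ par un argument de type Glauberman pour le groupe de trialit\'e $\Gamma$, de la m\^eme fa\c con que le passage de $\GL_F(V)$ \`a $\SO_F(V)$ s'effectue via l'involution d'adjonction. Notons $I_{\G2(F)}(\bar\theta)$ l'entrelacement de $\bar\theta$ dans $\G2(F)$. L'inclusion $\bar J^1(\beta,\L)\,\bar G_\beta\,\bar J^1(\beta,\L)\subseteq I_{\G2(F)}(\bar\theta)$ est la plus facile : $\bar J^1(\beta,\L)$ normalise $\bar\theta$ (par restriction \`a $\G2(F)$ du fait analogue pour $\widetilde J^1(\beta,\L)$ et $\widetilde\theta$, cf. \cite[\S 3.6]{BK2}), et $\bar G_\beta$ entrelace $\bar\theta$ (de m\^eme, par restriction du cas de $\GL_F(V)$, o\`u $\widetilde G_\beta$ entrelace trivialement $\widetilde\theta$) ; comme $I_{\G2(F)}(\bar\theta)$ est stable par multiplication \`a gauche et \`a droite par le normalisateur de $\bar\theta$, le produit ci-dessus y est bien contenu. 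Tout repose donc sur l'inclusion r\'eciproque.

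Soit $g\in\G2(F)$ entrela\c cant $\bar\theta$. La premi\`ere \'etape, qui est la principale, consiste \`a montrer que $g$ entrelace le caract\`ere semi-simple sp\'ecial $\theta$ de $H^1(\beta,\L)$ dans $\SO_F(V)$. On utilise ici que $\theta$ est \emph{fixe} par trialit\'e (th\'eor\`eme \ref{carfixes}) et que $\bar\theta$ en est la restriction \`a $H^1(\beta,\L)^\Gamma$. Le point d\'elicat est que, gr\^ace \`a la compatibilit\'e des actions de $\Gamma$ sur les diverses filtrations parahoriques (\S\S\ref{par21}--\ref{par225}) et \`a l'unicit\'e de la section de $\Spin_F(V)$ au-dessus d'un pro-$p$-sous-groupe \cite[Th\'eor\`eme 9.7.3]{W}, pour $g\in\G2(F)$ la conjugaison par $g$ est $\Gamma$-\'equivariante l\`a o\`u elle est d\'efinie ; le pro-$p$-groupe $K := H^1(\beta,\L)\cap gH^1(\beta,\L)g^{-1}$ est donc stable par trialit\'e, de points fixes $K^\Gamma = \bar H^1(\beta,\L)\cap g\bar H^1(\beta,\L)g^{-1}$, et $\theta_{|K}$ comme $x\mapsto\theta(g^{-1}xg)$ sont deux caract\`eres $\Gamma$-invariants de $K$ qui, par hypoth\`ese, co\"\i ncident sur $K^\Gamma$. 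Or, $\gcd(|\Gamma|,p)=1$, de sorte qu'un caract\`ere $\Gamma$-invariant d'un pro-$p$-groupe $K$ est d\'etermin\'e par sa restriction \`a $K^\Gamma$ : en effet, $(K^{\mathrm{ab}})^\Gamma$ est un facteur direct du pro-$p$-module $K^{\mathrm{ab}}$, un caract\`ere $\Gamma$-invariant est trivial sur un suppl\'ementaire $\Gamma$-stable, et $K^\Gamma$ se surjecte sur $(K^{\mathrm{ab}})^\Gamma$ puisque la prise des points fixes sous $\Gamma$ est exacte sur les pro-$p$-groupes \`a action de $\Gamma$ (Schur--Zassenhaus). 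Donc $\theta$ et $\theta^g$ co\"\i ncident sur $K$, c'est-\`a-dire que $g$ entrelace $\theta$ dans $\SO_F(V)$.

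Par le th\'eor\`eme d'entrelacement des caract\`eres semi-simples autoduaux dans $\SO_F(V)$ --- analogue du r\'esultat correspondant de \cite{S5}, obtenu du cas de $\GL_F(V)$ (\cite{S4}) et de la propri\'et\'e d'intersection simple (lemme \ref{simpleinter}) --- on a $g\in J^1(\beta,\L)\,G_\beta\,J^1(\beta,\L)$. Il reste \`a prendre les points fixes sous $\Gamma$ terme \`a terme, comme dans la d\'emonstration du lemme \ref{betagamma}. Rappelons que $J^1(\beta,\L)$ et $G_\beta$ sont stables par trialit\'e (lemme \ref{ex2.1} et sa preuve) et que $P^1_\beta(\L) = P^1(\L)\cap G_\beta = J^1(\beta,\L)\cap G_\beta$ est un pro-$p$-groupe stable par trialit\'e (par le lemme \ref{betagamma}, $P^1(\L)\cap G_\beta\subseteq J^1(\beta,\L)\subseteq P^1(\L)$). \'Ecrivons $g\in J^1(\beta,\L)\,x\,J^1(\beta,\L)$ avec $x\in G_\beta$ ; comme $g$ est fixe par $\Gamma$, cette double classe est stable par trialit\'e, et le lemme \ref{simpleinter} jointe aux inclusions pr\'ec\'edentes donne $J^1(\beta,\L)\,x\,J^1(\beta,\L)\cap G_\beta = P^1_\beta(\L)\,x\,P^1_\beta(\L)$. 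Puisque $\Gamma$ agit sur l'orbite pro-$p$ $P^1_\beta(\L)\,x\,P^1_\beta(\L)$ et que $|\Gamma|$ est premier \`a $p$, un argument de Schur--Zassenhaus fournit un repr\'esentant $x'\in\bar G_\beta$ fixe par $\Gamma$ tel que $P^1_\beta(\L)\,x'\,P^1_\beta(\L) = P^1_\beta(\L)\,x\,P^1_\beta(\L)$, d'o\`u $J^1(\beta,\L)\,x'\,J^1(\beta,\L) = J^1(\beta,\L)\,x\,J^1(\beta,\L)\ni g$ ; on conclut par la version pour $\Gamma$ de \cite[Theorem 2.3]{S2} (utilis\'ee pour $\tau$ dans la preuve du lemme \ref{betagamma}) que $(J^1(\beta,\L)\,x'\,J^1(\beta,\L))^\Gamma = \bar J^1(\beta,\L)\,x'\,\bar J^1(\beta,\L)$, donc $g\in\bar J^1(\beta,\L)\,\bar G_\beta\,\bar J^1(\beta,\L)$, comme voulu.

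La difficult\'e principale est la premi\`ere \'etape : ramener l'entrelacement de $\bar\theta$ \`a celui de $\theta$. C'est l\`a qu'il faut exploiter pr\'ecis\'ement que les caract\`eres semi-simples sp\'eciaux sont \emph{fixes} (et pas seulement pr\'eserv\'es) par trialit\'e, et contr\^oler l'action de $\Gamma$ sur les groupes $H^1(\beta,\L)\cap gH^1(\beta,\L)g^{-1}$, qui n'ont pas de description \'evidente sur $V$ ; l'hypoth\`ese $p\ne 2,3$, garantissant $\gcd(|\Gamma|,p)=1$, est essentielle tout du long, aussi bien pour cette r\'eduction que pour la descente terme \`a terme sur les points fixes. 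Le reste est une combinaison devenue classique de l'entrelacement connu dans $\GL_F(V)$, de la propri\'et\'e d'intersection simple et de la machinerie de Glauberman.
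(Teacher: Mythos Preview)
Ta d\'emarche est essentiellement celle de l'article : ramener l'entrelacement de $\bar\theta$ \`a celui de $\theta$ dans $\SO_F(V)$, puis prendre les points fixes de $\Gamma$ terme \`a terme \`a l'aide de la propri\'et\'e d'intersection simple (lemme \ref{simpleinter}) et de \cite[Theorem 2.3]{S2} pour le groupe $\Gamma$. L\`a o\`u l'article cite \cite[Corollaire 2.5]{S3} pour affirmer directement que l'entrelacement de $\bar\theta$ est la partie $\Gamma$-fix\'ee de celui de $\theta$, tu donnes une preuve \'el\'ementaire de ce fait dans le cas des caract\`eres (un caract\`ere $\Gamma$-invariant d'un pro-$p$-groupe est d\'etermin\'e par sa restriction aux points fixes) ; cet argument est correct et rend la r\'eduction plus autonome.

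Il y a toutefois une impr\'ecision r\'ecurrente qu'il faut corriger : tu \'ecris que $G_\beta$ est \og stable par trialit\'e\fg{} et que $\Gamma$ \og agit sur l'orbite $P^1_\beta(\L)\,x\,P^1_\beta(\L)$\fg{}, mais $\Gamma$ n'agit pas sur $\SO_F(V)$ --- seulement sur $\Spin_F(V)$, et donc sur les pro-$p$-sous-groupes de $\SO_F(V)$ via l'unique section. C'est pr\'ecis\'ement la raison pour laquelle l'article rel\`eve syst\'ematiquement tout \`a $\Spin_F(V)$ : on y remplace $\theta$ par $s(\theta)$, $G'_\beta$ par $\pi^{-1}(G'_\beta)$, et l'on applique \cite[Theorem 2.3]{S2} avec $U=s(J^1)$ et $H=\pi^{-1}(G'_\beta)$, la v\'erification de l'hypoth\`ese \og d'intersection simple\fg{} dans $\Spin_F(V)$ se ramenant au lemme \ref{simpleinter} via la section. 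Ton dernier paragraphe devient rigoureux une fois r\'e\'ecrit dans $\Spin_F(V)$ ; en particulier l'argument de Schur--Zassenhaus pour trouver un repr\'esentant $\Gamma$-fix\'e dans la double classe est d\'ej\`a contenu dans la preuve de \cite[Theorem 2.3]{S2} et n'a pas besoin d'\^etre d\'etaill\'e s\'epar\'ement. De m\^eme, dans ta premi\`ere \'etape, la stabilit\'e de $K=H^1\cap gH^1g^{-1}$ sous $\Gamma$ et l'\'egalit\'e $K^\Gamma=\bar H^1\cap g\bar H^1 g^{-1}$ sont correctes mais doivent \^etre justifi\'ees en relevant $g\in\G2(F)$ en son unique ant\'ec\'edent $\Gamma$-fixe dans $\Spin_F(V)$ et en invoquant l'unicit\'e des sections pour identifier $s(gH^1g^{-1})$ au conjugu\'e de $s(H^1)$.
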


\begin{proof} Reprenons l'argument de \cite[Proposition 3.27]{S4}, \`a base de correspondance de Glauberman. La premi\`ere \'etape consiste \`a passer de $\widetilde G$ \`a $G$ : l'entrelacement  de $\theta$ dans $G$ est l'intersection avec $G$ de celui de $\widetilde \theta$, soit $\widetilde J^1\widetilde G_\beta \widetilde J^1 \cap G = J^1 G_\beta  J^1$
(\cite[Corollaire 2.5]{S3}, \cite [Th\'eor\`eme 3.22]{S4}, \cite[\S 2]{S2}) et son entrelacement dans $G^\prime$ est $J^1 G_\beta^\prime  J^1$.

La  trialit\'e n'est pas d\'efinie sur le groupe $G^\prime$ :  nous devons relever  $\theta$ en un caract\`ere $s(\theta)$ du sous-groupe $s(H^1)$ de $\Spin_F (V)$, o\`u $s$ est la section de $P^1(\L)$ (\S \ref{par225}). L'unicit\'e  d'une section homomorphe d'un pro-$p$-sous-groupe (\S \ref{par21}) entra\^\i ne : 
$$ 
\forall \; g \in \Spin_F (V) \quad 
s\left(H^1\cap \pi(g)^{-1}H^1 \pi(g) \right) = s(H^1) \cap 
g^{-1} s(H^1) g .
$$
Donc l'entrelacement de $s(\theta)$ est image inverse de celui de $\theta$, c'est-\`a-dire $s(J^1) \pi^{-1}( G_\beta^\prime  ) s(J^1)$.

Le caract\`ere $\bar \theta$ s'obtient aussi bien par restriction de $s(\theta)$ \`a $\bar H^1$, c'est-\`a-dire par une corres\-pondance de Glauberman appliqu\'ee \`a l'action de $\Gamma$ sur $s(H^1)$ : alors \cite[Corollaire 2.5]{S3} nous dit que l'entrelacement de $\bar \theta$ est  $\left(s(J^1) \pi^{-1}( G_\beta^\prime  ) s(J^1) \right)\cap \bar G = \left(s(J^1) \pi^{-1}( G_\beta^\prime  ) s(J^1) \right)^\Gamma$. 

Il n'y a plus qu'\`a \'etablir l'\'egalit\'e $ \left(s(J^1) \pi^{-1}( G_\beta^\prime  ) s(J^1) \right)^\Gamma= \bar J^1 \; \bar G_\beta \; \bar J^1$. Il suffit pour cela d'appliquer  le Th\'eor\`eme 2.3 de  \cite{S2} \`a $\Gamma$, groupe r\'esoluble d'ordre $6$ premier \`a $p$ (voir la remarque finale de {\it loc. cit.} \S 2) agissant sur  $  \Spin_F (V)$, au pro-$p$-sous-groupe $U = s(J^1)$ et \`a $H = \pi^{-1}( G_\beta^\prime  )$, stable par trialit\'e puisque $\beta$ l'est. On est ainsi ramen\'e \`a la v\'erification de l'hypoth\`ese de ce th\'eor\`eme~:  
 $$
\forall \; g \in \pi^{-1}( G_\beta^\prime  ) \quad 
\left(s(J^1) \; g \; s(J^1) \right) \cap  \pi^{-1}( G_\beta^\prime  ) = s(J^1 \cap G_\beta) \;  g \; s(J^1\cap G_\beta).
$$ 
Remarquons que $J^1 \cap G_\beta = P^1_\beta(\L)$. On a 
(cf. \cite[Th\'eor\`eme 4.7]{S2}) : 
 $$
 \aligned
 s(J^1 \cap G_\beta) \;  g \; s(J^1\cap G_\beta)
 &\subseteq 
 \left(s(J^1) \; g \; s(J^1) \right) \cap  \pi^{-1}( G_\beta^\prime  )  \\
 &\subseteq 
 \left(s(P^1(\L)) \; g \; s(P^1(\L)) \right) \cap  \pi^{-1}( G_\beta^\prime  )
 \endaligned
 $$
 La projection de ce dernier sous-ensemble sur $G^\prime$ est contenue dans 
 $\left(P^1(\L) \; \pi(g) \; P^1(\L) \right) \cap   G_\beta^\prime$, 
 qui est \'egal \`a $P^1_\beta(\L) \; \pi(g) \; P^1_\beta(\L) $ par le lemme \ref{simpleinter}. L'\'egalit\'e voulue en r\'esulte. 
 \end{proof}

La propri\'et\'e dite ``de transfert'' s'\'etend elle aussi aux caract\`eres semi-simples de $G_{2}(F)$. 

\begin{Proposition}\label{transfertG2}
Soient $[\Lambda,n,0,\beta]$ et $[\Lambda',n',0,\beta]$ deux strates semi-simples de $\lg2(F)$. Il existe une bijection canonique 
$$\tau_{\Lambda,\Lambda',\beta} : \bar{\mathscr C} (\Lambda, \beta)  \longrightarrow \bar{\mathscr C} (\Lambda', \beta)$$
qui transforme un caract\`ere semi-simple $\bar \theta$ de $ \bar{\mathscr C} (\Lambda, \beta)$ en l'unique caract\`ere semi-simple $\bar \theta'$ de $ \bar{\mathscr C} (\Lambda', \beta)$ qui est entrelac\'e \`a $\bar \theta$ par un \'el\'ement de $\bar G_{\beta}$. Dans ce cas, tout \'el\'ement de $\bar G_{\beta}$ entrelace les deux caract\`eres.
\end{Proposition}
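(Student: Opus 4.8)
The plan is to imitate the proof of the transfer property for $\GL_F(V)$ (Bushnell--Kutzko \cite{BK1} for simple characters, Stevens \cite{S4} for semisimple ones) and to descend it to $\G2(F)$ through the two Glauberman correspondences already exploited for Proposition~\ref{entrelacement}: first from $\GL_F(V)$ to $\SO_F(V)$ and $\O'_F(V)$ by means of the adjoint involution $\tau$, then from $\O'_F(V)$ to $\G2(F)$ by means of the trialit\'e group $\Gamma$, this last step being carried out inside $\Spin_F(V)$, where $\Gamma$ operates.

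First I would invoke the transfer map in $\GL_F(V)$: for the (non-autodual) semisimple strata underlying $[\Lambda,n,0,\beta]$ and $[\Lambda',n',0,\beta]$ there is a canonical bijection $\widetilde\tau_{\Lambda,\Lambda',\beta}:\widetilde{\CC}(\Lambda,0,\beta)\to\widetilde{\CC}(\Lambda',0,\beta)$, uniquely characterised by the requirement that $\widetilde\theta$ and $\widetilde\tau_{\Lambda,\Lambda',\beta}(\widetilde\theta)$ be intertwined by some element of $\widetilde G_\beta$, and then such that \emph{every} element of $\widetilde G_\beta$ intertwines them. As recalled in \cite[\S 8.2]{D}, this bijection is compatible with $\tau$, hence restricts to $\tau$-invariant semisimple characters; applying the Glauberman correspondence for $\langle\tau\rangle$ — the very construction of the sets $\CC(\Lambda,0,\beta)$ of autodual semisimple characters — and transporting the intertwining statement by \cite[Corollaire 2.5]{S3}, one obtains a canonical bijection $\CC(\Lambda,0,\beta)\to\CC(\Lambda',0,\beta)$ characterised by intertwining via an element of $G'_\beta$, with all of $G'_\beta$ intertwining. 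This is the exact analogue of the first two steps of the proof of Proposition~\ref{entrelacement}.

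Next I would lift everything to $\Spin_F(V)$. Writing $s$ for the unique homomorphic section of $P^1(\Lambda)$ (resp. $P^1(\Lambda')$) into $\Spin_F(V)$ (\S\ref{par225}), uniqueness of such sections implies that $s$ carries intertwining in $\O'_F(V)$ to intertwining in $\Spin_F(V)$, just as in the proof of Proposition~\ref{entrelacement}; so the previous bijection lifts to a canonical bijection between the sets of (autodual) semisimple characters of $s(H^1(\beta,\Lambda))$ and $s(H^1(\beta,\Lambda'))$, characterised by intertwining via an element of $\pi^{-1}(G'_\beta)$, all such elements intertwining. Because $\beta$ is fixed by trialit\'e, $\pi^{-1}(G'_\beta)$ is $\Gamma$-stable (\S\ref{par21}), and the whole characterisation is manifestly $\Gamma$-equivariant: if $\nu\in\Gamma$ and $g\in\pi^{-1}(G'_\beta)$ intertwines $s(\theta)$ with $s(\theta')$, then $\nu(g)\in\pi^{-1}(G'_\beta)$ intertwines $\nu(s(\theta))$ with $\nu(s(\theta'))$, whence by uniqueness $s(\theta)$ is $\Gamma$-fixed if and only if $s(\theta')$ is. By Theorem~\ref{carfixes} the $\Gamma$-fixed autodual semisimple characters are precisely the special ones, so the lifted transfer bijection maps $\mathscr C^0(\Lambda,\beta)$ onto $\mathscr C^0(\Lambda',\beta)$ and is $\Gamma$-equivariant there. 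Applying the Glauberman correspondence for $\Gamma$ (a solvable group of order $6$ prime to $p$) to this $\Gamma$-equivariant bijection on special characters, and recalling that this correspondence is simply restriction to $\oH^1(\beta,\Lambda)$, one gets the desired canonical bijection $\tau_{\Lambda,\Lambda',\beta}:\bar{\mathscr C}(\Lambda,\beta)\to\bar{\mathscr C}(\Lambda',\beta)$. To obtain the intertwining statement one passes to $\Gamma$-fixed points a last time, exactly by the mechanism of the proof of Proposition~\ref{entrelacement}: combining \cite[Corollaire 2.5]{S3} and \cite[Th\'eor\`eme 2.3]{S2} for $\Gamma$ acting on $\Spin_F(V)$, the pro-$p$-group $s(J^1)$ and the $\Gamma$-stable group $\pi^{-1}(G'_\beta)$, together with the simple intertwining Lemma~\ref{simpleinter}, yields both $\bar G_\beta\subseteq I_{\G2(F)}(\bar\theta,\bar\theta')$ (the set of $g\in\G2(F)$ intertwining $\bar\theta$ with $\bar\theta'$) and the fact that $\bar\theta'$ is the unique semisimple character of $\oH^1(\beta,\Lambda')$ intertwined with $\bar\theta$ by an element of $\bar G_\beta$, uniqueness descending from its $\GL_F(V)$-counterpart through the two Glauberman steps.

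The main obstacle is the interaction between the trialit\'e action and the characterising intertwining property in the penultimate step: one has to check carefully that passing through the section $s$ and then to $\Gamma$-fixed points preserves both the characterising property and its uniqueness, so that ``special $\mapsto$ special'' genuinely holds; and in the final step one must ensure that the simple-intertwining machinery underlying Proposition~\ref{entrelacement} — which for $\G2(F)$ relies on the ad hoc intertwining computations forced by the absence of a well-behaved Cayley transform (Remark~\ref{Moy}) — applies verbatim to deliver both $\bar G_\beta\subseteq I_{\G2(F)}(\bar\theta,\bar\theta')$ and the uniqueness of $\bar\theta'$.
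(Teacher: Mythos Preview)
Your overall strategy is the same as the paper's: transfer in $\GL_F(V)$, descend to $\SO_F(V)$ via the $\tau$-Glauberman correspondence, then pass to $\G2(F)$. The descent to autodual characters and the final intertwining and uniqueness statements at the $\G2(F)$ level are handled correctly, and in fact more carefully than in the paper, which simply says ``d'o\`u l'\'enonc\'e par restriction''.

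There is, however, a genuine gap in your argument that special goes to special. You want to conclude $\nu(s(\theta'))=s(\theta')$ from the uniqueness clause of the transfer, but that uniqueness is only stated \emph{within the set of semisimple autodual characters}: to invoke it you must already know that $\nu(\theta')$ lies in $\CC(\Lambda',0,\beta)$. By Theorem~\ref{carfixes} this holds precisely when $\theta'$ is special, which is what you are trying to prove, so the reasoning is circular. You flag this as ``the main obstacle'' but do not resolve it. The paper sidesteps the issue entirely by using not the characterisation ``special $=$ $\Gamma$-fixed'' but the explicit conditions (i)--(iii) of Theorem~\ref{carfixes}: in cases (ii) and (iii), speciality of $\theta$ is triviality on $H^1(\beta,\Lambda)\cap Z$, where $Z$ is the centre of $\U(W,V^0/F)$ or of $\iota(\GL_F(W^+))$. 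One checks that $Z\subseteq G_\beta$ (Lemmas~\ref{central}, \ref{centralGL}) and that the pro-$p$ part $Z\cap H^1(\beta,\Lambda)=Z\cap P^1(\Lambda)$ equals $\{\mu\in 1+\mathfrak p_{V^0}:Q(\mu)=1\}$ (resp.\ $\{\lambda\in 1+\mathfrak p_F\}$), independently of $\Lambda$. Since $1\in G_\beta$ intertwines $\theta$ with $\theta'$, they coincide on $H^1(\beta,\Lambda)\cap H^1(\beta,\Lambda')$, which contains this common subgroup; hence $\theta$ is trivial there if and only if $\theta'$ is. This is what ``compte tenu de la caract\'erisation des caract\`eres sp\'eciaux'' is pointing to, and it closes the gap without any lift to $\Spin_F(V)$.
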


\begin{proof} Les deux strates $[\Lambda,n,0,\beta]$ et $[\Lambda',n',0,\beta]$ sont des strates semi-simples dans $\mathfrak{gl}_F(V)$. Par \cite[Proposition 3.2]{S5}, il existe une bijection canonique $\tau_{\Lambda,\Lambda',\beta}$ entre les caract\`eres semi-simples de $ \widetilde H^1(\beta, \L)$ et ceux de $ \widetilde H^1(\beta, \L')$ d\'efinie par : si $\theta$ est un caract\`ere semi-simple de $ \widetilde H^1(\beta, \L)$, $\theta'=\tau_{\Lambda,\Lambda',\beta}(\theta)$ est le caract\`ere semi-simple de $ \widetilde H^1(\beta, \L')$ entrelac\'e \`a $\theta$ par un (tout) \'el\'ement de $\widetilde G_{\beta}$. Cette bijection commute \`a l'involution $\tau$ (les \'el\'ements de $G_{\beta}$ qui entrelacent deux caract\`eres entrelacent aussi leurs images sous l'action de $\tau$) donc se restreint en une bijection de $\tilde \CC(\L,  \beta)^\tau$ sur $\tilde \CC(\L',  \beta)^\tau$. En composant avec la correspondance de Glauberman, on obtient une bijection $\tau_{\Lambda,\Lambda',\beta}$ des caract\`eres semi-simples autoduaux de $H^1(\beta,\Lambda)$ sur ceux de $H^1(\beta,\Lambda')$ encore caract\'eris\'ee par  (\cite[cor. 2.5]{S3}) :
\begin{itemize}
\item[] $\tau_{\Lambda,\Lambda',\beta}(\theta)=\theta' $ si et seulement si $\theta$ et $\theta'$ sont entrelac\'es par un (tout) \'el\'ement de $G_{\beta}$. 
\end{itemize}
Par cons\'equent, compte tenu de la caract\'erisation des caract\`eres sp\'eciaux de $H^1(\beta,\Lambda)$ et $H^1(\beta,\Lambda')$ (th\'eor\`eme \ref{carfixes}), $\tau_{\Lambda,\Lambda',\beta}$ se restreint en une bijection de $\mathscr C^0(\Lambda, \beta)$ sur $\mathscr C^0(\Lambda', \beta)$, 
d'o\`u l'\'enonc\'e par restriction \`a $\oH^1(\beta,\Lambda)$ et $\oH^1(\beta,\Lambda')$.
\end{proof}

\subsection{Extensions de Heisenberg}\label{par32}
La suite est classique : on passe du caract\`ere semi-simple $\bar \theta$ de  $\bar H^1(\beta, \L)$ \`a une repr\'esentation $\bar \eta$ de  $\bar J^1(\beta, \L)$, soit par une construction directe \`a la Heisenberg, soit par correspondance de Glauberman \`a partir de l'unique repr\'esentation irr\'eductible $\eta$ de  $J^1(\beta, \L)$ contenant $\theta$.  

\begin{Proposition}\label{eta}
Soit $\bar \theta$ un   caract\`ere semi-simple  de $\bar H^1(\beta, \L)$.
  Il existe une unique repr\'e\-sen\-tation irr\'eductible $\bar \eta$ de  $\bar J^1(\beta, \L)$ contenant $\bar \theta$. Sa restriction \`a   $\bar H^1(\beta, \L)$ est multiple de $\bar \theta$ et son  
	 entrelacement est 
	$\bar J^1(\beta, \L) \; \bar G_\beta \; \bar J^1(\beta, \L)$. 
	 Pour $g \in \bar G_\beta $, la dimension de 
	l'espace d'entrelacements  
	$\Hom_{\bar J^1 \cap g^{-1} \bar  J^1 g} \; (\bar \eta, \bar \eta^g)$ est 
	\'egale \`a $1$.  
 \end{Proposition}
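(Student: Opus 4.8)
The plan is to obtain $\bar\eta$ by the same successive descent $\GL_F(V)\rightsquigarrow\O^\prime_F(V)\rightsquigarrow\Spin_F(V)\rightsquigarrow\G2(F)$ already used for the intertwining of $\bar\theta$ in Proposition \ref{entrelacement}, transporting the known properties of the Heisenberg extension through Glauberman correspondences. One starts in $\GL_F(V)$: by \cite{BK2} and \cite[\S 3]{S4} (the generality of autodual strates being handled as in \cite[\S 8.2]{D}), the $\tau$-invariant semi-simple character $\widetilde\theta$ of $\widetilde H^1(\beta,\L)$ admits a unique irreducible representation $\widetilde\eta$ of $\widetilde J^1(\beta,\L)$ containing it; its restriction to $\widetilde H^1(\beta,\L)$ is $\widetilde\theta$-isotypic, $\widetilde\eta$ is $\tau$-invariant, its intertwining in $\GL_F(V)$ equals $\widetilde J^1\widetilde G_\beta\widetilde J^1$, and $\dim\Hom_{\widetilde J^1\cap g^{-1}\widetilde J^1g}(\widetilde\eta,\widetilde\eta^g)=1$ for every $g\in\widetilde G_\beta$.

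Since $\widetilde\eta$ is $\tau$-invariant, the Glauberman correspondence for $\langle\tau\rangle$ (order $2$, prime to $p$) acting on $\widetilde J^1(\beta,\L)$ produces the unique irreducible representation $\eta$ of $J^1(\beta,\L)=\widetilde J^1(\beta,\L)^\tau$ containing $\theta$, with $\eta|_{H^1}$ being $\theta$-isotypic, intertwining $J^1G_\beta^\prime J^1$ in $\O^\prime_F(V)$, and one-dimensional intertwining spaces over $G_\beta^\prime$ — here one uses the simple-intersection property of Lemma \ref{simpleinter} (and its analogue for $J^1$), as in \cite[Proposition 3.27]{S4}. Lifting through the unique section $s$ of $P^1(\L)$ into $\Spin_F(V)$ gives $s(\eta)$ on $s(J^1)$ with the analogous properties, its intertwining being $s(J^1)\pi^{-1}(G_\beta^\prime)s(J^1)$. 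As $\bar\theta$ is special, $\theta$ — hence $s(\theta)$ — is fixed by $\Gamma$ (Theorem \ref{carfixes}), so by uniqueness the isomorphism class of $s(\eta)$ is $\Gamma$-invariant; since $J^1(\beta,\L)$ is $\Gamma$-stable (Lemma \ref{ex2.1}), the Glauberman correspondence for $\Gamma$ (solvable of order $6$, prime to $p$) yields an irreducible representation $\bar\eta$ of $\bar J^1(\beta,\L)=J^1(\beta,\L)^\Gamma$ containing $\bar\theta$. It is the unique irreducible of $\bar J^1(\beta,\L)$ over $\bar\theta$, since any such is the $\Gamma$-Glauberman correspondent of a $\Gamma$-invariant irreducible of $s(J^1)$ over $s(\theta)$, necessarily $s(\eta)$; and compatibility of the Glauberman correspondence with restriction to the $\Gamma$-stable subgroup $\bar H^1(\beta,\L)$ gives that $\bar\eta|_{\bar H^1(\beta,\L)}$ is $\bar\theta$-isotypic.

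For the intertwining, $\bar\theta\subseteq\bar\eta|_{\bar H^1(\beta,\L)}$ forces the intertwining of $\bar\eta$ into that of $\bar\theta$, which is $\bar J^1\bar G_\beta\bar J^1$ by Proposition \ref{entrelacement}; the reverse inclusion is transported from $\GL_F(V)$, where the intertwinings of $\widetilde\theta$ and $\widetilde\eta$ coincide, through the same two Glauberman correspondences together with \cite[Th\'eor\`eme 2.3]{S2} applied to the trialit\'e-stable subgroup $\pi^{-1}(G_\beta^\prime)$ and the simple-intersection Lemma \ref{simpleinter}. Finally, for $g\in\bar G_\beta$ one has $\dim\Hom_{\bar J^1\cap g^{-1}\bar J^1g}(\bar\eta,\bar\eta^g)\ge1$ because $g$ intertwines $\bar\eta$; the reverse inequality comes from $\dim\Hom_{\widetilde J^1\cap g^{-1}\widetilde J^1g}(\widetilde\eta,\widetilde\eta^g)=1$: since $g$ lifts to $\Spin_F(V)^\Gamma$, the intersection subgroups $\widetilde J^1\cap g^{-1}\widetilde J^1g$, $J^1\cap g^{-1}J^1g$ and $s(J^1)\cap g^{-1}s(J^1)g$ are $\tau$- and $\Gamma$-stable, and \cite[\S2]{S3}, \cite[\S2]{S2} then identify the successive intertwining spaces and preserve their dimensions.

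The main obstacle is exactly this multiplicity-one statement. As emphasized in Remark \ref{Moy}, it cannot be obtained by the Cayley-transform computation available for linear and classical groups; instead one must translate the relevant intertwining spaces into equalities of pro-$p$-subgroups of $\bar J^1(\beta,\L)$ and of their analogues in $\GL_F(V)$ and $\SO_F(V)$ — the content of the forthcoming Proposition \ref{essentiel} — and then descend the $\GL_F(V)$ result by taking invariants successively under the adjoint involution and under trialit\'e. Making rigorous that the Glauberman correspondences preserve the dimension of these particular intertwining spaces is the delicate technical point of the argument.
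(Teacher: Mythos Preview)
Your proposal follows the paper's proof essentially step for step: successive Glauberman descent $\widetilde G\rightsquigarrow G'\rightsquigarrow\Spin_F(V)\rightsquigarrow\oG$ for the construction of $\bar\eta$, the lifting to $\Spin_F(V)$ plus \cite[Th\'eor\`eme 2.3]{S2} and Lemma~\ref{simpleinter} for the intertwining, and the explicit deferral of multiplicity one to the machinery of Proposition~\ref{essentiel} followed by descent through $\tau$- and $\Gamma$-fixed points. The only notable variant is uniqueness: the paper argues directly that the alternating form $(x,y)\mapsto\bar\theta([x,y])$ on $\oJ^1/\oH^1$ is nondegenerate (via Lemma~\ref{GammaPerp}), whereas you invoke the bijectivity of the Glauberman correspondence together with \cite[Lemme 2.3]{S3}; both work.

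One point to sharpen: your sentence that \cite[\S2]{S3} and \cite[\S2]{S2} ``identify the successive intertwining spaces and preserve their dimensions'' overstates what those references give --- they control intertwining \emph{sets}, not dimensions of Hom-spaces, and that is precisely why the paper (like you in your final paragraph) routes the multiplicity-one statement through the group-theoretic reformulation of Proposition~\ref{essentiel} and its corollary rather than through Glauberman directly.
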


\begin{proof} On rappelle la notation \ref{theta}. Soit $\widetilde \eta$ l'unique repr\'esentation irr\'eductible  de  $\widetilde J^1$ contenant $\widetilde \theta$ ; son entrelacement est $\widetilde J^1 \widetilde G_\beta \widetilde J^1$ \cite[Corollaire 3.25]{S4}. Par unicit\'e, $\widetilde \eta$ est fix\'ee par $\tau$ \`a \'equivalence  pr\`es. Une premi\`ere correspondance de Glauberman lui 
associe  donc une unique (classe de) repr\'esentation irr\'educ\-tible $\eta$ de $ J^1$ contenant $\theta$ 
 \cite[Th\'eor\`eme 2.1 et Lemme 2.3]{S3}.  Comme $\theta$ est invariant par trialit\'e, la classe de $\eta$ l'est aussi et une deuxi\`eme correspondance de Glauberman lui associe  $\bar \eta$, 
unique (classe de) repr\'esentation irr\'educ\-tible  de $ \bar J^1(\beta, \L)$ contenant $\bar \theta$
(m\^emes motifs). Comme $\bar J^1(\beta, \L)$ entrelace 
$\bar \theta$, la restriction de $\bar \eta $ \`a  
$\bar J^1(\beta, \L)$ est en fait multiple de $\bar \theta$. 

Par ailleurs, la restriction \`a $\bar J^1$ de la forme altern\'ee non d\'eg\'en\'er\'ee $(x,y) \mapsto \theta([x,y])$ 
sur $J^1/ H^1$ d\'efinit une forme altern\'ee non d\'eg\'en\'er\'ee $(x,y) \mapsto \bar\theta([x,y])$ sur 
$\bar J^1/ \bar H^1$ (cf. lemme \ref{GammaPerp}). Il existe donc une unique (classe de) repr\'esentation 
irr\'eductible de $\bar J^1$ contenant $\bar \theta$ : c'est 
$\bar \eta$.

L'entrelacement de $\bar \eta$ est \'evidemment contenu dans celui de $\bar \theta$. Pour obtenir l'inclusion oppos\'ee on est une fois de plus g\^en\'e par le fait que la trialit\'e n'agit pas sur $G$ lui-m\^eme. On rel\`eve donc $ \eta$ en une repr\'esentation $s(\eta)$ de $s(J^1)$ : $\bar \eta$ s'obtient aussi bien par correspondance de Glauberman \`a partir de $s(\eta)$. Or l'entrelacement de $s(\eta)$ est l'image inverse de celui de  $\eta$ soit $ s(J^1) \pi^{-1}( G_\beta^\prime  ) s(J^1) $, gr\^ace \`a la   preuve de \ref{entrelacement} et \`a \cite[Lemme 2.4]{S3}. Les deux m\^emes ingr\'edients fournissent   l'entrelacement voulu pour $\bar \eta$.   

En fait,  seule l'inclusion de l'entrelacement dans  
	$\bar J^1(\beta, \L) \; \bar G_\beta \; \bar J^1(\beta, \L)$ est utilis\'ee dans la 
	d\'emonstration du dernier point, que l'on reporte au paragraphe suivant 
	car elle est assez longue : nous ne pouvons pas  imiter  les d\'emonstrations existantes, qui adaptent  celle de   \cite[Proposition 5.1.8]{BK1}. En effet elles se placent toutes dans l'alg\`ebre de Lie, or nous ne disposons pas d'une application commode de  $\mathfrak g_2(F)$ 
dans $\G2(F)$  puisque la transformation de Cayley ne joue pas ce r\^ole (Remarque \ref{Moy}).
\end{proof}

\subsection{Dimension des espaces d'entrelacement }\label{par33}

Reste \`a \'etablir la propri\'et\'e cruciale de la repr\'esentation $\bar\eta$ : pour  $g \in \bar G_\beta $,  
$$\dim \Hom_{\bar J^1 \cap g^{-1} \bar  J^1 g} \; (\bar \eta, \bar \eta^g) = 1 .$$

Commen\c cons par introduire des notations, pour ce paragraphe uniquement, qui nous permet\-tront de travailler indiff\'eremment dans  un groupe lin\'eaire sur $F$, 
  un groupe classique sur $F$   ou $G_2(F)$.  On note    $\bs G$  un tel groupe, puis $\bs H^1$, $\bs J^1$,    $\bs \theta$   et $\bs \eta$ les groupes, caract\`ere semi-simple et extension de Heisenberg d\'efinis dans $\bs G$ relativement \`a une strate semi-simple $[\L, n, 0, \beta]$  de  $\Lie \bs G $ 
fix\'ee. Soit $g \in \bs G$ commutant \`a $\beta$. On d\'efinit : 
\begin{itemize}
	\item $\bs \theta^g$, $\bs \eta^g$ les repr\'esentations de $g^{-1}\bs H^1 g$ et $g^{-1}\bs J^1 g$ 
 conjugu\'ees de $\bs \theta$, $\bs \eta$ ; 
 \item $\bs K_g = \ker (\bs \theta) \cap  g^{-1}\ker (\bs \theta) g$, 
 $\bs H_g = \bs H^1 \cap  g^{-1}\bs H^1 g$
 et $\bs J_g = \bs J^1 \cap  g^{-1}\bs J^1 g$ ; 
 \item $\bs L_g$ le sous-groupe de $\bs J_g$ engendr\'e par $ \bs H^1 \cap  g^{-1}\bs J^1 g$  et $ \bs J^1 \cap  g^{-1}\bs H^1 g$, 
  c'est-\`a-dire le produit 
$( \bs H^1 \cap  g^{-1}\bs J^1 g)( \bs J^1 \cap  g^{-1}\bs H^1 g)$ ; 
\item $\bs M_g$ le noyau de la forme altern\'ee 
$ (x, y )    \mapsto  \bs \theta([x, y]) $ sur $\bs J_g / \bs H_g  $    vu comme espace vectoriel sur $\mathbb F_p$. On a 
 $\bs K_g \lhd \bs H_g \lhd \bs L_g \lhd \bs M_g \lhd \bs J_g . 
 $ 
\end{itemize}

\begin{Proposition}\label{essentiel} 
La dimension de 
$\Hom_{\bs J_g}(\bs \eta, \bs \eta^g)$ est \'egale \`a $ 1 $ si et seulement si  
\begin{enumerate}
	\item $\bs M_g=\bs L_g$, 
	\item $(\bs H^1 \bs M_g)^\perp =  \bs H^1 \bs J_g $, 
	\item $ ( (g^{-1} \bs H^1g) \bs M_g)^\perp =  (g^{-1} \bs H^1g ) \bs J_g$. 
\end{enumerate} 
\end{Proposition}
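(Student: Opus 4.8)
Le principe est classique et remonte à \cite[Proposition 5.1.8]{BK1}, mais il faut le mener purement au niveau des groupes, sans recours à l'algèbre de Lie, puisque la transformation de Cayley n'applique pas $\lg2(F)$ dans $\G2(F)$ (remarque \ref{Moy}). L'idée maîtresse est d'analyser la restriction de $\bs\eta$ au sous-groupe $\bs L_g$ : comme $\bs\eta$ est une représentation de Heisenberg au-dessus de $\bs\theta$, sa restriction à $\bs H^1\cap g^{-1}\bs J^1g$ (resp. $\bs J^1\cap g^{-1}\bs H^1g$) reste irréductible ou se décompose selon la forme alternée $\bs\theta([\cdot,\cdot])$, et le produit $\bs L_g=(\bs H^1\cap g^{-1}\bs J^1g)(\bs J^1\cap g^{-1}\bs H^1g)$ est le plus petit sous-groupe sur lequel on puisse espérer prolonger à la fois $\bs\theta$ et $\bs\theta^g$. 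On montre qu'il existe un caractère $\bs\vartheta$ de $\bs L_g$ prolongeant simultanément $\bs\theta_{|\bs H^1\cap g^{-1}\bs J^1g}$ et $\bs\theta^g_{|\bs J^1\cap g^{-1}\bs H^1g}$ \emph{si et seulement si} $\bs\theta$ et $\bs\theta^g$ coïncident sur l'intersection $\bs H^1\cap g^{-1}\bs H^1g=\bs H_g$ modulo les commutateurs appropriés — et ceci est précisément assuré par le fait que $g$ entrelace $\bs\theta$ (Proposition \ref{entrelacement}, ou son analogue dans les cas linéaire et classique).

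\textbf{Étapes.} D'abord je poserais le cadre de Mackey : $\dim\Hom_{\bs J_g}(\bs\eta,\bs\eta^g)$ se calcule en restreignant $\bs\eta$ et $\bs\eta^g$ à $\bs J_g$, et ces restrictions se décrivent via la théorie des groupes de type Heisenberg relatifs aux formes alternées induites par $\bs\theta$ et $\bs\theta^g$ sur $\bs J_g/\bs H_g$ et $\bs J_g/(g^{-1}\bs H^1g\cap\bs J_g)$ respectivement. Ensuite, le point (i) $\bs M_g=\bs L_g$ intervient ainsi : $\bs M_g$ est le radical de la forme $\bs\theta([\cdot,\cdot])$ sur $\bs J_g/\bs H_g$, et l'on vérifie que $\bs L_g\subseteq\bs M_g$ toujours ; l'égalité garantit que la représentation $\bs\eta_{|\bs J_g}$ est, à isotypie près, gouvernée par un unique caractère de $\bs M_g$ prolongeant $\bs\theta$, à savoir le $\bs\vartheta$ construit ci-dessus — si $\bs M_g$ était strictement plus grand que $\bs L_g$ il y aurait plusieurs tels prolongements et la multiplicité pourrait dépasser $1$. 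Les points (ii) et (iii), qui affirment que $\bs H^1\bs J_g$ (resp. $(g^{-1}\bs H^1g)\bs J_g$) est l'orthogonal de $\bs H^1\bs M_g$ (resp. de $(g^{-1}\bs H^1g)\bs M_g$) pour la forme alternée globale $\bs\theta([\cdot,\cdot])$ sur $\bs J^1/\bs H^1$, assurent que le caractère $\bs\vartheta$ de $\bs M_g$ se prolonge \emph{de manière unique, à équivalence près}, en la représentation irréductible $\bs\eta_{|\bs H^1\bs M_g}$ d'une part, et d'autre part en $\bs\eta^g_{|(g^{-1}\bs H^1g)\bs M_g}$, de sorte que l'induction de $\bs\vartheta$ de $\bs M_g$ à $\bs J_g$ réalise exactement la composante commune de $\bs\eta_{|\bs J_g}$ et $\bs\eta^g_{|\bs J_g}$, avec multiplicité $1$. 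Réciproquement, si l'une des trois conditions échoue, un comptage de dimensions (dimensions des représentations de Heisenberg $=p^{d/2}$ où $d$ est le rang de la forme alternée) montre que $\Hom_{\bs J_g}(\bs\eta,\bs\eta^g)$ est soit nul, soit de dimension $>1$.

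\textbf{Le point délicat.} L'obstacle principal sera de justifier que les conditions (i)--(iii) sont non seulement \emph{suffisantes} mais \emph{nécessaires}, et surtout de le faire d'une manière qui soit valable \emph{uniformément} pour $\bs G=\GL_F(V)$, un groupe classique, ou $\G2(F)$ : c'est tout l'intérêt de cet énoncé que d'isoler une condition combinatoire sur des sous-groupes, qu'on pourra ensuite vérifier cas par cas en descendant de $\GL(V)$ à $\SO(V)$ (invariants sous $\tau$) puis de $\SO(V)$ à $\G2(F)$ (invariants sous $\Gamma$), en utilisant à chaque étape \cite[Theorem 2.3]{S2} et la correspondance de Glauberman. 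Il faudra en particulier s'assurer que le passage aux points fixes commute avec la prise d'orthogonal pour la forme alternée $\bs\theta([\cdot,\cdot])$ — c'est là qu'interviendra le lemme \ref{GammaPerp} (annoncé mais non encore démontré dans l'extrait) affirmant que $\bar J^1/\bar H^1$ hérite d'une forme alternée non dégénérée, donc que $(\cdot)^\Gamma$ et $(\cdot)^\perp$ sont compatibles. La vérification concrète de (i)--(iii) dans le cas $\G2$ sera ensuite renvoyée aux calculs explicites du paragraphe suivant, utilisant les formules de trialité \eqref{trialitedim4} et \eqref{trialitedim2} et la structure d'immeuble via le lemme \ref{lemme28}.
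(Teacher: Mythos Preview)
Your approach is essentially the paper's: identify the common irreducible constituents of $\bs\eta_{|\bs J_g}$ and $\bs\eta^g_{|\bs J_g}$ as the Heisenberg representations $h(\chi)$ attached to the extensions $\chi$ of the ``glued'' character $\bs\theta_g$ of $\bs L_g$ to $\bs M_g$, then count multiplicities. The paper makes this precise by proving the exact formula
\[
\dim \Hom_{\bs J_g}(\bs\eta,\bs\eta^g)=d_1\,d_2\,[\bs M_g:\bs L_g],\qquad d_1^2=[(\bs H^1\bs M_g)^\perp:\bs H^1\bs J_g],\quad d_2^2=[((g^{-1}\bs H^1g)\bs M_g)^\perp:(g^{-1}\bs H^1g)\bs J_g],
\]
a power of $p$ which is always $\ge 1$ (so ``soit nul'' in your reciprocal does not occur), whence the equivalence with (i)--(iii). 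Your sketch of the multiplicity computation via totally isotropic subspaces containing $\bs H^1\bs M_g$ is exactly how the paper obtains $d_1$.

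One structural point: your ``point d\'elicat'' does not belong to the proof of this proposition. The proposition is a purely abstract statement about Heisenberg extensions in a pro-$p$ group $\bs J^1$, proved uniformly with no mention of $\tau$, $\Gamma$, triality or Glauberman. The descent from $\GL_F(V)$ to $\SO_F(V)$ to $\G2(F)$ is the content of the \emph{corollary} that follows (this is where Lemma \ref{GammaPerp} on the compatibility of $(\cdot)^\perp$ with $(\cdot)^\Gamma$ is invoked), and the final paragraph of \S\ref{par33} simply applies that corollary twice, starting from the known case of $\GL_F(V)$ (\cite[Proposition 3.31]{S4}). You have correctly identified all the ingredients, but you should separate cleanly the abstract characterisation (the proposition) from its stability under passage to fixed points (the corollary) --- that separation is precisely what makes the argument work without a Cayley map.
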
 
\begin{proof}  
La restriction de $\bs \eta$ \`a $ \bs H^1 \cap  g^{-1}\bs J^1 g$ est scalaire, multiple de $\bs \theta$, 
celle de  $\bs \eta^g$ \`a $ \bs J^1 \cap  g^{-1} \bs H^1 g$ est scalaire, multiple de $\bs \theta^g$, or 
  $\bs \theta$ et $\bs \theta^g$ co\"\i ncident sur l'intersection $\bs H_g$. La restriction 
   \`a $\bs L_g$  de toute composante irr\'eductible commune   \`a 
  $\bs \eta{|\bs J_g}$ et     $\bs \eta^g{|\bs J_g}$ 
est donc scalaire et op\`ere par le  caract\`ere $\bs \theta_g$ de $\bs L_g$ qui est l'unique caract\`ere prolongeant les deux pr\'ec\'edents.

Le groupe fini $\bs M_g/\bs K_g$ est commutatif, c'est le centre de $\bs J_g/\bs K_g$. Le  caract\`ere $\bs \theta_g$ de $\bs L_g / \bs K_g$ a donc $[\bs M_g : \bs L_g]$ prolongements \`a $\bs M_g$. Soit $X$ l'ensemble de ces prolongements et 
$\chi \in X$,  la repr\'esentation 
$\cInd_{\bs M_g}^{\bs J_g} \chi $ est  multiple d'une unique repr\'esentation irr\'eductible
  $h(\chi)$, de degr\'e $[\bs J_g : \bs M_g]^{1/2}$ et multiplicit\'e 
$[\bs J_g : \bs M_g]^{1/2}$. Les $h(\chi)$, $\chi \in X$,  sont deux \`a deux non \'equivalentes puisque  leurs caract\`eres centraux sont distincts. En outre  
$$
\cInd_{\bs L_g}^{\bs J_g} \bs \theta_g =\sum_{\chi \in X} \cInd_{\bs M_g}^{\bs J_g} \chi =  \sum_{\chi \in X} [\bs J_g : \bs M_g]^{1/2} \   h(\chi) , 
$$
donc 
toute 
composante irr\'eductible commune  \`a $\bs \eta$ et $\bs \eta^g$  est l'une des repr\'esentations $h(\chi)$.

Le sous-groupe $ \bs H^1 \bs M_g$  de $\bs J^1$ est totalement isotrope pour la forme $\bs \theta([x, y])$. Pour $\chi \in X$,    les caract\`eres $\bs \theta$ de $\bs H^1$ et $\chi$ de $\bs M_g$ 
co\"\i ncident sur $\bs H^1 \cap \bs M_g = \bs H^1 \cap g^{-1}\bs J^1 g$   donc  d\'efinissent un unique caract\`ere $\bs \theta\chi$ de $ \bs H^1 \bs M_g$ ($\bs M_g$ entrelace $\bs \theta$). Soit alors $Y$ un sous-espace totalement isotrope maximal de $\bs J^1$ contenant $\bs H^1 \bs M_g$ et $\xi$ un caract\`ere de $Y$ prolongeant  $\bs \theta\chi$. 
La repr\'esentation $ \bs \eta$ est \'equivalente \`a $\cInd_Y^{\bs J^1} \xi$ :  sa restriction \`a 
$\bs M_g$ contient $\chi$ donc sa restriction \`a $\bs J_g$ contient  $h(\chi)$. 
On  calcule  la multiplicit\'e de  $h(\chi)$ dans $\bs \eta_{|\bs J_g}$ de la fa\c con suivante : 
$$
\aligned 
&[\bs J_g : \bs M_g]^{1/2}   <\bs \eta, h(\chi)>_{\bs J_g} 
  = <\bs \eta,  \cInd_{\bs M_g}^{\bs J_g} \chi)>_{\bs J_g} 
= <\bs \eta, \chi >_{\bs M_g}= < \cInd_Y^{\bs J^1} \xi, \bs \theta\chi >_{\bs H^1\bs M_g} 
\\
&\qquad \qquad = < \cInd_Y^{\bs J^1} \xi, \cInd_{\bs H^1\bs M_g}^{\bs J^1} \bs \theta\chi >_{\bs J^1} 
= < \cInd_Y^{\bs J^1} \xi, \sum_{\xi^\prime}\cInd_{Y}^{\bs J^1} \xi^\prime >_{\bs J^1}  
\endaligned 
$$
o\`u la somme porte sur les prolongements $\xi^\prime$  de $\bs \theta\chi$ \`a $Y$. 
Chacune de ces induites est isomorphe \`a  $\bs \eta$ et il reste 
$$
<\bs \eta, h(\chi)>_{\bs J_g}  =  [\bs J_g : \bs M_g]^{- 1/2} [Y: \bs H^1\bs M_g] = d_1 
$$
La multiplicit\'e  $d_2$ de $h(\chi)$ dans $\bs \eta_{|\bs J_g}$ est donn\'ee par une formule analogue, en travaillant avec le sous-groupe 
$ g^{-1} \bs H^1 g \bs M_g    $ de $g^{-1} \bs J^1 g$. 

L'ensemble des composantes irr\'eductibles communes \`a $\bs \eta$ et $\bs \eta^g$ est 
donc exactement l'ensemble des  $h(\chi)$, $\chi \in X$ et 
la dimension de 
$\Hom_{\bs J_g}(\bs \eta, \bs \eta^g)$ est $d_1d_2  [\bs M_g : \bs L_g] $, une puissance de $p$. 
Cette  dimension est \'egale \`a $1$ si et seulement si 
$\bs M_g=\bs L_g$ et $d_1=d_2=1$. 

Reprenons l'expression de $d_1$ (le cas de $d_2$ est \'evidemment semblable). On a  
$[\bs J_g : \bs  M_g]= [\bs H^1 \bs J_g : \bs H^1 \bs M_g]$ et $[Y: \bs H^1\bs M_g][  \bs H^1\bs M_g:\bs H^1] = [\bs J^1: \bs H^1 ]^{ 1/2}$ d'o\`u 
$$d_1^2 = \frac{ [\bs J^1: \bs H^1 ]}{ [\bs H^1 \bs J_g : \bs H^1\bs M_g][  \bs H^1\bs M_g:\bs H^1]^2} 
=\frac{ [\bs J^1: \bs H^1 ]}{[\bs J^1:  (\bs H^1\bs M_g)^\perp] [\bs H^1 \bs J_g : \bs H^1\bs M_g][ \bs  H^1\bs M_g:\bs H^1] } .$$ 
Comme $\bs H^1 \subset \bs H^1 \bs M_g \subset \bs H^1 \bs J_g 
\subset (\bs H^1\bs M_g)^\perp  \subset \bs J^1$ il reste 
$d_1^2 = [  (\bs H^1\bs M_g)^\perp: \bs H^1 \bs J_g  ]$.  \end{proof}

Le lemme suivant est une simple formalisation de la remarque  \cite[p. 135]{S3}. 

\begin{Lemma}\label{GammaPerp} 
Soit $\Gamma$ un groupe d'ordre $2$ ou $3$ agissant sur un espace symplectique 
$V$ sur $\mathbb F_p$ dont on note $<,>$ la forme,  fix\'ee par $\Gamma$. On suppose $p$ diff\'erent de $2$ et $3$. 
Pour tout sous-espace $X$ de $V$ stable sous l'action de $\Gamma$, 
l'orthogonal de $X^\Gamma$ dans $V^\Gamma$ est \'egal \`a 
$  (X^\perp)^\Gamma$. 
En particulier, le noyau de la restriction de la forme \`a $V^\Gamma$ est form\'e des  points fixes du noyau de la forme sous $\Gamma$. 
\end{Lemma}

\begin{proof}  
Le groupe $\Gamma$ est engendr\'e par un \'el\'ement $s$ d'ordre $2$ ou $3$. Le polyn\^ome caract\'eristique de $s$ agissant sur $V$ est  \'egal  \`a $X^2-1$ ou $X^3-1$ de sorte que 
$V$ se d\'ecompose en somme directe des deux sous-espaces $V_1$, sous-espace propre de $s$ pour la valeur propre $1$, et $V_s$, noyau de $s+1$ ou de $s^2+s+1$ selon le cas. 
Ces sous-espaces sont orthogonaux pour la forme symplectique : si $x \in V_1$ et 
$y \in V_s$ alors 
$$
\aligned 
\text{si } s^2=1  \qquad  2<x,y> &= <x, y > + <x, sy> =0 ; 
\\
 \text{si } s^3=1  \qquad   3<x,y> &= <x, y > + <x, sy> + <x, s^2,y> =0 . 
\endaligned 
$$
Le sous-espace $X$ se d\'ecompose de m\^eme en $X = X_1 \overset \perp \oplus X_s$  avec $X_1 = X^\Gamma$. Son orthogonal dans $V$ est la somme directe de l'orthogonal de 
$X_1$ dans $V_1$ et de l'orthogonal de $X_s$ dans $V_s$ d'o\`u l'assertion. 
\end{proof}

\begin{Corollary} 
Soit $\Gamma$ un groupe d'ordre $2$ ou $3$ agissant sur le groupe ambiant $\bs G$ en stabilisant les groupes $\bs H^1$ et $\bs J^1$ et le caract\`ere $\bs \theta$. 
Soit $g \in G^\Gamma$ centralisant  $\beta$. 
Soit $\bs H^{1 \Gamma}$ et $\bs J^{1 \Gamma}$ les groupes de points fixes de $\Gamma$ dans $\bs H^1$ et $\bs J^1$ respectivement, soit  $\bs \theta^\Gamma$ la restriction de $\bs \theta$ \`a $\bs H^{1 \Gamma}$ 
et soit $\bs \eta^\Gamma$ la repr\'esentation irr\'eductible de 
$\bs J^{1 \Gamma}$ contenant $\bs \theta^\Gamma$.  
\begin{enumerate}
	\item Les sous-groupes de points fixes de $\Gamma$ : $\bs J_g^\Gamma$, $\bs M_g^\Gamma$, $\bs L_g^\Gamma$, $\bs H_g^\Gamma$ et $\bs K_g^\Gamma$,  sont exactement les groupes obtenus par les d\'efinitions du d\'ebut de ce paragraphe   \`a partir de $\bs H^{1 \Gamma}$ et $\bs J^{1 \Gamma}$. 
	\item Si $\dim \Hom_{\bs J_g}(\bs \eta, \bs \eta^g) = 1$, alors 
	$\dim \Hom_{\bs J_g^\Gamma}(\bs \eta^\Gamma, \bs \eta^{\Gamma g}) = 1$. 
\end{enumerate}
\end{Corollary}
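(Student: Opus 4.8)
The plan is to combine Proposition \ref{essentiel} (applied first to $\bs G$ and then to the fixed-point group $\bs G^\Gamma$) with Lemma \ref{GammaPerp}, which controls how the symplectic orthogonality behaves under taking $\Gamma$-fixed points. Part (i) is essentially bookkeeping; part (ii) is where the real content lies.

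For part (i), I would argue that each of the groups $\bs J_g$, $\bs H_g$, $\bs K_g$ is built from $\bs H^1$, $\bs J^1$ and conjugation by $g$ using only the operations of intersection and (for $\bs L_g$) of product, and that each of these operations is compatible with taking $\Gamma$-fixed points in the present situation. For intersections this is immediate; for the product $\bs L_g = (\bs H^1 \cap g^{-1}\bs J^1 g)(\bs J^1 \cap g^{-1}\bs H^1 g)$ one invokes \cite[Theorem 2.3]{S2} (the Glauberman-type lemma for $\Gamma$ of order prime to $p$ acting on a pro-$p$-group, used exactly as in the proof of Lemma \ref{betagamma} and Proposition \ref{entrelacement}), after checking that $g \in \bs G^\Gamma$ commutes with $\beta$ so that both factors and their relevant normalizer are $\Gamma$-stable. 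For $\bs M_g$, which is defined as the kernel of the alternating form $(x,y)\mapsto \bs\theta([x,y])$ on $\bs J_g/\bs H_g$, one uses that this form is $\Gamma$-invariant (because $\bs\theta$ is $\Gamma$-fixed), so that by the last sentence of Lemma \ref{GammaPerp} applied to the symplectic $\mathbb F_p$-space $\bs J_g/\bs H_g$ with $X$ its radical, the radical of the restricted form on $(\bs J_g/\bs H_g)^\Gamma = \bs J_g^\Gamma/\bs H_g^\Gamma$ is exactly $(\bs M_g/\bs H_g)^\Gamma$. This identifies $\bs M_g^\Gamma$ with the ``$\bs M_g$'' computed from $\bs H^{1\Gamma}$ and $\bs J^{1\Gamma}$.

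For part (ii), I would assume $\dim \Hom_{\bs J_g}(\bs\eta,\bs\eta^g)=1$ and deduce the three conditions of Proposition \ref{essentiel} for the data $(\bs H^1,\bs J^1,\bs\theta)$, then transport each of them to $(\bs H^{1\Gamma},\bs J^{1\Gamma},\bs\theta^\Gamma)$ and conclude by the converse direction of Proposition \ref{essentiel}. Condition (1), $\bs M_g=\bs L_g$, passes to fixed points trivially: $\bs M_g^\Gamma = \bs L_g^\Gamma$ by part (i). For conditions (2) and (3), which assert $(\bs H^1\bs M_g)^\perp = \bs H^1\bs J_g$ and its $g$-conjugate, one works inside the symplectic $\mathbb F_p$-space $\bs J^1/\bs H^1$ (resp.\ $g^{-1}\bs J^1g/g^{-1}\bs H^1g$), whose form is $\Gamma$-invariant; the subspace $X = \bs H^1\bs M_g/\bs H^1$ is $\Gamma$-stable (by part (i)), and Lemma \ref{GammaPerp} gives $(X^\perp)^\Gamma = (X^\Gamma)^{\perp_{\Gamma}}$, the orthogonal being taken in the fixed-point symplectic space $\bs J^{1\Gamma}/\bs H^{1\Gamma}$. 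Since $X^\perp = \bs H^1\bs J_g/\bs H^1$ by hypothesis, taking $\Gamma$-fixed points and using part (i) once more yields $(X^\Gamma)^{\perp_\Gamma} = \bs H^{1\Gamma}\bs J_g^\Gamma/\bs H^{1\Gamma}$, which is precisely condition (2) for the fixed-point data. Condition (3) is identical after conjugating by $g$. Hence all three conditions hold, and Proposition \ref{essentiel} gives $\dim \Hom_{\bs J_g^\Gamma}(\bs\eta^\Gamma,\bs\eta^{\Gamma g})=1$.

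The main obstacle is the verification that $\bs J^{1\Gamma}/\bs H^{1\Gamma}$ is genuinely the fixed-point symplectic space of $\bs J^1/\bs H^1$ — i.e.\ that taking $\Gamma$-fixed points commutes with forming the quotient $\bs J^1/\bs H^1$ — so that Lemma \ref{GammaPerp} applies verbatim. This is the same point as the surjectivity of $\bs J^{1\Gamma} \to (\bs J^1/\bs H^1)^\Gamma$, which follows because $\bs J^1/\bs H^1$ is a finite abelian $p$-group with $\Gamma$ of order prime to $p$ (so $H^1(\Gamma, \bs H^1/\text{relevant}) $ vanishes), or equivalently from the Schur–Zassenhaus / averaging argument already implicit in \cite[\S 2]{S3}; once this is granted, everything else is a matter of threading the two propositions together.
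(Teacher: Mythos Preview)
Your proposal is correct and follows essentially the same approach as the paper: part (i) is handled via $g\in\bs G^\Gamma$ for the intersection-type groups, \cite[Theorem 2.3]{S2} for $\bs L_g$, and Lemma \ref{GammaPerp} applied to $\bs J_g/\bs H_g$ for $\bs M_g$; part (ii) then transports the three conditions of Proposition \ref{essentiel} to the fixed-point data via Lemma \ref{GammaPerp}. Your explicit remark that one needs $(\bs J^1/\bs H^1)^\Gamma = \bs J^{1\Gamma}/\bs H^{1\Gamma}$ (and similarly for the other quotients) before invoking Lemma \ref{GammaPerp} is a point the paper leaves implicit, but your justification via the prime-to-$p$ action on a $p$-group is exactly the right one.
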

\begin{proof} 
La premi\`ere assertion, pour  $\bs J_g^\Gamma$,    $\bs H_g^\Gamma$ et $\bs K_g^\Gamma$, d\'ecoule de $g \in G^\Gamma$. Pour $\bs L_g^\Gamma$ 
elle 
r\'esulte de l'application 
du th\'eor\`eme 2.3 de \cite{S2}.
Enfin, pour  $\bs M_g^\Gamma$, elle provient de l'application du   lemme \ref{GammaPerp} \`a l'espace symplectique $\bs J_g/\bs H_g$.

Utilisons maintenant la proposition \ref{essentiel}. 
L'hypoth\`ese de la deuxi\`eme assertion implique 
    $  \bs M_g =  \bs L_g$, d'o\`u   
$\bs M_g^\Gamma = \bs L_g^\Gamma$   ; 
    $(\bs H^1\bs M_g)^\perp =  \bs H^1 \bs J_g $ donc 
par le lemme \ref{GammaPerp}   
$(  \bs H^{1 \Gamma}  \bs M_g^\Gamma )^\perp = 
  \bs H^{1 \Gamma}  \bs J_g^\Gamma   $~; 
  $ ( (g^{-1} \bs H^1g) \bs M_g)^\perp =  (g^{-1} \bs H^1g ) \bs J_g$ donc  
  $( (g^{-1}  \bs H^{1 \Gamma} g) \bs M_g^\Gamma)^\perp =  (g^{-1}  \bs H^{1 \Gamma}g )   \bs J_g^\Gamma$ ;  
d'o\`u le r\'esultat. 
\end{proof}

Il n'y a plus qu'\`a conclure. La propri\'et\'e d'entrelacement voulue,  
 $\dim \Hom_{\bs J_g}(\bs \eta, \bs \eta^g) = 1$, est valide   pour les caract\`eres semi-simples de $GL(n,F)$ lui-m\^eme : la d\'emonstration  de  
\cite[Proposition 3.31]{S4}   s'applique mot pour mot dans ce cas.  
Le corollaire ci-dessus  nous permet de propager cette propri\'et\'e   de 
$GL(n,F)$ \`a un groupe classique sur $F$ d\'efini comme groupe de points fixes d'une involution (si $p \ne 2$). Le cas de $SO(8,F) $ s'obtient  \`a partir de $O(8,F)$ puisque les sous-groupes   consid\'er\'es sont des pro-$p$-groupes,   contenus dans $SO(8,F) $. Enfin on passe de $SO(8,F)$ \`a $G_2(F)$ (si $p \ne 2,3$) via la trialit\'e et le corollaire \`a nouveau.

\setcounter{section}{3}
\Section{Repr\'esentations supercuspidales de $\G2 (F)$}\label{beta}

Soient $[\L, n, 0, \beta]$ une strate semi-simple non nulle de $\mathfrak g_2(F)$ au sens du paragraphe \ref{strates.2}, $\bar \theta$ un caract\`ere semi-simple de $\bar H^1(\beta,\Lambda)$ et $\bar \eta$ son extension de Heisenberg \`a $\bar J^1(\beta,\Lambda)$ (prop. \ref{eta}). On d\'efinit le groupe $\bar J(\beta, \L)$ sur lequel vivent les types semi-simples construits \`a partir de ces donn\'ees puis on s\'electionne des prolongements de $\bar \eta$ \`a ce groupe (\S \ref{types.beta}) \`a partir desquels on construit  des repr\'esentations supercuspidales  de $\G2(F)$ dans les cas o\`u les sous-groupes  $\bar P(\L)$ et $\bar G_\beta$ satisfont certaines conditions (\S \ref{types.cuspidales}). La m\'ethode est une variation sur celle de S. Stevens \cite{S5}.

\subsection{$\beta$-extensions}\label{types.beta}

Pour tout sous-groupe $H$ de $\SO_F(V)$ on note $H'$ son intersection avec le groupe orthogonal
 r\'eduit $G'=\O'_F(V)$. 
Soit $P^\prime_\beta(\L)= P^\prime(\L)\cap G_\beta$ et $P^i_\beta(\L)= P^i(\L)\cap G_\beta$, $i \ge 1$. 
Le sous-groupe   $\pi^{-1}(P^\prime( \L))$  de $\Spin_F (V) $ est stable par trialit\'e (\S \ref{par225}) ainsi que son intersection $\pi^{-1}(P^\prime_\beta( \L))$ avec le centralisateur de $\b$. 
 
On pose alors  $\bar P( \L)= \left(\pi^{-1}(P^\prime( \L))\right)^\Gamma $ ; c'est un sous-groupe ouvert compact maximal   de 
$\G2(F)$, le fixateur du point de l'immeuble attach\'e \`a $\L$. On pose enfin $\bar P_\beta( \L)= \bar P( \L) \cap G_\beta $.

 \begin{Lemma}\label{trialite2}
\begin{enumerate}
\item  
On a $J^\prime(\beta, \L)= P^\prime_\beta(\L) J^1(\beta, \L)$. Son image inverse  $\pi^{-1}(J^\prime(\beta, \L))$  dans $\Spin_F (V) $ est stable par trialit\'e. 

\item
Soit  $\bar J(\beta, \L)= \left(\pi^{-1}(J^\prime(\beta, \L))\right)^\Gamma $. On a   
$\bar J(\beta, \L) = \bar P_\beta(\L) \bar J^1(\beta, \L)$, en particulier :  $\bar J(\beta, \L) / \bar J^1(\beta, \L) \simeq 
 \bar P_\beta(\L) / \bar P^1_\beta(\L)$. 
\end{enumerate}
\end{Lemma}

 \begin{proof} 
 (i) 
  La d\'ecomposition correspondante dans $\widetilde G$ est connue \cite[\S 3]{S4}, celle dans $G$ aussi (elle r\'esulte de l'application de \cite[\S2]{S2} comme au paragraphe \ref{par31}), or  $J^1(\beta, \L)$  est contenu dans $G^\prime$. 
 
 (ii)  Voir   la preuve de  la Proposition \ref{entrelacement}.
 \end{proof}

\begin{Proposition}\label{kappa} Il existe des repr\'esentations $\bar \kappa$ de $\bar J(\beta, \L)$ prolongeant $\bar \eta$ et dont la res\-triction \`a un pro-$p$-sous-groupe de Sylow de $\bar J(\beta, \L)$ poss\`ede le m\^eme entrelacement que $\bar \eta$. \\
Deux telles repr\'esentations diff\`erent d'une torsion par un caract\`ere de $\bar P_{\beta}(\Lambda)/\bar P_{\beta}^1(\Lambda)$ trivial sur  tous ses sous-groupes unipotents.
\end{Proposition}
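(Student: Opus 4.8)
Le plan est de construire $\bar\kappa$ par le m\^eme proc\'ed\'e de descente \`a deux \'etapes que pour les propositions \ref{entrelacement} et \ref{eta}~: partir des $\beta$-extensions connues dans le groupe lin\'eaire $\widetilde G=\GL_F(V)$, redescendre \`a $\O'_F(V)$ par la correspondance de Glauberman pour l'involution $\tau$ (c'est l\`a essentiellement le cas des groupes classiques, trait\'e dans \cite[\S 4]{S5}), puis \`a $\G2(F)$ par la correspondance de Glauberman pour le groupe de trialit\'e $\Gamma$ agissant sur $\Spin_F(V)$. On n'invoquera jamais l'alg\`ebre de Lie $\lg2(F)$~: c'est pr\'ecis\'ement parce que la transform\'ee de Cayley ne l'envoie pas dans $\G2(F)$ (remarque \ref{Moy}) qu'une construction directe de $\beta$-extension dans $\G2(F)$ \`a la \cite[\S 5]{BK1} est exclue, et qu'il faut h\'eriter les propri\'et\'es voulues de $\widetilde G$ via Glauberman, en utilisant seulement la stabilit\'e par trialit\'e des groupes en jeu (proposition \ref{diagcom}, lemmes \ref{ex2.1} et \ref{trialite2}) et le fait que la correspondance de Glauberman conserve l'entrelacement (cf. la preuve de \ref{entrelacement}).

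Premi\`ere \'etape. Dans $\widetilde G$, les repr\'esentations de $\widetilde J(\beta,\L)$ prolongeant $\widetilde\eta$ dont la restriction au pro-$p$-Sylow a m\^eme entrelacement que $\widetilde\eta$ --- les $\beta$-extensions --- existent (\cite[\S 5]{BK1}), et deux quelconques diff\`erent d'une torsion par un caract\`ere de $\widetilde J(\beta,\L)/\widetilde J^1(\beta,\L)$ trivial sur les sous-groupes unipotents de ce quotient r\'eductif fini. Comme $\widetilde\eta$ est l'unique repr\'esentation irr\'eductible de $\widetilde J^1(\beta,\L)$ contenant $\widetilde\theta$, elle est $\tau$-invariante, l'ensemble de ses $\beta$-extensions est donc un espace principal homog\`ene sous ce groupe fini ab\'elien de caract\`eres muni d'une action compatible de $\langle\tau\rangle$, et l'on en extrait une $\beta$-extension $\tau$-invariante $\widetilde\kappa$ comme dans \cite[\S 4]{S5}. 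La correspondance de Glauberman pour $\langle\tau\rangle$ lui associe alors une repr\'esentation irr\'eductible $\kappa$ de $J(\beta,\L)$ prolongeant $\eta$, dont la restriction au pro-$p$-Sylow conserve l'entrelacement.

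Deuxi\`eme \'etape. On rel\`eve la restriction de $\kappa$ \`a $J'(\beta,\L)$ en la repr\'esentation $\kappa\circ\pi$ de $\pi^{-1}(J'(\beta,\L))$, sous-groupe de $\Spin_F(V)$ stable par trialit\'e (lemme \ref{trialite2}(i))~; $\eta$ \'etant $\Gamma$-invariante (proposition \ref{eta}), l'ensemble des prolongements \`a $\pi^{-1}(J'(\beta,\L))$ de (l'image r\'eciproque de) $\eta$ ayant la bonne propri\'et\'e d'entrelacement est de nouveau un torseur sous un groupe fini ab\'elien de caract\`eres, muni d'une action compatible de $\Gamma\simeq\mathfrak S_3$~; on choisit $\kappa$ de sorte que $\kappa\circ\pi$ soit $\Gamma$-invariante. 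La correspondance de Glauberman pour $\Gamma$, groupe r\'esoluble d'ordre $6$ premier \`a $p$ agissant sur $\pi^{-1}(J'(\beta,\L))$ (\cite[\S 2]{S2}, \cite[\S 2]{S3}), lui associe alors une repr\'esentation irr\'eductible $\bar\kappa$ de $\pi^{-1}(J'(\beta,\L))^\Gamma=\bar J(\beta,\L)$ (lemme \ref{trialite2}(ii)), prolongeant $\bar\eta$ et dont la restriction \`a un pro-$p$-Sylow de $\bar J(\beta,\L)$ a m\^eme entrelacement que $\bar\eta$ (conservation de l'entrelacement par Glauberman, comme \`a la premi\`ere \'etape).

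Pour l'unicit\'e, deux telles repr\'esentations prolongeant $\bar\eta$ diff\`erent d'une torsion par un caract\`ere $\chi$ de $\bar J(\beta,\L)/\bar J^1(\beta,\L)\simeq\bar P_{\beta}(\L)/\bar P^1_{\beta}(\L)$ (lemme \ref{trialite2}(ii))~; la condition sur le pro-$p$-Sylow entra\^ine que $\chi$ est trivial sur tout sous-groupe unipotent $U$ de ce quotient r\'eductif, car $\bar P^1_{\beta}(\L)\,U$ entrelace $\bar\eta$ et l'\'egalit\'e des entrelacements des restrictions impose $\chi_{|U}=1$, argument purement au niveau du quotient r\'eductif fini identique \`a celui de \cite[\S 5.2]{BK1}. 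Le point le plus d\'elicat \`a r\'ediger sera le choix, successivement $\tau$- puis $\Gamma$-invariant, de la $\beta$-extension~: il faudra v\'erifier que l'ensemble des $\beta$-extensions porte une action affine du groupe fini consid\'er\'e compatible \`a l'action lin\'eaire sur le groupe de caract\`eres sous-jacent --- ce qui d\'ecoule de l'unicit\'e \`a \'equivalence pr\`es de $\widetilde\eta$, resp. de $\eta$ --- et que cette action admet un point fixe, en se ramenant au besoin, comme dans \cite[\S 4]{S5}, aux $\beta$-extensions compatibles au transfert vers une cha\^ine de r\'eseaux maximale, afin de r\'eduire la taille du groupe de caract\`eres en jeu.
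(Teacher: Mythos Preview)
Your plan has a genuine gap: the Glauberman correspondence you invoke (\cite[\S 2]{S3}) is only available when the group acted upon is a pro-$p$-group. You apply it first to a $\tau$-invariant $\beta$-extension $\widetilde\kappa$ living on $\widetilde J(\beta,\L)$, then (via $\Spin$) to a $\Gamma$-invariant $\kappa\circ\pi$ on $\pi^{-1}(J'(\beta,\L))$. Neither group is a pro-$p$-group: the quotient by $\widetilde J^1(\beta,\L)$, resp.\ by $s(J^1(\beta,\L))$, is a finite reductive group over the residue field, whose order is in general divisible by $2$ and $3$. The hypothesis ``$|\Gamma|$ premier \`a $p$'' is not the right one --- one needs $|\Gamma|$ coprime to the order of the group on which $\Gamma$ acts, and that fails here. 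Incidentally, this is not how \cite[\S 4]{S5} proceeds either: Stevens never descends a full $\beta$-extension from $\GL$ by Glauberman.

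The paper's route is precisely designed to circumvent this. One first identifies a pro-$p$-Sylow of $\bar J(\beta,\L)$ of the shape $\bar J^1(\beta;\L^m,\L):=\bar P^1_\beta(\L^m)\,\bar J^1(\beta,\L)$ for a suitable lattice sequence $\L^m$ corresponding to a point of the immeuble de $\G2(F)$ --- this is where the building embeddings of \S\ref{strates.normes} and the crucial Lemma \ref{lemme28} enter, producing $\L^m$ with $\widetilde{\mathfrak A}_0(\L^m)\subset\widetilde{\mathfrak A}_0(\L)$. One then constructs the extension $\bar\eta_s$ of $\bar\eta$ to this pro-$p$-Sylow by exactly the two-step Glauberman descent you describe, but applied to $\widetilde\eta_s$ on the \emph{pro-$p$-group} $\widetilde J^1(\beta;\L^m,\L)$ (using \cite[Proposition 3.12]{S5} and the uniqueness characterisation via induction to $\widetilde P^1(\L^m)$), where Glauberman is legitimate. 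The passage from the pro-$p$-Sylow $\bar J^1(\beta;\L^m,\L)$ to $\bar J(\beta,\L)$ is then a separate argument about extending characters of the reductive quotient $\bar P_\beta(\L)/\bar P^1_\beta(\L)$, as in \cite[Lemma 3.9, Theorem 4.1]{S5}. Your uniqueness paragraph is essentially correct, but it belongs after this construction rather than as a substitute for it.
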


\begin{proof}  Commen\c cons par d\'ecrire un pro-$p$-sous-groupe de Sylow de $\bar J(\beta, \L)$. Gr\^ace \`a (\ref{trialite2}(ii)), il suffit de d\'eterminer un $p$-sous-groupe de Sylow de $\oP_{\beta}(\Lambda)/\oP_{\beta}^1(\Lambda)$.

On reprend la classification des strates semi-simples d\'ecrite au d\'ebut du paragraphe \ref{strates.4}. Selon les diff\'erents cas, on note :
\begin{enumerate}
\item []dans le cas (\ref{planhyperbolique}),  $V'=W^+$ et $H=\SL (V')$ ; \quad dans le cas (\ref{scalairegl2}), $V'=W_{\lambda}$ et $H=\GL(V')$~;
\item[]dans le cas (\ref{extensionquadratique}), $V'=W$ et $H=\SU (V')$ ; \quad dans le cas (\ref{scalairesu}), $V'=W$ et $H=\U(V')$.
\end{enumerate}
Dans chaque cas, $(\Lambda_{V'}, n,0, \beta_{V'})$ est une strate semi-simple dans l'alg\`ebre de Lie de $H$ et les lemmes \ref{dim2} et \ref{dim4} permettent d'affirmer que la restriction de $V$ \`a $V'$ induit un isomorphisme de $\bar P_{\beta}(\Lambda)/\bar P^1_{\beta}(\Lambda)$ sur $\tP_{\beta_{V'}}(\Lambda_{V'})\cap H/ \tP^1_{\beta_{V'}}(\Lambda_{V'})\cap H$.\\
Le corollaire 2.9 de \cite{S5} assure l'existence d'une suite de r\'eseaux $\Lambda^{'m}$ dans $V'$ normalis\'ee par $F[\beta_{V'}]^\times$, autoduale si $H$ est unitaire, telle que 
  $\tP_{\beta_{V'}}^1(\Lambda^{'m}_{V'})\cap H/\tP_{\beta_{V'}}^1(\Lambda_{V'})\cap H$ 
 soit  
 un $p$-sous-groupe de Sylow de $\tP_{\beta_{V'}}(\Lambda_{V'})\cap H/\tP^1_{\beta_{V'}}(\Lambda_{V'})\cap H$. 
Quitte \`a translater ou doubler $\Lambda^{'m}$ (cas (\ref{planhyperbolique}) ou cas (\ref{scalairesu})), $\Lambda^{'m}$ se prolonge en une suite de r\'eseaux $\Lambda^m$ dans $V$ correspondant \`a un point de l'immeuble de $\oG$ et normalis\'ee par $F[\beta]^\times$ (prop. \ref{volume0sl} \`a \ref{dimension4}). Alors :
  $$\oP_{\beta}^1(\Lambda)\subset \oP_{\beta}^1(\Lambda^m)\subset \oP_{\beta}(\Lambda^m)\subset \oP_{\beta}(\Lambda) $$
et $\oP_{\beta}^1(\Lambda^m)/\oP_{\beta}^1(\Lambda)$ est un $p$-sous-groupe de Sylow de $\oP_{\beta}(\Lambda)/\oP_{\beta}^1(\Lambda)$. D'apr\`es le lemme  \ref{lemme28}, nous pouvons choisir $\Lambda^m$ pour qu'en outre, $\widetilde{\mathfrak A}_0(\L^m)$ soit inclus dans $\widetilde{\mathfrak A}_0(\L)$. Nous choisissons un tel $\Lambda^m$ et nous posons $\oJ^1(\beta ;\Lambda^m ,\Lambda):=\oP^1_{\beta}(\Lambda^m)\oJ^1(\beta,\Lambda)$. C'est un pro-$p$-sous-groupe de Sylow de $\oJ(\beta,\Lambda)$.

\smallskip

 Construisons d'abord un prolongement $\bar\eta_{s}$ de $\bar \eta$ \`a   $\oJ^1(\beta ;\Lambda^m ,\Lambda)$ qui poss\`ede le m\^eme entrelacement que $\bar \eta$. Etant donn\'e que la strate consid\'er\'ee est autoduale, il convient de ``remonter'' jusqu'\`a $\tilde G$ afin d'\'etendre les r\'esultats de la proposition 3.7 de \cite{S5}.

On reprend les notations \ref{theta}. On pose :
$\bar \theta_{m}=\tau_{\Lambda,\Lambda^m,\beta}(\bar \theta)$. Alors : $\theta_{m}=\tau_{\Lambda,\Lambda^m,\beta}(\theta)$ et $\tilde\theta_{m}=\tau_{\Lambda,\Lambda^m,\beta}(\tilde\theta)$.
On note $\eta_{m}$, $\tilde \eta$ et $\tilde \eta_{m}$ les repr\'esentations irr\'eductibles de $J^1(\beta,\Lambda^m)$, $\tJ^1(\beta,\Lambda)$ et $\tJ^1(\beta,\Lambda^m)$ contenant $\theta_{m}$, $\tilde\theta$ et $\tilde\theta_{m}$ respectivement. 

Comme $ \widetilde{\mathfrak A}_0(\L^m)\subset \widetilde{\mathfrak A}_0(\L)$, la proposition 3.12  de \cite{S5} assure l'existence d'une unique repr\'esen\-ta\-tion irr\'eductible $\tilde \eta_{s}$ du groupe  $\tJ^1(\beta ; \Lambda^m,\Lambda):=\tP^1_{\beta}(\Lambda^m)\tJ^1(\beta,\Lambda)$ prolongeant $\tilde \eta$ et dont l'induite \`a $\tP^1(\Lambda^m)$ est \'equivalente \`a l'induite de $\tilde\eta_{m}$ \`a ce m\^eme groupe. 

Puisque $\tilde \theta$ est $\tau$-invariant, il en va de m\^eme de $\tilde \theta_{m}$ ($\tau$ commute au transfert), puis de $\tilde \eta$ et $\tilde \eta_{m}$ et par suite, de $\tilde \eta_{s}$. On note $\eta_{s}$ l'image de $\tilde \eta_{s}$ par la correspondance de Glauberman appliqu\'ee \`a l'action de $\tau$ sur $\tilde J^1(\beta ; \Lambda^m,\Lambda)$. C'est une repr\'esentation irr\'eductible de $J^1(\beta;\Lambda^m,\Lambda):=\tJ^1(\beta; \Lambda^m,\Lambda)^\tau$. En fait, $\tP^1(\Lambda^m)\cap \tJ^1(\beta,\Lambda)$ est un pro-$p$-groupe avec $p\neq 2$ et $J^1(\beta;\Lambda^m,\Lambda)$ n'est autre que $P^1(\Lambda^m)J^1(\beta,\Lambda)$. 

La repr\'esentation $\eta_{s}$ est un prolongement de $\eta$ dont l'induite \`a $P^1(\Lambda^m)$ est \'equivalente \`a l'induite de $\eta_{m}$ \`a ce m\^eme groupe \cite[Theorem 2.2]{S3}. Ces propri\'et\'es d\'efinissent $\eta_{s}$ uniquement (m\^eme argument que \cite[Proposition 5.1.14]{BK1}). 

Comme pr\'ec\'edemment, puisque $\theta$ et $\theta_{m}$ sont fixes par trialit\'e, $\eta_{s}$ est aussi fixe par trialit\'e. On d\'efinit $\bar \eta_{s}$ comme l'image de $\eta_{s}$ par la correspondance de Glauberman appliqu\'ee \`a l'action de $\Gamma$ sur $ J^1(\beta;\Lambda^m,\Lambda)$. Les m\^emes arguments que pr\'ec\'edemment ($p\neq 2,3$) montrent que $\bar \eta_{s}$ est l'unique prolongement de $\bar \eta$ \`a $\bar J^1(\beta;\Lambda^m,\Lambda)$ dont l'induite \`a $\oP^1(\Lambda^m)$ est \'equivalente \`a l'induite de $\bar\eta_{m}$ \`a ce m\^eme groupe. 

Connaissant l'entrelacement de $\bar \eta_{m}$ (proposition \ref{eta}) et  sachant par la d\'emonstration de la proposition \ref{entrelacement} que pour tout \'el\'ement $g$ de $\oG_{\beta}$,
$$\oJ^1(\beta,\Lambda)g\oJ^1(\beta,\Lambda)\cap\oG_{\beta} =\oP^1_{\beta}(\Lambda)g\oP^1_{\beta}(\Lambda),$$
donc que $\oJ^1(\beta;\Lambda^m,\Lambda)g\oJ^1(\beta; \Lambda^m,\Lambda)\cap\oG_{\beta} = \oP^1_{\beta}(\Lambda^m)g\oP^1_{\beta}(\Lambda^m)$, l'argument de \cite[Proposition 5.1.19]{BK1} montre que la repr\'esentation $\bar \eta_{s}$ est entrelac\'ee par tous les \'el\'ements de $\oG_{\beta}$.

\smallskip	

Il reste \`a prolonger $\bar \eta_{s}$ en une repr\'esentation de $\bar J(\beta,\Lambda)$. En reprenant le raisonnement de \cite[Theorem 4.1]{S5}, il suffit de montrer que tout caract\`ere $\varphi$ de $\oP^1_{\beta}(\Lambda^m)/\oP^1_{\beta}(\Lambda)$ qui est entrelac\'e par $\oP_{\beta}(\Lambda)$ se prolonge en un caract\`ere de ce groupe.

Or $\oP^1_{\beta}(\Lambda^m)/\oP^1_{\beta}(\Lambda)$ s'identifie \`a $\oP^1_{\beta_{V'}}(\Lambda^{'m})/\oP^1_{\beta_{V'}}(\Lambda_{V'})$ qui, \`a son tour, s'identifie \`a un produit de groupes r\'eductifs sur des corps finis de caract\'eristique diff\'erente de 2 et 3. On conclut en suivant la d\'emonstration de \cite[Lemma 3.9]{S5}.
\end{proof}

\subsection{Repr\'esentations supercuspidales}\label{types.cuspidales} 

Soit $[\L, n, 0, \beta]$ une strate semi-simple  de $\mathfrak g_2(F)$. Si elle est nulle, on note $\bar \kappa$ la repr\'esentation triviale de $\bar J(\beta,\Lambda)$ ; sinon, on consid\`ere un caract\`ere semi-simple $\bar \theta$ de $\bar H^1(\beta,\Lambda)$, $\bar \eta$ son extension de Heisenberg \`a $\bar J^1(\beta,\Lambda)$ et $\bar \kappa$ un prolongement de $\bar \eta$ \`a $\bar J(\beta, \L)$ construit au paragraphe pr\'ec\'edent. 

Soit $\bar \rho$ une repr\'esentation cuspidale de $\bar J(\beta,\L) / \bar J^1\beta,\L)\simeq \oP_{\beta}(\Lambda)/\oP^1_{\beta}(\Lambda)$. Notons $\oP^\circ _{\beta}(\Lambda)$ la composante connexe de $\oP_{\beta}(\Lambda)$ et  $\bar J^\circ(\beta,\Lambda):=\oP^\circ _{\beta}(\Lambda)\oJ^1(\beta,\Lambda)$. La restriction de $\bar \rho$ \`a $  \oP^\circ_\beta(\Lambda)/\oP^1_{\beta}(\Lambda)$ contient donc une repr\'esentation irr\'eductible cuspidale $\bar \rho^\circ$. On note encore $\bar \rho$ et $\bar \rho^\circ$ les rel\`evements de 
$\bar \rho$ et $\bar \rho^\circ$ \`a $\bar J(\beta,\L)$ et $\bar J^\circ(\beta,\Lambda)$ respectivement.

Soit  $\bar \lambda$ la repr\'esentation $\bar \kappa \otimes \bar \rho$ de $\bar J(\beta, \L)$. La paire $(\oJ(\beta,\Lambda), \bar\lambda)$ ainsi obtenue est un {\it type semi-simple} pour $\oG$.

\begin{Theorem}\label{cuspidales}
Soit $(\oJ(\beta,\Lambda),\bar \lambda)$ un  type semi-simple pour $\oG$.  Si $\bar P^\circ_\beta(\L)$ est un sous-groupe parahorique maximal de $\bar G_\beta$ dont le normalisateur dans $\oG_{\beta}$ est $\oP_{\beta}(\Lambda)$ alors la repr\'esentation induite  $\cInd_{\bar J(\beta, \L)}^{\bar G} \bar \lambda $ est une repr\'esentation irr\'eductible supercuspidale de $\oG$. 
\end{Theorem}

\begin{proof} Il s'agit de calculer l'entrelacement de  $\bar \kappa \otimes \bar \rho$, calcul analogue \`a la d\'emons\-tra\-tion de la proposition \cite[6.18]{S5}. Si $g$ est un \'el\'ement de $\oG$ qui entrelace $\bar \kappa \otimes \bar \rho$, le calcul de l'entrelacement de $\bar \eta$ permet de consid\'erer que  $g$ appartient \`a $\oG_{\beta}$ et qu'il entrelace la repr\'esentation $\bar \kappa \otimes \bar \rho^\circ$. Puis, l'argument de \cite[Proposition 5.3.2]{BK1} dans lequel on utilise que l'entrelacement de la restriction de $\bar \kappa$ \`a $\oJ(\beta;\Lambda^m,\Lambda)$ est \'egal \`a celui de $\bar \eta$ montre que $g$ entrelace la restriction de $\bar \rho^\circ$ \`a $\oJ(\beta;\Lambda^m,\Lambda)$. Alors, de la proposition \cite[1.1]{S5} sachant que $\bar P^\circ_\beta(\L)$ est un sous-groupe parahorique maximal de $\bar G_\beta$, on d\'eduit que $g$ appartient au normalisateur dans $\oG_{\beta}$ de  $\bar P^\circ_\beta(\L)$, c'est-\`a-dire $\oP_{\beta}(\L
 )$. Ainsi l'entrelacement de $\bar \lambda$ est \'egal \`a $\oJ(\beta,\Lambda)$.
\end{proof}

 \begin{biblio}{}

\bibitem{BL} P. Broussous and B. Lemaire,  {\it Building  of $GL(m,D)$ and
centralizers}, Transformation Groups   \textbf{7}(1) (2002), 15--50.

\bibitem{BS} P. Broussous and S. Stevens,  {\it Buildings of classical groups and
centralizers of Lie algebra elements}, J. of Lie Theory \textbf{19} (2009), no. 1, 55--78.
 
\bibitem{BT0} F. Bruhat et J. Tits, {\it Sch\'emas en groupes et immeubles des groupes classiques sur un corps local, 1\`ere partie : le groupe lin\'eaire g\'en\'eral}, Bull. Soc. Math. France 
\textbf{112} (1984), 259--301. 

\bibitem{BT} F. Bruhat et J. Tits, {\it Sch\'emas en groupes et immeubles des groupes classiques sur un corps local, 2\`eme partie : groupes unitaires}, Bull. Soc. Math. France
\textbf{115} (1987), 141--195.

\bibitem{BK1} C. J. Bushnell and P. C. Kutzko, {\it 
The admissible dual of ${\rm GL}(N)$ via compact open subgroups}, Annals of Mathematics Studies  129,  Princeton University Press, Princeton, NJ, 1993.

\bibitem{BK2} C. J. Bushnell and P. C. Kutzko,  {\it Semi-simple types in $GL(n)$},  
 Compositio Math. \textbf{119} (1) (1999),  53--97.

\bibitem{D} J.-F. Dat, {\it Finitude pour les repr\'esentations lisses de groupes $p$-adiques},
  J. Inst. Math. Jussieu \textbf{8} (2) (2009),   261--333. 

\bibitem{GY} W. T. Gan and J.-K. Yu, {\it 
 Sch\'emas en groupes et immeubles des groupes exceptionnels sur un corps local. I. Le groupe $G_2$},  Bull. Soc. Math. France \textbf{131} (3) (2003),   307--358.

\bibitem{Kim} Ju-Lee Kim, {\it  Supercuspidal representations: an exhaustion theorem},  J. Amer. Math. Soc. \textbf{20} (2)(2007),   273--320. 

\bibitem{L} B. Lemaire, {\it  Comparison of lattice filtrations and Moy-Prasad filtrations for classical groups}, J. Lie Theory \textbf{19} (1) (2009),   29--54.

\bibitem{M} L. Morris, {\it Tamely ramified supercuspidal representations of classical groups. I. Filtrations } ,  Ann. Sci. Ecole Norm. Sup. (4) \textbf{24} (6) (1991),   705--738.

\bibitem{M2} L. Morris, {\it Level zero $\bf G$-types},  Compositio math.  \textbf{118} (2) (1999),   135--157.

\bibitem{OM} O. T. O'Meara, {\it    Introduction to quadratic forms},  Die Grundlehren der mathematischen Wissenschaften  117,  Springer-Verlag 1963.  

\bibitem{RS} S. Rallis and G. Schiffmann, {\it Theta correspondence associated to $G_2$},  Amer. J. Math. \textbf{111} (5) (1989),   801--849. 

\bibitem{SV} T. Springer and F. Veldkamp, {\it Octonions, Jordan algebras and exceptional groups},  Springer Monographs in Mathematics, Springer-Verlag  2000.

\bibitem{S2} S. Stevens, {\it Double coset decompositions and intertwining},
manuscripta math.  \textbf{106} (2001), 349--364.

\bibitem{S3} S. Stevens, {\it Intertwining and supercuspidal types for $p$-adic classical groups},
Proc. London Math. Soc. \textbf{83} (2001), 120--140.

\bibitem{S4} S. Stevens, {\it Semisimple characters for {$p$}-adic
classical groups}, Duke Math.\ J.\,  \textbf{127} (1) (2005), 123--173.

\bibitem{S5} S. Stevens, {\it The supercuspidal representations of {$p$}-adic classical groups},   Invent. Math. \textbf{172} (2) (2008),   289--352. 

\bibitem{W} J. S. Wilson, {\it Profinite groups}, London Mathematical Society Monographs  New Series, 19,  The Clarendon Press, Oxford University Press 1998. 

\bibitem{Yu} J.-K. Yu, {\it 
Construction of tame supercuspidal representations}, 
J. Amer. Math. Soc. \textbf{14} (3) (2001),  579--622. 

\end{biblio}

\bigskip

\small

Corinne Blondel\hfill Laure Blasco \break \indent
C.N.R.S. -  Groupes, repr\'esentations et g\'eom\'etrie,
\hfill  D{\'e}partement de Math{\'e}matiques
\break\indent
Institut de Math{\'e}matiques de Jussieu
\hfill  et U.M.R 8628 du C.N.R.S.\break\indent
Universit{\'e} Paris 7 - Case 7012\hfill Universit{\'e} Paris-Sud 11, B{\^a}timent 425 \break\indent
F-75205 Paris Cedex 13 \hfill  F-91405 Orsay Cedex \break\indent
France \hfill France \break\indent

\smallskip
blondel@math.jussieu.fr\hfill laure.blasco@math.u-psud.fr \break\indent
\end{document}